\def\var{\varepsilon}
\def\p{\partial}
\newtheorem{theo}{Theorem}[section]
\newtheorem{lemm}[theo]{Lemma}
\newtheorem{rema}[theo]{Remark}
\numberwithin{equation}{section}
\def\ep{\varepsilon}
\begin{document}
	\title{Global uniform regularity for the 3D incompressible
MHD equations with slip boundary condition near an equilibrium}
	\author{
		$\mbox{Jincheng Gao}^1$ \footnote{Email: gaojch5@mail.sysu.edu.cn}, \quad
        $\mbox{Jiahong Wu}^2$ \footnote{Email: jwu29@nd.edu}, \quad
		$\mbox{Zheng-an Yao}^1$ \footnote{Email: mcsyao@mail.sysu.edu.cn}\quad and\quad
		$\mbox{Xuan Yin}^1$ \footnote{Corresponding author. Email: yinx29@mail2.sysu.edu.cn}\\
		\quad
		$^1\mbox{School}$ of Mathematics, Sun Yat-sen University,\\
		Guangzhou 510275, China\\
$^2\mbox{Department}$ of Mathematics
University of Notre Dame, \\
Notre Dame 46556, USA\\
	}
	\date{}
	\maketitle
\vspace{-1em}
	\begin{abstract}
\begin{sloppypar}
This paper solves the global conormal regularity problem for the three-dimensional incompressible MHD equations with slip boundary condition near a background magnetic field. Motivated by applications in geophysics, the MHD system considered here is anisotropic with small vertical dissipation and small horizontal magnetic diffusion. By exploiting the enhanced dissipation due to the background magnetic field and introducing three layers of energy functionals, we are able to establish global-in-time uniform bounds that are independent of vertical viscosity and horizontal resistivity. These global  conormal regularity estimates allow us to pass to the limit and obtain the convergence to the MHD system with no vertical dissipation and horizontal magnetic diffusion. In the special case of the 3D incompressible Navier-Stokes, explicit long-time rates are also extracted in the zero vertical viscosity limit. %This result complements the important work of Xiao and Xin \cite{Xiao2007,Xiao2013}.
\end{sloppypar}

	\end{abstract}
	
%	\noindent \textit{Keywords}: MHD equations, Navier-Stokes equations,
%vanishing viscosity limit, global stability
	
	%\noindent \textit{Mathematics Subject Classification}: xxxx
	%
\vspace{-1em}

	\tableofcontents
	
	\section{Introduction}
	
	The magnetohydrodynamic (MHD) equations play a significant role in the study of many phenomena in geophysics, astrophysics, cosmology and engineering. They reflect the basic physics laws governing the motion of electrically conducting fluids such as plasmas, liquid metals and electrolytes. The fluid velocity obeys the Naiver-Stokes equations with Lorentz forcing while the magnetic field satisfies the Maxwell's equations of electromagnetism \cite{D1993,D2001,MagneticReconnection2000}. Mathematically the coupling and interaction in the MHD systems leads to much richer structures such as various wave phenomena than those in the Navier-Stokes counterparts.
This paper focuses on the following 3D incompressible MHD equations with anisotropic dissipation
	\begin{equation}\label{eq0}
		\left\{\begin{array}{*{4}{ll}}
			\partial_t u^\ep + u^\ep \cdot \nabla u^\ep -
\Delta_h u^\ep + \nabla p^\ep = \varepsilon \partial_3^2 u^\ep
 + b^\ep \cdot \nabla b^\ep &{\rm in} ~~ \mathbb{R}_+^3,\\
 \nabla \cdot u^\ep = 0 \quad & {\rm in} ~~\mathbb{R}_+^3,\\
			\partial_t b^\ep + u^\ep \cdot \nabla b^\ep - \partial_3^2 b^\ep
= \varepsilon \Delta_h b^\ep + b^\ep\cdot \nabla u^\ep &{\rm in} ~~ \mathbb{R}_+^3,\\
 \nabla \cdot b^\ep= 0 \quad & {\rm in} ~~ \mathbb{R}_+^3.
		\end{array}\right.
	\end{equation}
where $\mathbb{R}_+^3=\{ (x_1,x_2,x_3) \in \mathbb{R} \times \mathbb{R} \times \mathbb{R_+} \}$, and $u^\ep$, $p^\ep$ and $b^\ep$ represent the velocity, total pressure and magnetic field, respectively. For notational convenience, we write
\begin{equation*}
	\Delta_h =  \partial_1^2+\partial_2^2, \quad \partial_h = (\partial_1, \partial_2).
\end{equation*}
\eqref{eq0} is supplemented with the slip boundary condition
\begin{equation}\label{eq46}
	(u_3^\ep , \partial_3 u_h^\ep , b_3^\ep , \partial_3 b_h^\ep )|_{x_3=0}=0.
\end{equation}
\eqref{eq0} models the reconnecting plasmas (see, e.g., \cite{Craig2009,Craig2007}) and the turbulent diffusion of rotating fluids in Ekman layers (see \cite[Chapter 4]{1987Geophysical}).
Formally, as the parameter $\ep \rightarrow 0^+$, \eqref{eq0}
tends to the following system
	\begin{equation}\label{eq47}
		\left\{\begin{array}{*{4}{ll}}
			\partial_t u^0 + u^0 \cdot \nabla u^0 - \Delta_h u^0 +
 \nabla p^0 =  b^0\cdot \nabla b^0 \quad & {\rm in} ~~\mathbb{R}_+^3,\\
 \nabla \cdot u^0 = 0 \quad & {\rm in} ~~\mathbb{R}_+^3,\\
			\partial_t b^0 + u^0 \cdot \nabla b^0 - \partial_3^2 b^0
=  b^0\cdot \nabla u^0 \quad & {\rm in} ~~ \mathbb{R}_+^3,\\
 \nabla \cdot b^0 = 0 \quad & {\rm in} ~~ \mathbb{R}_+^3,
		\end{array}\right.
	\end{equation}
together with the boundary condition
\begin{equation}
		(u_3^0, b_3^0, \partial_3 b_h^0)|_{x_3=0}=0.
\end{equation}
%If the initial data is a small perturbation near the steady solution
%given by a background magnetic field,
%Wu and Zhu \cite{Wu2021Advance} successfully established the global stability
%of MHD equation \eqref{eq47} in the three-dimensional whole space $\mathbb{R}^3$.
The goal of this paper is to rigorously
establish that the global solution of system \eqref{eq0} converges to
the one of system \eqref{eq47} as the parameter $\ep$ tends to zero.
%This is an interesting but extremely challenging problem involving vanishing viscosity
%limit and global stability with only partial dissipation.

Due to the smallness of parameter $\ep$ in \eqref{eq0}, it is challenging to study the
global stability problem. To this end, let us introduce the recent developments on the stability and decay of the 3D anisotropic incompressible Navier-Stokes and MHD equations.
The Navier-Stokes and the MHD equations with anisotropic  viscous dissipation arise in the modeling of the reconnecting plasmas (see, e.g., \cite{Craig2009,Craig2007}) and the turbulent diffusion of rotating fluids in Ekman layers (see \cite[Chapter 4]{1987Geophysical}).
If the fluid is not affected by the magnetic field, then the system
\eqref{eq47} will transform into the incompressible Navier-Stokes equation with only horizontal dissipation. The well-posedness and stability results can be found in many references such as \cite{Chemin2006,Chemin2007,Iftimie2002,Paicu2005}.
%However, the stability for the 3D Navier-Stokes equations with only one directional dissipation remains an open problem (see, e.g., \cite{Wu2021Advance}).
On the other hand, there are considerable progress on the stability of the background magnetic field for the ideal MHD equations and the MHD equations with kinematic dissipation and no magnetic diffusion.
The nonlinear stability for the ideal MHD equations can be found in \cite{Longtime1988,CL2018,HXY2018,PZZ2018,WZ2017}. The stability problem for the MHD equations without magnetic diffusion was first investigated in \cite{LXZ2015}. The stability has now been obtained by several mathematicians by various methods (see, e.g., \cite{DZ2018,HW2010,LXZ2015,PZZ2018,RWXZ2014,RXZ2016,TW2018,WW2017,WWX2015}). More recent stability and global regularity results can be found in \cite{CWY2014,DJJW2018,DLW2019,Fefferman2014,HL2013,JNWXY2015,WuJiahong2018} and the references therein. In particular,
Wu and Zhu \cite{Wu2021Advance} successfully obtained the global stability
for the MHD equations with mixed partial dissipation and magnetic diffusion \eqref{eq47} under small perturbations near a background magnetic field in $\mathbb{R}^3$.
We note that the vertical diffusion may be relevant
when the resistivity of electrically conducting fluids such as certain plasmas and liquid mental is anisotropic and only in the vertical direction
(see \cite{1989Magnetoresistance}).
However, due to the appearance of small viscous term $\ep \p_3^2 u^\ep$ in \eqref{eq0}
and domain boundary, it is tricky problem for us to establish the global uniform energy estimate
under the framework of classical Sobolev space as in \cite{Wu2021Advance}.
The reason is that we can't establish the higher order normal 
derivative by the combination of tangential derivative 
estimate and the inherent second order normal derivative in Eq.$\eqref{eq0}_1$.

In order to find the suitable energy framework for the Eq.\eqref{eq0},
let us introduce the progress about the full vanishing viscosity limit of
the incompressible Navier-Stokes equation.
The inviscid limit of Navier-Stokes equation in the whole space case has been examined by many
authors, see, for instance, \cite{Constantin1986,Constantin1988,Kato1972,Masmoudi2007CMP, ConstantinWu1995,ConstantinWu1996}.
However, in the presence of physical boundary, the problem
becomes much more complicated due to the
possible appearance of boundary layer.
%On the one hand, for incompressible Navier-Stokes equation with no-slip
%boundary condition, the vanishing viscosity limit is one of the
%major open problems due to the possible appearance of boundary layer.
%A list of significant results on the Prandtl's boundary layer have been obtained, for instance, \cite{Sammartino1998CMP1,Sammartino1998CMP2,MR3207194CMPA2014,Guo2017ANN,Guo2023CMPA, XinZhang2004,WangYang2015,XinZhangZhao2024,ZhangZhang2016,IgnatovaVicol2016,PaicuZhang2021,WangwangZhang2024,MasmoudiWong2015, LiYang2020,GrenierGuoToan2016,ChenWangZhang2018,KukavicaVicol2013,Dietert2019}.
%For the incompressible Navier-Stokes equation with
Under the Navier-slip boundary condition, considerable progress has also been made.
Indeed, for some special types of Navier boundary conditions, the main part of the boundary layer vanishes and, as a consequence, uniform bounds and uniform existence time (independent of viscosity) can be established (see, e.g., \cite{BeiraodaVeiga2010,BeiraodaVeiga2011,Xiao2007}).
However, as pointed in \cite{Iftimie2011ARMA}, uniform regularity estimates
in Sobolev spaces are not expected for general curved boundaries.
 Masmoudi and Rousset \cite{Masmoudi2012ARMA} successfully established uniform estimates in conormal Sobolev spaces
for the 3D general smooth domains with the Navier-slip
boundary conditions. The convergence
of the viscous solutions to the inviscid ones is then obtained by a compactness argument.
Furthermore, based on these uniform estimates, \cite{Xiao2013,Gie2012} provided some better convergence rates.
We briefly recall the vanishing viscosity limits of the viscous MHD equations.
For the 3D viscous MHD system in certain bounded domains with slip boundary conditions,
Xiao, Xin and Wu \cite{Xiao2009JFA}  established the uniform Sobolev regularity
and specified the convergence rate from the viscous solution to the inviscid one.
Later, Meng and Wang \cite{MR3472518}
derived uniform estimates on strong solutions to the
incompressible MHD system with a slip boundary condition
in a conormal Sobolev space with viscosity weight.
This uniform estimate implies that the solution of the viscous MHD system
converges strongly to a solution of the ideal MHD system from a compactness argument.
The above full vanishing viscosity limit progress and convergence rate of solution
between viscous equation and inviscid equation hold only in a small time interval.
%\textit{Motivated by \cite{Masmoudi2012ARMA,MR3472518},
%we will establish the uniform estimates of global solution
%of the viscous system \eqref{eq0} under the conormal Sobolev space framework}.

Due to the smallness of parameter $\ep$ in \eqref{eq0}
and appearance of domain boundary, we hope to establish the energy estimate
under the framework of conormal Sobolev space rather than the classical
Sobolev space as in \cite{Wu2021Advance}.
In order to deal with the degeneracy of horizontal dissipation of magnetic field,
we investigate the global stability of solution near a  background magnetic field.
Clearly, a special solution of \eqref{eq0} and \eqref{eq47} is given by the zero velocity field and the background magnetic field $b^{(0)}=(1,0,0)$. The perturbation $(u^\ep,B^\ep)$ around this equilibrium with $B^\ep= b^\ep-b^{(0)}$ satisfies
	 \begin{equation}\label{eq1}
	 	\left\{\begin{array}{*{4}{ll}}
	 		\partial_t u^\ep + u^\ep \cdot \nabla u^\ep - \Delta_h u^\ep + \nabla p^\ep = \varepsilon \partial_3^2 u^\ep + B^\ep\cdot \nabla B^\ep + \partial_1 B^\ep
\quad & {\rm in} ~~\mathbb{R}_+^3,\\
\nabla \cdot u^\ep = 0 \quad & {\rm in} ~~\mathbb{R}_+^3,\\
	 		\partial_t B^\ep + u^\ep \cdot \nabla B^\ep - \partial_3^2 B^\ep = \varepsilon \Delta_h B^\ep + B^\ep\cdot \nabla u^\ep + \partial_1 u^\ep
\quad & {\rm in} ~~ \mathbb{R}_+^3,\\
\nabla \cdot B^\ep = 0 \quad & {\rm in} ~~ \mathbb{R}_+^3,\\
	 		u_3^\ep = 0,\quad \partial_3 u_h^\ep=0, \quad B_3^\ep = 0,\quad \partial_3 B_h^\ep=0\quad & {\rm on} ~~ \mathbb{R}^2 \times \{ x_3=0 \}.
	 	\end{array}\right.
	 \end{equation}	
This paper aims at the uniform stability problem on the perturbation $(u^\ep,B^\ep)$. Equivalently, we obtain a small data global well-posedness result for \eqref{eq1} with the initial condition
\begin{align}\label{eq1111}
		u^\ep(0,x)=u_0(x),\quad B^\ep(0,x)=B_0(x).
\end{align}
Formally, as $\ep \to 0^+$, \eqref{eq1} will
tend to the following system
	 \begin{equation}\label{eq12}
	 	\left\{\begin{array}{*{4}{ll}}
	 		\partial_t u^0 + u^0 \cdot \nabla u^0 - \Delta_h u^0 + \nabla p^0
=B^0 \cdot \nabla B^0+ \partial_1 B^0
\quad & {\rm in} ~~\mathbb{R}_+^3,\\
\nabla \cdot u^0 = 0 \quad & {\rm in} ~~\mathbb{R}_+^3,\\
	 		\partial_t B^0 + u^0 \cdot \nabla B^0 - \partial_3^2 B^0
=B^0 \cdot \nabla u^0 + \partial_1 u^0
\quad & {\rm in} ~~ \mathbb{R}_+^3,\\
\nabla \cdot B^0 = 0 \quad & {\rm in} ~~ \mathbb{R}_+^3,\\
	 	u_3^0 = 0, \quad B_3^0 = 0,
\quad \partial_3 B_h^0=0\quad & {\rm on} ~~\mathbb{R}^2 \times \{ x_3 =0\}.
	 	\end{array}\right.
	 \end{equation}
To give a precise account of our main results, we define
$
		H_{tan}^m(\mathbb{R}_+^3)\overset{def}{=}
		\{ f \in
		L^2(\mathbb{R}_+^3) ~|~ \partial_h^k f \in L^2(\mathbb{R}_+^3),~~\forall 0\le k \le m \}.
$
	To define the conormal Sobolev spaces, we consider $(Z_k)_{1 \le k \le 3}$, a finite set of generators of vector fields tangent to the boundary $\mathbb{R}^2 \times \{x_3=0\}$, with $Z_k = \partial_k$, $k=1,2$ and $Z_3= \varphi (x_3) \partial_3$, where $\varphi(x_3)$ is any smooth bounded function such that $\varphi(0)=0$, $\varphi'(0) \neq 0$ and $\varphi(x_3)>0$ for every $x_3>0$ (for example, $\varphi(x_3)=x_3(1+x_3)^{-1}$ fits). We set
	\begin{equation*}
	\begin{aligned}
H_{co}^m(\mathbb{R}_+^3)\overset{def}{=} \{ f \in
		L^2(\mathbb{R}_+^3)~|~Z^\alpha f \in L^2(\mathbb{R}_+^3),~~\forall 0\le |\alpha| \le m \},
	\end{aligned}
\end{equation*}
	where $Z^\alpha \overset{def}{=} Z^{\alpha_1}_1 Z^{\alpha_2}_2 Z^{\alpha_3}_3$.
	We also use the following notations, for every $m \in \mathbb{N}$:
	\begin{equation*}
	\begin{aligned}
		&\Vert f \Vert_{H^m_{tan}}^2\overset{def}{=} \sum\limits_{0\le k \le m} \Vert \partial_h^k f \Vert_{L^2}^2,\quad \Vert f \Vert_{H^m_{co}}^2\overset{def}{=} \sum\limits_{0\le|\alpha| \le m} \Vert Z^\alpha f \Vert_{L^2}^2.
	\end{aligned}
\end{equation*}
Finally, we also define the operator $\Lambda^{s}_h$, $s \in \mathbb{R}$ by
$
\Lambda_h^s f(x_h) \overset{def}{=} \int_{\mathbb{R}^2} |\xi_h|^s \hat f(\xi_h) {\rm e}^{2\pi i x_h \cdot \xi_h} d\xi_h,
$
where $x_h=(x_1,x_2)$, $\xi_h=(\xi_1,\xi_2)$ and $\hat{f}$ is the Fourier transform of $f$ in $\mathbb{R}^2$.
Now, we state our first result about the global-in-time
uniform estimate as follows.
\begin{theo}\label{th1}
	For every integer $m \ge 3$, we consider \eqref{eq1} with the initial data $(u_0,B_0)\in H^m_{co}(\mathbb{R}_+^3)$ satisfying $\nabla \cdot u_0 = \nabla \cdot B_0=0$ and $(\partial_3 u_0,\partial_3 B_0)\in H^{m-1}_{co}(\mathbb{R}_+^3)$. Then, there exists a small constant $\delta_0>0$ such that, if
\begin{equation*}
	\begin{aligned}
			\Vert (u_0, B_0) \Vert_{H^m_{co}}^2
			+\Vert (\partial_3 u_0, \partial_3 B_0) \Vert_{H^{m-1}_{co}}^2 \le \delta_0,
		\end{aligned}
\end{equation*}
the	Eq.\eqref{eq1} has a unique global
        solution $(u^\ep, B^\ep)$ satisfying
        \begin{equation}\label{uniform_estimate}
		\begin{aligned}
			&\|(u^\ep , B^\ep) (t) \|_{H^m_{co}}^2
            +\| (\partial_3 u^\ep, \partial_3 B^\ep) (t) \|_{H^{m-1}_{co}}^2
			+\int_{0}^{t}
			\left(\|(\partial_h u^\ep, \partial_3 B^\ep) \|_{H^m_{co}}^2
			+\|(\partial_h \partial_3 u^\ep, \partial_3^2 B^\ep)\|_{H^{m-1}_{co}}^2\right)
            d \tau\\
           &+\varepsilon \int_{0}^{t}
           \left( \Vert (\partial_3 u^\varepsilon,\partial_h B^\varepsilon) \Vert_{H^m_{co}}^2
           +\Vert (\partial_3^2 u^\varepsilon,\partial_h \partial_3 B_h^\varepsilon) \Vert_{H^{m-1}_{co}}^2 \right) d \tau
           +\int_{0}^{t} \Vert \partial_1 B^\varepsilon(\tau) \Vert_{H^{m-1}_{co}}^2 d \tau
			\le C \delta_0
		\end{aligned}
       \end{equation}
    for a constant $C>0$ and all $t>0$.
    Furthermore, let $(u^0, B^0)$ be the global solution of system \eqref{eq12}
    supplemented
    with the same initial data $(u_0, B_0)$ as system \eqref{eq1}. Then, it holds
    \begin{equation*}
    (u^\ep, B^\ep) \rightarrow (u^0, B^0) \text{~strongly~in~}
    L^\infty(0, t; H^{m-1,co}_{loc}(\mathbb{R}_+^3)),
    \end{equation*}
    for any $t>0$.
	\end{theo}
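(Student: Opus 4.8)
The plan is to prove the uniform bound \eqref{uniform_estimate} by a continuity (bootstrap) argument and then deduce the convergence by compactness. For each fixed $\varepsilon>0$ the system \eqref{eq1} is fully parabolic (the diffusion operator acting on $u^\varepsilon$ is $\Delta_h+\varepsilon\partial_3^2$ and that acting on $B^\varepsilon$ is $\partial_3^2+\varepsilon\Delta_h$), so a local-in-time solution in $H^m_{co}$ with one extra normal derivative in $H^{m-1}_{co}$ exists by a routine iteration/mollification scheme respecting the boundary conditions \eqref{eq46}. Denoting by $\mathcal{E}(t)$ the left-hand side of \eqref{uniform_estimate}, I would assume a priori that $\mathcal{E}(t)\le\delta_1$ on a maximal interval and aim to prove the improved bound $\mathcal{E}(t)\le C\delta_0+C\mathcal{E}(t)^{3/2}$; since every nonlinear contribution is at least cubic in the small energy, choosing $\delta_0$ small then closes the estimate uniformly in $\varepsilon$ and yields a global solution.

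The energy is built in three coupled layers, matching the three groups of dissipation in \eqref{uniform_estimate}. First, applying $Z^\alpha$ for $|\alpha|\le m$ to \eqref{eq1} and testing against $(Z^\alpha u^\varepsilon,Z^\alpha B^\varepsilon)$ produces the diagonal dissipations $\|\partial_h u^\varepsilon\|_{H^m_{co}}^2$ and $\|\partial_3 B^\varepsilon\|_{H^m_{co}}^2$ together with the $\varepsilon$-weighted terms; here one must control the commutators $[Z^\alpha,u^\varepsilon\cdot\nabla]$ and the conormal commutators generated by the weight $\varphi$ in $Z_3$, and estimate the pressure through the elliptic problem obtained from $\nabla\cdot u^\varepsilon=0$ and the normal trace of the momentum equation. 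The crucial second layer is the enhanced dissipation from the background field: adding to the basic energy a small multiple $\eta\langle Z^\alpha u^\varepsilon,Z^\alpha\partial_1 B^\varepsilon\rangle$ with $|\alpha|\le m-1$, and using that $\partial_1 B^\varepsilon$ forces the $u^\varepsilon$-equation while $\partial_1 u^\varepsilon$ forces the $B^\varepsilon$-equation, the time derivative of this cross functional yields $\eta\|\partial_1 B^\varepsilon\|_{H^{m-1}_{co}}^2$ with a favorable sign, modulo $\eta\|\partial_1 u^\varepsilon\|^2$ (absorbed into the horizontal dissipation of $u^\varepsilon$) and lower-order remainders absorbed into the other dissipations; this supplies the otherwise missing horizontal control on $B^\varepsilon$ uniformly in $\varepsilon$.

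The third layer, and the step I expect to be the main obstacle, is the estimate for the normal derivatives $(\partial_3 u^\varepsilon,\partial_3 B^\varepsilon)$ in $H^{m-1}_{co}$. As the authors note, one cannot recover $\partial_3^2 u^\varepsilon$ from the momentum equation uniformly in $\varepsilon$, since the only genuine second normal derivative there is the degenerate term $\varepsilon\partial_3^2 u^\varepsilon$. I would instead avoid solving for $\partial_3^2 u^\varepsilon$ altogether: the normal derivative of the third component is handled algebraically through the divergence-free condition, $\partial_3 u_3^\varepsilon=-\partial_h\cdot u_h^\varepsilon$, so it is controlled by horizontal derivatives already present in the first layer; for the horizontal components $\partial_3 u_h^\varepsilon$ I would differentiate the momentum equation in $x_3$ and treat $\partial_3 u_h^\varepsilon$ as an independent unknown, which again enjoys only horizontal dissipation $\|\partial_h\partial_3 u_h^\varepsilon\|_{H^{m-1}_{co}}^2$, closing through the coupling term $\partial_1\partial_3 B^\varepsilon$ and the enhanced dissipation of the second layer. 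The boundary condition $\partial_3 u_h^\varepsilon|_{x_3=0}=0$ in \eqref{eq46} is used to discard the boundary integrals generated by integration by parts in $x_3$, while the vertical dissipation of $B^\varepsilon$ simultaneously delivers $\|\partial_3^2 B^\varepsilon\|_{H^{m-1}_{co}}^2$. Balancing these three layers with the correct hierarchy of small coefficients is the technical heart of the argument.

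Finally, for the vanishing-$\varepsilon$ convergence I would argue by compactness rather than by estimating the difference directly, since the boundary conditions for \eqref{eq1} and \eqref{eq12} do not match (the limit system drops $\partial_3 u_h=0$). The uniform bound \eqref{uniform_estimate} gives $(u^\varepsilon,B^\varepsilon)$ bounded in $L^\infty(0,t;H^m_{co})$ with $(\partial_3 u^\varepsilon,\partial_3 B^\varepsilon)$ bounded in $L^\infty(0,t;H^{m-1}_{co})$, hence enough genuine regularity for a local compact embedding into $H^{m-1}_{co}$; combined with a uniform bound on $(\partial_t u^\varepsilon,\partial_t B^\varepsilon)$ read off from the equations, the Aubin--Lions--Simon lemma yields a subsequence converging strongly in $L^\infty(0,t;H^{m-1,co}_{loc}(\mathbb{R}_+^3))$. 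Passing to the limit in the weak formulation, the terms $\varepsilon\partial_3^2 u^\varepsilon$ and $\varepsilon\Delta_h B^\varepsilon$ tend to zero in $L^2(0,t;H^{m-1}_{co})$ (their norms are $O(\sqrt\varepsilon)$ by the $\varepsilon$-weighted dissipation bounds), the nonlinear terms pass by strong convergence, and exactly the boundary conditions of \eqref{eq12} survive, so the limit solves \eqref{eq12}. Uniqueness for \eqref{eq12} then upgrades the convergence from a subsequence to the full family.
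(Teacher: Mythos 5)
Your overall skeleton (bootstrap argument, tangential layer, a cross functional of the type $\langle Z^\alpha u^\ep, Z^\alpha \partial_1 B^\ep\rangle$ generating the hidden dissipation $\|\partial_1 B^\ep\|_{H^{m-1}_{co}}^2$, and Aubin--Lions compactness for the $\ep\to 0$ limit) matches the paper's, and your second layer is, modulo packaging, the same device as the paper's estimate of $G(t)$ in Section 3.3. However, there is a genuine gap in your tangential layer. The assertion that every nonlinear contribution is ``at least cubic in the small energy'' and hence absorbable is not sufficient: what must be shown is that each term is bounded by (energy)$\times$(time-integrable dissipation), and this fails for the commutator terms singled out in \eqref{eq79}, namely $\int_{\mathbb{R}^3_+}\partial_1 u^\ep_1|\partial_2^m B^\ep|^2\,dx$ and $\int_{\mathbb{R}^3_+}\partial_1 u^\ep_3|\partial_2^m B^\ep|^2\,dx$, which arise from $[\partial_2^m, u^\ep\cdot\nabla]B^\ep$ once $\nabla\cdot u^\ep=0$ is used. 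Since $B^\ep$ has no horizontal diffusion uniformly in $\ep$, neither copy of $\partial_2^m B^\ep$ carries a usable derivative: the anisotropic inequality \eqref{a1} would demand either $\partial_2\partial_2^m B^\ep$ ($m+1$ tangential derivatives, uncontrolled) or $\partial_1\partial_2^m B^\ep$ (i.e.\ $\partial_1 B^\ep$ at conormal order $m$, whereas your cross functional can only reach order $m-1$, for the same reason the paper's $G(t)$ stops at $H^{m-1}_{co}$). The paper's resolution is structural, not generic: it substitutes $\partial_1 u^\ep$ by the right-hand side of the $B^\ep$-equation as in \eqref{eq80}, converting these terms into a total derivative $\frac{d}{dt}\int_{\mathbb{R}^3_+}B_1^\ep|\partial_2^m B^\ep|^2\,dx$ plus terms matched to the available dissipations; this is also why the closure inequality \eqref{eq99} carries quartic terms $E(t)^2+G(t)^2$ rather than the pure $C\delta_0+C\mathcal{E}(t)^{3/2}$ form you posit. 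Without this step (or an equivalent one), your first layer does not close.

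A second gap lies in your third layer. Differentiating the momentum equation in $x_3$ and treating $\partial_3 u_h^\ep$ as an independent unknown produces the pressure term $\partial_3\partial_h p^\ep$, which your proposal never addresses; eliminating the pressure is precisely why the paper (following Masmoudi--Rousset) estimates the vorticity $\omega_h^{u^\ep}$, whose equation \eqref{eq2} is pressure-free and which satisfies the Dirichlet condition $\omega_h^{u^\ep}|_{x_3=0}=0$ inherited from the slip condition \eqref{eq46}. Your pressure term is possibly salvageable --- integrate by parts horizontally, use $\partial_h\cdot\partial_3 u_h^\ep=-\partial_3^2 u_3^\ep$ together with the conormal elliptic estimate \eqref{a15} for $\|\nabla p^\ep\|_{H^{m-1}_{co}}$ --- but that argument is the crux of the step and is absent; moreover the convection commutators in this formulation involve $Z^\gamma\partial_3(\partial_3 u_h^\ep)$ and require the Hardy-type trick near the boundary (boundedness of $u_3^\ep/\varphi$) exactly as in \eqref{3206}--\eqref{3237}. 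By contrast, the compactness part of your proposal is correct and coincides with the paper's argument, including the $O(\sqrt{\ep})$ vanishing of the singular viscous terms and the use of uniqueness to upgrade subsequential convergence.
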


\begin{rema}
	Recently, Wu and Zhu \cite{Wu2021Advance} successfully established the global stability of the limit equation \eqref{eq47} in the three-dimensional whole space $\mathbb{R}^3$, assuming the initial data is a small perturbation of the steady-state solution induced by a background magnetic field. However, the half-space setting considered here is significantly more challenging due to the effects of domain boundary. Our result in Theorem \ref{th1} provides a global well-posedness framework for both the viscous equation \eqref{eq0} and the limit equation \eqref{eq47}. In comparison to \cite{Wu2021Advance}, the energy estimate \eqref{uniform_estimate} involves only first-order normal derivatives, reflecting the impact of the boundary layer.
\end{rema}

%\begin{rema}
%	If the initial magnetic field is close to a homogeneous magnetic field,
%	Cai and Lei \cite{CL2018} and He, Xu and Yu \cite{HXY2018} studied the global %well-posedness of MHD equation in three-dimensional whole space $\mathbb{R}^3$.
%	These two global well-posedness results strongly rely on the facts that
%	the fluid domain is a whole space and the viscosity coefficient is the same as
%	the resistivity coefficient.
%	Therefore, the viscous solutions converge in the classical sense to
%	the zero-viscosity solution.
%	However, our results in Theorem \ref{th1} study
%	global well-posedness incompressible MHD equations with anisotropic dissipation
%	in $\mathbb{R}_+^3$ and establish the uniform estimate independent of $\varepsilon$.
%\end{rema}

\begin{rema}
	The anisotropic dissipative structure in \eqref{eq0} eliminates the need for deriving the $W^{1, \infty}$
	estimate, distinguishing our analysis from the local-in-time vanishing full viscosity limit of the well-known incompressible Navier-Stokes equations (cf. \cite{Masmoudi2012ARMA}). Instead, our result in Theorem \ref{th1} addresses global-in-time estimates that are uniform with respect to $\varepsilon$
	in \eqref{eq0}. Given the exclusively vertical dissipative structure of the magnetic field in \eqref{eq0}, we leverage the enhanced dissipation induced by the background magnetic field and introduce three layers of energy functionals. This approach allows us to establish global-in-time uniform bounds that are independent of both the vertical viscosity and horizontal resistivity.
\end{rema}

Theorem \ref{th1} states that
the global solution $(u^\ep, B^\ep)$ of system \eqref{eq1} converges to the solution $(u^0, B^0)$,
which is the global solution of \eqref{eq12} under the same
background magnetic field and small initial data assumptions, as the parameter
$\ep$ tends to zero.
One natural question is to establish the specific convergence rate
between two global solutions.
Since the two global solutions both tend to the the equilibrium state
as the time tends to infinity, the acceptable convergence rate
should be independent of time.
This is an interesting and tricky problem, since we have to
establish some suitable time decay rate estimates under the
conormal derivative framework.
We are able to obtain the convergence rate when the magnetic field vanishes, namely the Navier-Stokes case. More precisely, we consider
   \begin{equation}\label{eq1-3}
  \left\{\begin{array}{*{4}{ll}}
\partial_t u^\var + u^\var \cdot \nabla u^\var - \Delta_h u^\var + \nabla p^\var
= \varepsilon \partial_3^2 u^\var
& {\rm in} ~~\mathbb{R}_+^3,\\
\nabla \cdot u^\var = 0 & {\rm in} ~~\mathbb{R}_+^3,\\
	 		u_3^\var = 0,\quad \partial_3 u_h^\var=0
\quad & {\rm on} ~~ \mathbb{R}^2 \times \{ x_3=0\},\\
u^\var|_{t=0}=u_0, & {\rm in} ~~\mathbb{R}_+^3.
	 	\end{array}\right.
	 \end{equation}
As $\var \rightarrow 0^+$, then $(u^\var, \nabla p^\var)$
converges to the solution $(u^0, \nabla p^0)$ satisfying
\begin{equation}\label{eq1-4}
\left\{\begin{array}{*{4}{ll}}
\partial_t u^0 + u^0 \cdot \nabla u^0 - \Delta_h u^0 + \nabla p^0=0
& {\rm in} ~~\mathbb{R}_+^3,\\
\nabla \cdot u^0 = 0 & {\rm in} ~~\mathbb{R}_+^3\\
	 		u_3^0 =0
\quad & {\rm on} ~~\mathbb{R}^2 \times \{ x_3=0 \} ,\\
u^0|_{t=0}=u_0 & {\rm in} ~~\mathbb{R}_+^3.
	 	\end{array}\right.
	 \end{equation}
The well-posedness and stability results for \eqref{eq1-4}
can be found in many references such as
\cite{Chemin2006,Chemin2007,Iftimie2002,Paicu2005}.
Our second result establishes the global-in-time
uniform estimates and the convergence rate for
anisotropic incompressible Navier-Stokes equations
\eqref{eq1-3}.
	\begin{theo}\label{th2}
		For every integer $m \ge 3$, we consider \eqref{eq1-3} with the initial data
$u_0 \in H^m_{co}(\mathbb{R}_+^3)$ satisfying $\nabla \cdot u_0 = 0$ and
$\partial_3 u_0 \in H^{m-1}_{co}(\mathbb{R}_+^3)$.
Then, there exists a small constant $\delta_1>0$ such that, if
\begin{equation}\label{small-01}
			\Vert u_0 \Vert_{H^m_{co}}^2
			+\Vert \partial_3 u_0\Vert_{H^{m-1}_{co}}^2 \le \delta_1,
\end{equation}
the	Eq.\eqref{eq1-3} has a unique global classical
        solution $u^\ep $ satisfying
        \begin{equation}\label{ns-estimate}
		\begin{aligned}
			&\|u^\ep (t) \|_{H^m_{co}}^2
            +\| \partial_3 u^\ep(t) \|_{H^{m-1}_{co}}^2
			+\int_{0}^{t}
			\left(\|\partial_h u^\ep\|_{H^m_{co}}^2
			+\|\partial_h \partial_3 u^\ep\|_{H^{m-1}_{co}}^2\right)d \tau\\
           &+\varepsilon \int_{0}^{t}
           \left( \Vert \partial_3 u^\varepsilon \Vert_{H^m_{co}}^2
           +\Vert \partial_3^2 u^\varepsilon \Vert_{H^{m-1}_{co}}^2 \right) d \tau
			\le C_1 \delta_1
		\end{aligned}
       \end{equation}
    for all $t>0$.
    Furthermore, for any $s \in \big(\frac{13}{14}, 1\big)$, if there exists a small constant $\delta_2>0$ such that
    \begin{equation}\label{small-02}
    \|\Lambda^{-s}_h u_0\|_{L^2}^2
			+\|\Lambda_h^{-s} \partial_3 u_0\|_{L^2}^2
    \le \delta_2,
    \end{equation}
    then the global solution $u^\ep$ satisfies the time decay estimate
    \begin{equation}\label{decay-1}
	\|u^\ep(t) \|_{H_{tan}^m}^2+\|\partial_3 u^\ep(t)\|_{H_{tan}^{m-1}}^2
	\le C_2 (1+t)^{-s}
	\end{equation}
	for all $t>0$.
   If $u^0$ is the global solution of system \eqref{eq1-4}
    supplemented with initial data $u_0$ satisfying
   \eqref{small-01} and \eqref{small-02}, then
    \begin{equation}\label{decay-2}
	\|u^0(t) \|_{H_{tan}^m}^2+\|\partial_3 u^0 (t)\|_{H_{tan}^{m-1}}^2
	\le C_3 (1+t)^{-s}
	\end{equation}
	for all $t>0$.
    Furthermore, for any $t>0$, the difference $u^\ep-u^0$ will obey the estimate
		\begin{equation}\label{difference-estimate}
			\|(u^\varepsilon - u^0)(t)\|_{L^2}^2
			\le C_4
			\varepsilon^{\frac12},
		\end{equation}
with the positive constants $C_i(i=1,2,3,4)$
above independent of time and parameter $\ep$.
	\end{theo}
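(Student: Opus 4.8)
The plan is to establish the four assertions of Theorem \ref{th2} in sequence, first reusing a simplification of the conormal energy machinery behind Theorem \ref{th1}, then layering a horizontal time-decay analysis and a comparison argument on top of it. For the uniform estimate \eqref{ns-estimate} I would run a continuity argument on $\mathcal E(t)=\|u^\ep(t)\|_{H^m_{co}}^2+\|\partial_3 u^\ep(t)\|_{H^{m-1}_{co}}^2$ against the dissipation appearing in \eqref{ns-estimate}. Applying $Z^\alpha$ with $|\alpha|\le m$ to \eqref{eq1-3} and pairing in $L^2$, the horizontal Laplacian yields $\|\partial_h Z^\alpha u^\ep\|_{L^2}^2$ while the term $\ep\partial_3^2 u^\ep$ yields $\ep\|\partial_3 Z^\alpha u^\ep\|_{L^2}^2$; the slip conditions $(u_3^\ep,\partial_3 u_h^\ep)|_{x_3=0}=0$ annihilate the boundary contributions. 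The pressure is removed via the elliptic problem $-\Delta p^\ep=\nabla\cdot(u^\ep\cdot\nabla u^\ep)$ with its Neumann-type data, and the commutators $[Z^\alpha,u^\ep\cdot\nabla]u^\ep$ are absorbed by the smallness of $\mathcal E$ using conormal product and anisotropic Sobolev inequalities. For the first normal derivative I differentiate the $u_h^\ep$-equation in $x_3$ and exploit $\partial_3 u_h^\ep|_{x_3=0}=0$, so the same horizontal-dissipation estimate closes, while $\partial_3 u_3^\ep=-\partial_h\cdot u_h^\ep$ is controlled by $\nabla\cdot u^\ep=0$. This is strictly simpler than Theorem \ref{th1} (no magnetic coupling, no background-field terms) and yields \eqref{ns-estimate} with constants independent of $\ep$ and $t$.

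Second, for the decay \eqref{decay-1} I would use the Fourier-splitting (negative Sobolev norm) method in the horizontal frequency. The preliminary step is to propagate $\|\Lambda_h^{-s}(u^\ep,\partial_3 u^\ep)(t)\|_{L^2}\lesssim\delta_2$ uniformly in $t$: applying $\Lambda_h^{-s}$ to \eqref{eq1-3} and pairing, one estimates $\|\Lambda_h^{-s}(u^\ep\cdot\nabla u^\ep)\|_{L^2}$ by a product estimate in negative horizontal Sobolev spaces against the energy norms. Boundedness of such products forces $s<1$, while convergence of the resulting time integral (closed by the very decay one is proving, in a bootstrap) forces the lower bound $s>\tfrac{13}{14}$. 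Granted this bound, I split $\mathcal E$ at horizontal frequency $R(t)^2=\beta/(1+t)$: on $\{|\xi_h|>R\}$ the horizontal dissipation dominates, and on $\{|\xi_h|\le R\}$ the weight $|\xi_h|^{-2s}$ converts the negative norm into a low-frequency remainder, turning the energy inequality into $\frac{d}{dt}E_{tan}+\frac{c\beta}{1+t}E_{tan}\lesssim(1+t)^{-1-s}$ with $E_{tan}=\|u^\ep\|_{H^m_{tan}}^2+\|\partial_3 u^\ep\|_{H^{m-1}_{tan}}^2$. Integration gives $E_{tan}(t)\lesssim(1+t)^{-s}$, i.e.\ \eqref{decay-1}. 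Because every constant above is uniform in $\ep\ge0$, the same argument at $\ep=0$ produces \eqref{decay-2} for $u^0$.

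Third, for \eqref{difference-estimate} set $w=u^\ep-u^0$ and $q=p^\ep-p^0$, which solve $\partial_t w+u^\ep\cdot\nabla w+w\cdot\nabla u^0-\Delta_h w+\nabla q=\ep\partial_3^2 u^\ep$ with $w(0)=0$ and $w_3|_{x_3=0}=0$. In the $L^2$ estimate $\langle u^\ep\cdot\nabla w,w\rangle=0$ and $\langle\nabla q,w\rangle=0$; integrating the forcing by parts in $x_3$ (the boundary term vanishes since $\partial_3 u_h^\ep|_{x_3=0}=0=w_3|_{x_3=0}$) gives $\ep\langle\partial_3^2 u^\ep,w\rangle=-\ep\|\partial_3 u^\ep\|^2+\ep\langle\partial_3 u^\ep,\partial_3 u^0\rangle$, so the uncontrolled part is dominated by $\frac{\ep}{2}\|\partial_3 u^0\|^2$. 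Writing the transport term as $\langle w\cdot\nabla u^0,w\rangle=-\langle w\cdot\nabla w,u^0\rangle$ and using $w_3=-\int_0^{x_3}\partial_h\cdot w_h$, the anisotropic structure lets me absorb it into $\frac{c}{2}\|\partial_h w\|^2$ at the price of a coefficient $h(\tau)$ built from decaying norms of $u^0$; by \eqref{decay-2}, $h\lesssim(1+\tau)^{-2s}$ is integrable precisely because $s>\tfrac12$. A Gronwall estimate (with $w(0)=0$) then gives the growing bound $\|w(t)\|_{L^2}^2\lesssim\ep(1+t)^{1-s}$, uniform in $t$ since $\int_0^\infty h<\infty$. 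Independently, the triangle inequality with \eqref{decay-1}--\eqref{decay-2} gives the decaying bound $\|w(t)\|_{L^2}^2\lesssim(1+t)^{-s}$. Taking the minimum of the two and optimizing at the crossover $1+t\sim\ep^{-1}$ produces $\sup_{t>0}\|w(t)\|_{L^2}^2\lesssim\ep^{s}\le\ep^{1/2}$, which is \eqref{difference-estimate}.

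I expect the main obstacle to be the uniform-in-time propagation of the negative horizontal Sobolev norm underpinning the decay: the nonlinearity must be controlled in $\dot{H}^{-s}_h$ against the conormal energy, and its time integral must converge, which is exactly what pins the admissible window $s\in(\tfrac{13}{14},1)$; the closely related requirement of a time-integrable coefficient in the comparison step is what forces $s>\tfrac12$ there. Once these two bounds are secured, the frequency splitting and the crossover optimization are essentially mechanical.
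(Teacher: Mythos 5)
Your overall architecture for parts (i) and (ii) is reasonable: the uniform estimate is indeed a simplification of the Theorem \ref{th1} machinery, and your horizontal Fourier-splitting scheme is essentially equivalent to what the paper does (the paper avoids frequency splitting and instead combines a uniform bound on $\|\Lambda_h^{-s}u\|_{L^2}$, $\|\Lambda_h^{-s}\omega_h^u\|_{L^2}$ with the interpolation $\|f\|_{H^m_{tan}}^2\lesssim \|\Lambda_h^{-s}f\|^{2/(1+s)}\|\partial_h f\|^{2s/(1+s)}$ and the nonlinear ODE $\frac{d}{dt}E+C_0^{-1/s}E^{1+1/s}\le 0$; both routes reduce to the same hard lemma). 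Be aware, however, that the paper's bootstrap ansatz is two-tiered: besides the decay $(1+t)^s E_{tan}(t)\le\delta$ it also assumes the \emph{time-weighted dissipation bound} $\int_0^t(1+\tau)^\sigma\big(\|\partial_h u\|_{H^m_{tan}}^2+\|\partial_h\omega_h^u\|_{H^{m-1}_{tan}}^2\big)d\tau\le\delta$ with $\sigma=s-\frac{14s-13}{6(2-s)}$, and this weighted integral is what makes the H\"older-in-time estimates for the negative-norm propagation converge (and is re-derived to close the loop). Your bootstrap carries only the decay, so your "time integrals converge by the decay one is proving" step is not actually available as stated; moreover the genuinely difficult nonlinear term $\Lambda_h^{-s}(u_3\partial_3\omega_h^u)$, which involves a second normal derivative not controlled by the energy, requires the boundary cutoff decomposition $u_3\partial_3\omega_h^u=\big(\int_0^{x_3}\partial_3u_3\,d\xi\big)\partial_3\omega_h^u\chi+\varphi^{-1}u_3\,\varphi\partial_3\omega_h^u(1-\chi)$ together with the conormal interpolation \eqref{a5}; you flag this obstacle but do not resolve it.

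The concrete error is in part (iii). You claim the Gr\"onwall coefficient satisfies $h(\tau)\lesssim(1+\tau)^{-2s}$, "integrable precisely because $s>\frac12$." But \eqref{decay-2} bounds \emph{squared} norms by $(1+t)^{-s}$, so each norm decays only like $(1+t)^{-s/2}$, and a coefficient of the form $h=\|\partial_3\partial_h u^0\|_{L^2}\|\nabla u^0\|_{L^2}$ (which is what the anisotropic absorption of $\langle w\cdot\nabla u^0,w\rangle$ produces) obeys only $h\lesssim(1+\tau)^{-s}$ with $s<1$. Hence $\int_0^t h\,d\tau\gtrsim(1+t)^{1-s}$ diverges, the exponential factor $\exp\{\int_0^t h\}$ blows up, and both your "growing bound" $\ep(1+t)^{1-s}$ and the crossover optimization collapse. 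The paper avoids exactly this trap: it bounds $\int_0^t\|\partial_3\partial_h u^0\|\|\nabla u^0\|\,d\tau$ by Cauchy--Schwarz \emph{with weights}, pairing $\int_0^t(1+\tau)^{\sigma}\|\partial_h\partial_3 u^0\|_{L^2}^2\,d\tau<\infty$ (the weighted dissipation estimate of Lemma \ref{lemma54}/\eqref{555}) against $\int_0^t(1+\tau)^{-\sigma}\|\nabla u^0\|_{L^2}^2\,d\tau\lesssim\int_0^t(1+\tau)^{-\sigma-s}d\tau<\infty$ (using $\sigma+s>1$), and treats the forcing the same way, writing $\ep\int_0^t\|\partial_3 u^\ep\|\|\partial_3\bar u\|\,d\tau\le\ep^{1/2}\{\ep\int(1+\tau)^\sigma\|\partial_3 u^\ep\|^2\}^{1/2}\{\int(1+\tau)^{-\sigma}\|\partial_3\bar u\|^2\}^{1/2}$ to get a \emph{uniform-in-time} $\ep^{1/2}$ bound directly, with no crossover needed. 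So the missing ingredient in your scheme is precisely the family of time-weighted dissipation integrals: without them neither the decay bootstrap nor the comparison estimate closes. (Your integration by parts on the forcing and the observation that the boundary terms vanish under the slip condition are correct, and if the Gr\"onwall step were repaired via the weighted integrals, your min-of-two-bounds idea would in fact yield the slightly stronger rate $\ep^{s}$.)
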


\begin{rema}
Due to the lack of the vertical dissipation in
anisotropic incompressible Navier-Stokes \eqref{eq1-4}, the classical
Fourier splitting method can not help us establish time decay rate directly.
Thus, due to analysis of spectrum of linear system and energy estimate
of nonlinear system, the optimal decay rate of anisotropic incompressible Navier-Stokes
with only horizontal dissipation in three-dimensional whole space
can be established \cite{Ji2022CVPDE}.
As a byproduct of Theorem \ref{th2}, we establish optimal time decay
rate for the anisotropic incompressible Navier-Stokes \eqref{eq1-4}
in three-dimensional half space. The advantage of this time decay rate
is that the higher order tangential derivatives of velocity
obey the same time decay rate as the lower one.
On the other hand, due to the estimates
\eqref{decay-1}, \eqref{decay-2}, \eqref{difference-estimate}
and anisotropic Sobolev inequality, it holds
\begin{equation*}
\|(u^\varepsilon - u^0)(t)\|_{L^\infty}
\le C \ep^{\frac{1}{16}},
\end{equation*}
where $C$ is a positive constant independent of time and parameter $\ep$.
\end{rema}

%\begin{rema}
%The 3D Navier-Stokes equations with full dissipation have small data global well-posedness while the 3D incompressible Euler equations are ill-posed and have norm inflation in any $C^k$ or Sobolev space $H^k$, with any positive integer $k$, see \cite{BL2015,BL2015Geom}.
%However, the global-in-time well-posedness of anisotropic incompressible Navier-Stokes
%with only one directional dissipation is still an open problem.
%Therefore it may be necessary to require the equation \eqref{eq1-4}
%has horizontal dissipation.
%\end{rema}

Throughout this paper, we use symbol $A \lesssim B$ for $ A\le C B$ where $C>0$ is a constant which may change from line to line and independent of time and  $\varepsilon$
for $0<\varepsilon <1$.
The symbol $A \thicksim B$ implies that $A \lesssim B$ and $B \lesssim A$.
	The rest of the paper is organized as follows.
	In Section \ref{difficulty}, we explain the difficulties and our approach to establish
the global uniform estimate and time decay rate estimate
for the system  \eqref{eq1} and \eqref{eq1-3} respectively.
In Section  \ref{global-estimate}, we establish the global uniform estimate
for system  \eqref{eq1} under the condition of small initial data.
In Section  \ref{asymptotic-behavior}, we will establish the time decay
rate for the system \eqref{eq1-3} under the addition condition \eqref{small-02}.
This decay rate provides us a specific
convergence rate for the solution of 3D incompressible Navier-Stokes
equations as the vertical viscosity tends to zero.
Finally, we introduce some useful inequalities in Appendix \ref{usefull-inequality}
and provide the detailed proofs for the claimed technical estimates in Appendix
\ref{claim-estimates}.

\section{Difficulties and outline of our approach}\label{difficulty}
In this section, we will explain the main difficulties of proving Theorems \ref{th1} and  \ref{th2} as well as our strategies for overcoming them.

\textbf{Step 1: Analysis of global well-posedness independent of $\ep$.}
Compared with the global-in-time estimate for system \eqref{eq47} in \cite{Wu2021Advance},
the uniform estimate \eqref{uniform_estimate} can only include one order
vertical derivative since our estimates should be independent of parameter $\ep$.
First of all, we apply the anisotropic Sobolev inequality to
establish the estimate of tangential derivative of velocity and magnetic field.
Due to the lack of the vertical dissipation and the horizontal magnetic diffusion,
we can not apply the anisotropic Sobolev inequality to deal with the two difficult terms
\begin{align}\label{eq79}
	\int_{\mathbb{R}_+^3} \partial_1 u^\varepsilon_1 |\partial_2^m B^\varepsilon|^2 dx
	\quad {\rm and} \quad \int_{\mathbb{R}_+^3} \partial_1 u^\varepsilon_3 |\partial_2^m B^\varepsilon|^2 dx.
\end{align}
Thus, we replace $\partial_1 u^\varepsilon_1$ and $\partial_1 u^\varepsilon_3$ via the equation of $B^\varepsilon$ in \eqref{eq1}$_3$, i.e.,
\begin{align}\label{eq80}
	\partial_1 u^\varepsilon
	=
	\partial_t B^\varepsilon + u^\varepsilon \cdot \nabla B^\varepsilon -\varepsilon \Delta_h B^\varepsilon - \partial_3^2 B^\varepsilon - B^\varepsilon\cdot \nabla u^\varepsilon .
\end{align}
Due to the special structure
(i.e., the appearance of term $\partial_1 u^\ep$)
in $\eqref{eq1}_3$ or \eqref{eq80},
we apply  this equation to give the bound of difficult term \eqref{eq79}.
Secondly, we will establish estimate for the vertical derivative of velocity.
With the help of the divergence-free condition, we only need to establish the
estimate for $(\partial_3 u_h^\ep, \partial_3 B_h^\ep)$.
%Due to the dissipative structure of magnetic field in the $x_1$ direction,
%we should also establish estimate for horizontal part of $\nabla \times B^\ep$
%since it will help us cancellate to bad term $\partial_1 B^\ep$ in $\eqref{eq1}_1$.
On the one hand, as we deal with the difficult term
$$\sum\limits_{\substack{\beta+\gamma=\alpha\\\beta \neq 0}} C_{\beta,\gamma} \int_{\mathbb{R}^2 \times [0,1)} Z^\beta u_3^\ep
\cdot Z^\gamma \partial_3 \omega_h^{u^\ep} \cdot Z^\alpha \omega_h^{u^\ep} dx$$
in \eqref{eq01}, one should apply the boundary condition of $u_3^\ep$,
divergence-free condition and anisotropic Sobolev inequality
to control this term by the quantity
$\Vert \omega_h^{u^\ep} \Vert_{H_{co}^{m-1}}
	\Vert \partial_h {u^\ep} \Vert_{H_{co}^{m}}
	\Vert \partial_h \omega_h^{u^\ep} \Vert_{H_{co}^{m-1}}$
(see \eqref{3206} and \eqref{3237} in detail).
On the other hand, similar to \eqref{eq79}, we will apply the
Eq.\eqref{eq80} to control the term
$\int_{\mathbb{R}_+^3} \partial_1 u^\ep_1 |Z^\alpha \omega_1^{B^\ep}|^2 dx$.
Finally, in order to close the estimate, we need to establish
the dissipative estimate $\Vert \partial_1 B^\var(\tau) \Vert_{H^{m-1}_{co}}^2$.
Thus, we apply the dissipative structure of magnetic field $\partial_1 B^\ep$
in Eq.$\eqref{eq1}_1$.
Then, we need to use the Eq.$\eqref{eq1}_3$ again to give the control
of time derivative term
$$
\sum_{|\alpha| \le m-1}
	\int_{\mathbb{R}_+^3}
	Z^\alpha u^\ep \cdot Z^\alpha \partial_1 \partial_t B^\ep dx
$$
in \eqref{3238}.
%
%First of all, we point out that
%\begin{equation}\label{equivalent-norm}
%\|u^\ep (t) \|_{H^m_{co}}^2+\| \partial_3 u^\ep (t) \|_{H^{m-1}_{co}}^2~
%\text{~is ~equivalent ~to}~
%~\|u^\ep (t) \|_{H^m_{tan}}^2+\| \partial_3 u^\ep (t) \|_{H^{m-1}_{co}}^2
%\end{equation}
%
%
%	
%
% Due to the lack of the vertical dissipation and the horizontal magnetic diffusion, we can not bound $\Vert (u,B)(t)\Vert_{H^m_{co}}$ via usual energy estimates.Consequently, we are not able to establish a closed differential inequality for $E(t)$. This force us to include extra terms in the energy estimates. We find that $G(t)$ satisfies our purpose. Every nonlinear terms involved in the energy estimates of $E(t)$. The structure of $G(t)$ is motivated by \eqref{eq0} and through error and trial. Unfortunately, it is still very difficult to bound some nonlinear terms.
%Although subtituting \eqref{eq80} into \eqref{eq79} generates more terms, all the resulting terms can be bounded by $E(t)$ and $G(t)$.
%
%Moreover, we employ the following anisotropic bounds to make efficient use of the anisotropic dissipation in the estimates of the nonlinear terms. These anisotropic bounds are very powerful in the study of stability and global regularity problems on partial differential equations with partial dissipation.
%
%
Therefore, we can use bootstrapping argument and these inequalities to prove Theorem \ref{th1}.

\textbf{Step 2: Analysis of asymptotic behavior with respect to time.}
Let $u^\ep$ and $u^0$ be the global solution of systems \eqref{eq1-3}
and \eqref{eq1-4} with the same initial data assumptions
\eqref{small-01} and \eqref{small-02} respectively.
Then, we are devoted to establishing the specific convergence rate
with respect to parameter $\ep$.
By energy method, one can establish the following estimate
\begin{equation}\label{grow-1}
\begin{aligned}
\|(u^\ep-u^0)(t)\|_{L^2}^2
&\lesssim \varepsilon \exp \bigg\{\int_0^t \|\partial_3 \partial_h u^0\|_{L^2}
                     \|\nabla u^0\|_{L^2} d \tau \bigg\}\\
         &~~~~\times \int_0^t \|\partial_3 u^\varepsilon\|_{L^2}
           \|\partial_3 (u^\ep-u^0)\|_{L^2}
           \exp\bigg\{-\int_0^{\tau} \|\partial_3 \partial_h u^0\|_{L^2}
                     \|\nabla u^0\|_{L^2} d\varsigma\bigg\}d\tau.
\end{aligned}
\end{equation}
If one hopes to obtain uniform  bound (with respect to time) for
the quantity $u^\ep-u^0$, we should not only establish suitable uniform (with respect to time) bound for the quantity
$\int_0^t \|\partial_3 \partial_h u^0\|_{L^2}\|\nabla u^0\|_{L^2} d \tau$
but also provide some time decay estimates for the global solution $u^\ep$ and $u^0$ respectively.
Due to the degeneracy of dissipative term $\partial_3^2 u^\ep$, the classical
Fourier splitting method can not help us establish time decay rate directly.
Thus, from the differential inequality
\begin{equation}\label{energy-2}
\frac{d}{dt}\left(\|u^\ep\|_{H_{tan}^m}^2
+\|\omega_h^{u^\ep}\|_{H_{tan}^{m-1}}^2\right)
+\left(\|\partial_h u^\ep\|_{H_{tan}^m}^2
+\|\partial_h  \omega_h^{u^\ep}\|_{H_{tan}^{m-1}}^2\right)
+\varepsilon\left(\|\partial_3 u^\ep\|_{H_{tan}^m}^2+\|\partial_3 \omega_h^{u^\ep}\|_{H_{tan}^{m-1}}^2\right)
\le 0,
\end{equation}
one can apply the interpolation inequality
$(e.g.~ \|f\|_{H_{tan}^m}^2
\lesssim \|\Lambda^{-s }f\|_{H_{tan}^m}^{\frac{2}{1+s}}
         \|\partial_h f\|_{H_{tan}^m}^{\frac{2s}{1+s}})$ to obtain
\begin{equation}\label{energy-1}
\frac{d}{dt}\left(\|u^\ep\|_{H_{tan}^m}^2+\|\omega_h^{u^\ep}\|_{H_{tan}^{m-1}}^2\right)
+C_0^{-\frac{1}{s}}\left(\|u^\ep\|_{H_{tan}^m}^2+\|\omega_h^{u^\ep}\|_{H_{tan}^{m-1}}^2\right)^{1+\frac{1}{s}}\le 0,
\end{equation}
if one has the uniform bound
$
\|\Lambda^{-s }u^\ep\|_{L^2}^2+\|\Lambda^{-s }\omega_h^{u^\ep}\|_{L^2}^2 \le C_0.
$
The differential inequality \eqref{energy-1} yields the time decay rate
\begin{equation}\label{decay-3}
\|u^\ep(t)\|_{H_{tan}^m}^2+\|\omega_h^{u^\ep}(t)\|_{H_{tan}^{m-1}}^2
\lesssim C_0(1+t)^{-s},
\end{equation}
where the index $s \in \big(\frac{13}{14}, 1\big)$.
Similarly, one can apply this method again to establish
the time decay estimate for $u^0$ as follows:
\begin{equation}\label{decay-4}
\|u^0(t)\|_{H_{tan}^m}^2+\|\omega_h^{u^0}(t)\|_{H_{tan}^{m-1}}^2
\lesssim C_0(1+t)^{-s}.
\end{equation}
However, the decay rates \eqref{decay-3}-\eqref{decay-4}
are still not enough to obtain uniform  bound
for the quantity
$\ep \int_0^t \|\partial_3 u^\varepsilon\|_{L^2}
\|\partial_3 (u^\ep-u^0)\|_{L^2} d\tau$.
Fortunately, we can apply the decay rate \eqref{decay-3}
to obtain suitable time integral for the degeneracy term
$\varepsilon \int_0^t(1+\tau)^{\sigma}
   \|\partial_3 u^\ep \|_{H_{tan}^m}^2 d\tau \lesssim 1$,
where the parameter $\sigma \overset{def}{=}s-\frac{14s-13}{6(2-s)}$.
Finally, we will establish the important uniform bound of
quantity $\|(\Lambda^{-s }u^\ep, \Lambda^{-s }\omega_h^{u^\ep})\|_{L^2}^2 $.
The most tricky problem is to deal with the difficult term
$\int_{\mathbb{R}_+^3} \Lambda_h^{-s}(u^\ep \cdot \nabla \omega^{u^\ep}_h)\cdot \Lambda_h^{-s} \omega^{u^\ep}_h dx$ as we establish the uniform estimate for
$\|\Lambda^{-s }\omega_h^{u^\ep}\|_{L^2}$.
By using the uniform estimate \eqref{ns-estimate},
conormal version interpolation inequality \eqref{a5}
and some time integral of dissipative term,
one can establish the uniform estimate of $\int_{\mathbb{R}_+^3} \Lambda_h^{-s}(u^\ep \cdot \nabla \omega^{u^\ep}_h)\cdot \Lambda_h^{-s} \omega^{u^\ep}_h dx$
(see estimates \eqref{524} and \eqref{528} in detail).

\section{Global-in-time uniform estimate}\label{global-estimate}
	This section provides the proof of Theorem \ref{th1}.
	We use the notations $\omega^{u^\varepsilon}\overset{def}{=}\nabla \times u^\varepsilon$ and $\omega^{B^\varepsilon}\overset{def}{=}\nabla \times B	^\varepsilon$. Furthermore, for some vector field $v=(v_1,v_2,v_3)$, we denote $v_h=(v_1,v_2)$.
For every $m\ge 3$, we introduce the following energy functionals
	\begin{equation*}
	\begin{aligned}
		E_1(t)&\overset{def}{=}\sup\limits_{0\le \tau \le t} \Vert (u^\var,B^\varepsilon)(\tau) \Vert_{H^m_{tan}}^2
		+ 2 \int_{0}^{t} \Vert (\partial_h u^\var,\partial_3 B^\var)(\tau) \Vert_{H^m_{tan}}^2 d \tau
		+2\varepsilon \int_{0}^{t} \Vert (\partial_3 u^\var,\partial_h B^\var)(\tau) \Vert_{H^m_{tan}}^2 d \tau,\\
 	E_2(t)&\overset{def}{=}\sup\limits_{0\le \tau \le t}
 	\Vert (\omega^{u^\var}_h,\omega^{B^\var}_h)(\tau) \Vert_{H^{m-1}_{co}}^2
 + 2 \int_{0}^{t} \Vert (\partial_h \omega^{u^\varepsilon}_h,\partial_3 \omega^{B^\varepsilon}_h)(\tau) \Vert_{H^{m-1}_{co}}^2 d \tau\\
	&~~~~~~~~~~~~+2\varepsilon \int_{0}^{t} \Vert (\partial_3 \omega^{u^\varepsilon}_h,\partial_h \omega^{B^\varepsilon}_h)(\tau) \Vert_{H^{m-1}_{co}}^2 d \tau,
	\\
	E(t)&\overset{def}{=}E_1(t)+E_2(t),\\
	G(t)&\overset{def}{=} \int_{0}^{t} \Vert \partial_1 B^\var(\tau) \Vert_{H^{m-1}_{co}}^2 d \tau.
\end{aligned}
\end{equation*}	
\textbf{Proof of Theorem \ref{th1}.}
First of all, similar to the result in \cite{MR3472518},
one can establish the uniform (with respect to $\ep$)
local-in-time existence and uniqueness for system \eqref{eq1}.
Thus, we will extend the local-in-time solution to be global one
under the small condition of initial data.
Since we will use bootstrapping argument to give the proof
if the initial data satisfies some small initial condition,
it is convenient to assume the energy $E(t)\le 1$.
Thus, we claim that we can establish the following energy inequalities:
	\begin{align}\label{eq1-1}
		E_{1}(t) &\lesssim E(0) + E(0)^\frac{3}{2} + E(t)^\frac{3}{2}
		+ G(t)^\frac{3}{2}
+
E(t)^2
+ G(t)^2,\\
\label{eq1-0}
	E_{2}(t) &\lesssim E(0) + E(0)^\frac{3}{2} + E(t)^\frac{3}{2}
	+ G(t)^\frac{3}{2}+
	E(t)^2
	+ G(t)^2,
\end{align}
and
\begin{align}\label{eq1-2}
	G(t) \lesssim E(0) + E(t) + E(t)^\frac{3}{2} + G(t)^\frac{3}{2}.
\end{align}
%We employ the boostrapping argument (see, e.g. \cite{Tao2006}).
By \eqref{eq1-1}--\eqref{eq1-2}, we obtain
	\begin{equation*}
	\begin{aligned}
		E(t) + G(t) \lesssim E(0) + E(0)^\frac{3}{2} + E(t)^\frac{3}{2} + G(t)^\frac{3}{2}+
		E(t)^2
		+ G(t)^2,
	\end{aligned}
\end{equation*}
	or, for some pure constants $C_1$, $C_2$ and $C_3$, we obtain
	\begin{align}\label{eq99}
		E(t)+G(t) \le
		C_1 \big( E(0) + E(0)^\frac{3}{2} \big)
		+
		C_2 \big( E(t)^\frac{3}{2} + G(t)^\frac{3}{2} \big)
		+
		C_3 \big( E(t)^2 + G(t)^2 \big).
	\end{align}
	To use the bootstrapping argument(see, e.g., \cite{Tao2006}), we assume
	\begin{align}\label{eq100}
		E(t) + G(t) \le \min \bigg\{  \frac{1}{16 C_2^2}, \frac{1}{4C_3} \bigg\}.
	\end{align}
	We then show that \eqref{eq99} allows us to conclude that $E(t)+G(t)$ actually admits a smaller bound by taking the initial data $E(0)$ small enough. Actually, if \eqref{eq100} holds, \eqref{eq99} implies
	\begin{equation*}
	\begin{aligned}
		E(t)+G(t)
		&\le
		C_1 \big( E(0) + E(0)^\frac{3}{2} \big)
		+
		C_2
		\sqrt{E(t)+G(t)}
		\big( E(t) + G(t) \big)
		+C_3
		\big(
		E(t)+G(t)\big)
		\big(
		E(t)+G(t)\big)
		\\
		&\le C_1 \big( E(0) + E(0)^\frac{3}{2} \big)
		+
		\frac{1}{2}
		\big(
		E(t) + G(t)
		\big),
	\end{aligned}
\end{equation*}
	or
	\begin{align}\label{eq101}
		E(t) + G(t) \le 2C_1 \big(
		E(0)+E(0)^\frac{3}{2}
		\big).
	\end{align}
Hence, if $E(0)$ is small enough such that
\begin{align}\label{eq102}
	2C_1 \big(E(0) + E(0)^\frac{3}{2} \big) < \min \bigg\{  \frac{1}{16 C_2^2}, \frac{1}{4C_3} \bigg\},
\end{align}
then $E(t)+G(t)$ actually admits a smaller bound in \eqref{eq101} than the one in the ansatz \eqref{eq100}. The bootstrapping argument then assesses that \eqref{eq101} holds for all time when $E(0)$ satisfies \eqref{eq102}.
Let us define the norm
\begin{equation*}
\begin{aligned}
X(t)\overset{def}{=}
&\|(u^\ep , B^\ep) (t) \|_{H^m_{co}}^2
            +\| (\partial_3 u^\ep, \partial_3 B^\ep) (t) \|_{H^{m-1}_{co}}^2
			+\int_{0}^{t}
			\left(\|(\partial_h u^\ep, \partial_3 B^\ep) \|_{H^m_{co}}^2
			+\|(\partial_h \partial_3 u^\ep, \partial_3^2 B^\ep)\|_{H^{m-1}_{co}}^2\right)
            d \tau\\
           &+\varepsilon \int_{0}^{t}
           \left( \Vert (\partial_3 u^\varepsilon,\partial_h B^\varepsilon) \Vert_{H^m_{co}}^2
           +\Vert (\partial_3^2 u^\varepsilon,\partial_h \partial_3 B_h^\varepsilon) \Vert_{H^{m-1}_{co}}^2 \right) d \tau,
\end{aligned}
\end{equation*}
then it is easy to check that $X(t)$ is equivalent to $E(t)$.
In other words, there exist two constant $C_*$ and $C^*$ such that
\begin{equation}\label{equivalent-norm}
C_* E(t) \le X(t) \le C^* E(t),
\end{equation}
which can be proved in \eqref{b7} in Appendix \ref{claim-estimates}.
Therefore, the combination of estimate \eqref{eq101},
initial data condition \eqref{eq102} and equivalent norm
\eqref{equivalent-norm} complete the proof of estimate \eqref{uniform_estimate}
in Theorem \ref{th1}.
Finally, due to the Eq.$\eqref{eq1}_1$, Eq.$\eqref{eq1}_3$
and estimate \eqref{uniform_estimate}, it is easy to check that
\begin{equation}\label{31001}
\int_0^t (\|\partial_\tau u^\ep \|_{H^{m-1}_{co}}^2
+\|\partial_\tau B^\ep \|_{H^{m-1}_{co}}^2)d\tau \le C_1,
\end{equation}
with the positive constant $C_1$ independent of
time $t$ and parameter $\ep$.
On the other hand, the estimate \eqref{uniform_estimate} gives directly
\begin{equation}\label{31002}
\underset{0 \le \tau \le t}{\sup}
\|(u^\ep, B^\ep)(\tau)\|_{H^{m}_{co}}^2
+\underset{0 \le \tau \le t}{\sup}
\|\nabla(u^\ep, B^\ep)(\tau)\|_{H^{m-1}_{co}}^2 \le  C \delta_0.
\end{equation}
Therefore, the estimates \eqref{31001}, \eqref{31002}
and strong compactness argument (see \cite[Lemma 4]{MR1062395SIAM1990})
yield directly
\begin{equation*}
(u^\ep, B^\ep) \rightarrow (u^0, B^0) \text{~strongly~in~}
L^\infty(0, t; H^{m-1,co}_{loc}(\mathbb{R}_+^3)),
\end{equation*}
for any $t>0$. Therefore, we complete the proof of Theorem \ref{th1}.
\begin{comment}

Therefore, due to H${\rm \ddot{o}}$lder's inequality and Minkowski's inequality, we have
\begin{equation*}
	\begin{aligned}
\int_{\mathbb{R}_+^3} |fgh| dx
&\le
\Vert f \Vert_{L^2_{x_3} L^2_{x_2} L^\infty_{x_1}}
\Vert g \Vert_{L^2_{x_3} L^\infty_{x_2} L^2_{x_1}}
\Vert h \Vert_{L^\infty_{x_3} L^2_{x_2} L^2_{x_1}}
\\
&\le
\Vert f \Vert_{L^2_{x_3} L^2_{x_2} L^\infty_{x_1}}
\Vert g \Vert_{L^2_{x_3}  L^2_{x_1} L^\infty_{x_2}}
\Vert h \Vert_{ L^2_{x_2} L^2_{x_1} L^\infty_{x_3}}
\\
&\le
\bigg\Vert
\sqrt{2}
\Vert f \Vert_{L^2_{x_1}}^\frac{1}{2}
\Vert \partial_1 f \Vert_{L^2_{x_1}}^\frac{1}{2}
\bigg\Vert_{L^2_{x_2 x_3}}
\cdot
\bigg\Vert
\sqrt{2}
\Vert g \Vert_{L^2_{x_2}}^\frac{1}{2}
\Vert \partial_2 g \Vert_{L^2_{x_2}}^\frac{1}{2}
\bigg\Vert_{L^2_{x_1 x_3}}
\cdot
\bigg\Vert
\sqrt{2}
\Vert h \Vert_{L^2_{x_3}}^\frac{1}{2}
\Vert \partial_3 h \Vert_{L^2_{x_3}}^\frac{1}{2}
\bigg\Vert_{L^2_{x_1 x_2}}
\\
&\le 2^\frac{3}{2}
\Vert f \Vert_{L^2}^{\frac{1}{2}}
\Vert \partial_1 f \Vert_{L^2}^{\frac{1}{2}}
\Vert g \Vert_{L^2}^{\frac{1}{2}}
\Vert \partial_2 g \Vert_{L^2}^{\frac{1}{2}}
\Vert h \Vert_{L^2}^{\frac{1}{2}}
\Vert \partial_3 h \Vert_{L^2}^{\frac{1}{2}}.
	\end{aligned}
\end{equation*}
\end{comment}

\subsection{Estimate of tangential derivative}
In this section, we will give the proof for  estimate \eqref{eq1-1}.
%	\begin{equation*}
%		E_1(t) \lesssim E(0) + E(0)^\frac{3}{2} + E(t)^\frac{3}{2}
%		+ G(t)^\frac{3}{2}.
%\end{equation*}
For notational convenience, we drop the superscript $\ep$ throughout this section. Due to the equivalence of $\Vert (u,B)\Vert_{H^m_{tan}}$ with $\Vert (u,B)\Vert_{L^2} + \Vert (\partial_h^m u,\partial_h^m B)\Vert_{L^2}$, it suffices to bound the $L^2$-norm of $(u,B)$ and $(\partial_h^m u, \partial_h ^m B )$.
%
%
%It is obvious that
%\begin{align*}
%	\Vert u \Vert_{H^m_{tan}}^2 ~ \text{~is ~equivalent ~to}~ \Vert u
%	\Vert_{L^2}^2 + \Vert \partial_h^m u
%	\Vert_{L^2}^2
%	\quad
%	{\rm and}
%	\quad
%	\Vert B \Vert_{H^m_{tan}}^2 ~
%	\text{~is ~equivalent ~to}~ \Vert B
%	\Vert_{L^2}^2 + \Vert \partial_h^m B
%	\Vert_{L^2}^2.
%\end{align*}
%So we only concentrate on the $L^2$-energy estimate and $\dot{H}^m$-energy estimate of $(u,B)$.
First of all, the equations $\eqref{eq1}$ and $\nabla \cdot u =\nabla \cdot B=0$
imply the $L^2$-energy estimate as follows:
\begin{equation}\label{3124}
	\begin{aligned}
\frac{d}{dt}\Vert (u,B)(t) \Vert_{L^2}^2
+ 2 \Vert (\partial_h u,\partial_3 B) \Vert_{L^2}^2
+ 2 \varepsilon \Vert (\partial_3 u,\partial_h B)\Vert_{L^2}^2 = 0.
	\end{aligned}
\end{equation}	
Now we turn to the $L^2$-energy estimate of $(\partial_h^m u,\partial_h^m B)$.
	For every $m \ge 3$, applying $\partial_h^m$ to $\eqref{eq1}$ and dotting by $(\partial_h^m u,\partial_h^m B)$, we obtain
	\begin{equation}\label{eqE1}
		\begin{aligned}
			&~~~~~\frac{1}{2}\frac{d}{dt}  \Vert (\partial_h^m u,\partial_h^m B) \Vert_{L^2}^2
			+
			\Vert (\partial_h \partial_h^m u,\partial_3 \partial_h^m B) \Vert_{L^2}^2
			+ \varepsilon \Vert (\partial_3 \partial_h^m u,\partial_h \partial_h^m B) \Vert_{L^2}^2\\
			&~=-\sum\limits_{i=1}^2\int_{\mathbb{R}_+^3}
 [\partial_i^m,u\cdot\nabla]u\cdot \partial_i^mu dx
			+\sum\limits_{i=1}^2\int_{\mathbb{R}_+^3} [\partial_i^m,B\cdot\nabla]B\cdot \partial_i^mu dx\\
			&~~~~-\sum\limits_{i=1}^2\int_{\mathbb{R}_+^3} [\partial_i^m,u\cdot\nabla]B\cdot \partial_i^mB dx
			+\sum\limits_{i=1}^2\int_{\mathbb{R}_+^3} [\partial_i^m,B\cdot\nabla]u\cdot \partial_i^mB dx
			\\
			&\overset{def}{=}I_1+I_2+I_3+I_4.
		\end{aligned}
	\end{equation}
Due to the basic fact
\begin{equation*}
	\begin{aligned}
\left[\partial_i^m, u\cdot \nabla \right]
u = \partial_i^m(u \cdot \nabla u) - u \cdot \nabla \partial_i^m u
 = \sum\limits_{\substack{k+\ell=m\\k\neq0}} C_{k,\ell} \partial_i^k u \cdot \partial_i^\ell \nabla u ,~~\forall i=1,2,
\end{aligned}
\end{equation*}
we use Lemma \ref{lemma1} and $\nabla \cdot u=0$ to obtain
    \begin{equation}\label{3117}
	\begin{aligned}
    	I_1 &= -\sum\limits_{i=1}^2 \sum\limits_{\substack{k+\ell=m\\k\neq0}} C_{k,\ell}
    	\int_{\mathbb{R}_+^3} \partial_i^k u \cdot \partial_i^\ell \nabla u \cdot \partial_i^mu dx\\
    	&=-\sum\limits_{i=1}^2 \sum\limits_{\substack{k+\ell=m\\k\neq0}} C_{k,\ell}
    	\int_{\mathbb{R}_+^3} \partial_i^k u_h \cdot \partial_i^\ell \partial_h u \cdot \partial_i^mu dx
    	-\sum\limits_{i=1}^2 \sum\limits_{\substack{k+\ell=m\\k\neq0}} C_{k,\ell}
    	\int_{\mathbb{R}_+^3} \partial_i^k u_3  \partial_i^\ell \partial_3 u \cdot \partial_i^mu dx \\
    	&\lesssim \sum\limits_{\substack{k+\ell=m\\k\neq0}} \Vert \partial_h^k u_h \Vert_{L^2}^{\frac{1}{2}} \Vert \partial_1 \partial_h^k u_h \Vert_{L^2}^{\frac{1}{2}}
	\Vert \partial_h^\ell \partial_h u \Vert_{L^2}^{\frac{1}{2}} \Vert \partial_3 \partial_h^\ell \partial_h u \Vert_{L^2}^{\frac{1}{2}} \Vert \partial_h^m u \Vert_{L^2}^{\frac{1}{2}} \Vert \partial_2 \partial_h^m u \Vert_{L^2}^{\frac{1}{2}} \\
	&~~~~+ \sum\limits_{\substack{k+\ell=m\\k\neq0}} \Vert \partial_h^k u_3 \Vert_{L^2}^{\frac{1}{2}} \Vert \partial_3 \partial_h^k u_3 \Vert_{L^2}^{\frac{1}{2}}
	\Vert \partial_h^\ell \partial_3 u \Vert_{L^2}^{\frac{1}{2}} \Vert \partial_1 \partial_h^\ell \partial_3 u \Vert_{L^2}^{\frac{1}{2}} \Vert \partial_h^m u \Vert_{L^2}^{\frac{1}{2}} \Vert \partial_2 \partial_h^m u \Vert_{L^2}^{\frac{1}{2}} \\
&\lesssim
\Vert u \Vert_{H_{tan}^m}
\Vert \partial_h u \Vert_{H_{tan}^m}^\frac32
\Vert \partial_h \partial_3 u \Vert_{H_{tan}^{m-1}}^\frac12
+
\Vert u \Vert_{H_{tan}^m}^\frac12
\Vert \partial_3 u \Vert_{H_{tan}^{m-1}}^\frac12
\Vert \partial_h u \Vert_{H_{tan}^m}^\frac32
\Vert \partial_h \partial_3 u \Vert_{H_{tan}^{m-1}}^\frac12
\\
	&\lesssim \Vert u \Vert_{H_{tan}^m}
	\Vert \partial_h u \Vert_{H_{tan}^m}^\frac32
	(\Vert \partial_h \partial_h u \Vert_{H_{tan}^{m-1}}
	+
	\Vert \partial_h \omega_h^u \Vert_{H_{tan}^{m-1}}
	)^\frac12\\
	&~~~~+ \Vert u \Vert_{H_{tan}^m}^\frac12
	(\Vert \partial_h u \Vert_{H_{tan}^{m-1}}
	+
	\Vert \omega_h^u \Vert_{H_{tan}^{m-1}}
	)^\frac12
	\Vert \partial_h u \Vert_{H_{tan}^m}^\frac32
	(\Vert \partial_h \partial_h u \Vert_{H_{tan}^{m-1}}
	+
	\Vert \partial_h \omega_h^u \Vert_{H_{tan}^{m-1}}
	)^\frac12\\
		&\lesssim
\Vert u \Vert_{H_{tan}^m}^\frac12
	(\Vert u \Vert_{H_{tan}^{m}}
	+
	\Vert \omega_h^u \Vert_{H_{tan}^{m-1}}
	)^\frac12
	\Vert \partial_h u \Vert_{H_{tan}^m}^\frac32
	(\Vert \partial_h u \Vert_{H_{tan}^{m}}
	+
	\Vert \partial_h \omega_h^u \Vert_{H_{tan}^{m-1}}
	)^\frac12\\
&\lesssim
 ( \Vert u \Vert_{H_{tan}^m}+\Vert \omega_h^u \Vert_{H_{tan}^{m-1}} )
 (\Vert \partial_h u \Vert_{H_{tan}^m}^2
  +\Vert \partial_h \omega_h^u \Vert^{2}_{H_{tan}^{m-1}}).
\end{aligned}
\end{equation}
Similarly, it is easy to check that
\begin{equation}\label{3118}
\begin{aligned}
I_2 %= -\sum\limits_{i=1}^{2} \sum\limits_{\substack{k+\ell=m\\k\neq0}} C_{k,\ell}
&\lesssim
	\Vert u \Vert_{H_{tan}^m} ^{\frac{1}{2}} \Vert B \Vert_{H_{tan}^m} ^{\frac{1}{2}} \Vert \partial_h u \Vert_{H_{tan}^m} ^{\frac{1}{2}} \Vert \partial_3 B \Vert_{H_{tan}^m} ^{\frac{3}{2}}
	+
	\Vert B \Vert_{H^m_{tan}} \Vert \partial_h u \Vert_{H^m_{tan}}
	\Vert \partial_3 B \Vert_{H^m_{tan}}^{\frac{1}{2}} \Vert \partial_1 B \Vert_{H^{m-1}_{tan}}^{\frac{1}{2}}\\
&\lesssim
\Vert (u, B)\Vert_{H^m_{tan}}
(\Vert (\partial_h u, \partial_3 B)\Vert_{H^m_{tan}}^2
   +\Vert \partial_1 B \Vert_{H^{m-1}_{tan}}^{2}).
\end{aligned}
\end{equation}
Now, let us deal with the difficult term $I_3$
due to the disappearance of dissipative structure of magnetic field
in the $x_2$ direction.
Thus, one splits the term $I_3$ into three terms as follows:
\begin{equation*}
	\begin{aligned}
	&I_3= -\sum\limits_{i=1}^2 \sum\limits_{\substack{k+\ell=m\\k\neq0}} C_{k,\ell}
	\int_{\mathbb{R}_+^3} \partial_i^k u \cdot \partial_i^\ell \nabla B \cdot \partial_i^m B dx\\
	&~~~=-\sum\limits_{i=1}^2 \sum\limits_{\substack{k+\ell=m\\k\neq0}} C_{k,\ell}
	\int_{\mathbb{R}_+^3} \partial_i^k u_1  \partial_i^\ell \partial_1 B \cdot \partial_i^m B dx -\sum\limits_{i=1}^2 \sum\limits_{\substack{k+\ell=m\\k\neq0}} C_{k,\ell}
	\int_{\mathbb{R}_+^3} \partial_i^k u_2 \partial_i^\ell \partial_2 B \cdot \partial_i^m B dx \\
	&~~~~~~-\sum\limits_{i=1}^2 \sum\limits_{\substack{k+\ell=m\\k\neq0}} C_{k,\ell}
	\int_{\mathbb{R}_+^3} \partial_i^k u_3  \partial_i^\ell \partial_3 B \cdot \partial_i^m B dx \\
	&~~\overset{def}{=}I_{3,1}+I_{3,2}+I_{3,3}.
\end{aligned}
\end{equation*}
By Lemma \ref{lemma1}, we obtain
\begin{equation}\label{3101}
	\begin{aligned}
	I_{3,1}&=-\sum\limits_{i=1}^2 \sum\limits_{\substack{k+\ell=m\\k\neq0,\ell \neq 0}} C_{k,\ell}
	\int_{\mathbb{R}_+^3} \partial_i^k u_1  \partial_i^\ell \partial_1 B \cdot \partial_i^m B dx
	-
	\sum\limits_{i=1}^2 \int_{\mathbb{R}_+^3} \partial_i^m u_1  \partial_1 B \cdot \partial_i^m B dx \\
	&\lesssim
\sum\limits_{\substack{k+\ell=m\\k\neq0,\ell \neq 0}} \Vert \partial_h^k u_1 \Vert_{L^2}^{\frac{1}{4}}
	\Vert \partial_1 \partial_h^k u_1 \Vert_{L^2}^{\frac{1}{4}}
	\Vert \partial_2 \partial_h^k u_1 \Vert_{L^2}^{\frac{1}{4}}
	\Vert \partial_1 \partial_2 \partial_h^k u_1 \Vert_{L^2}^{\frac{1}{4}}
	\Vert \partial_h^\ell \partial_1 B \Vert_{L^2}
	\Vert \partial_h^m B \Vert_{L^2}^{\frac{1}{2}}
	\Vert \partial_3 \partial_h^m B \Vert_{L^2}^{\frac{1}{2}}
	\\
	&~~~~+ \Vert \partial_h^m u_1 \Vert_{L^2}^{\frac{1}{2}}
	\Vert \partial_1 \partial_h^m u_1 \Vert_{L^2}^{\frac{1}{2}}
	\Vert \partial_1 B \Vert_{L^2}^{\frac{1}{2}}
	\Vert \partial_2 \partial_1 B  \Vert_{L^2}^{\frac{1}{2}}
	\Vert \partial_h^m B \Vert_{L^2}^{\frac{1}{2}}
	\Vert \partial_3 \partial_h^m B \Vert_{L^2}^{\frac{1}{2}} \\
%	&\lesssim
%	\Vert u \Vert_{H^m_{tan}}^\frac12
%	\Vert B \Vert_{H^m_{tan}}^\frac12
%	\Vert \partial_h u \Vert_{H^m_{tan}}
%	\Vert \partial_3 B \Vert_{H^m_{tan}}^{\frac{1}{2}}
%	\Vert \partial_1 B \Vert_{H^{m-1}_{tan}}^{\frac{1}{2}}
%	\\
	&\lesssim
	\Vert (u,B) \Vert_{H^m_{tan}}
	(
	\Vert (\partial_h u,\partial_3 B) \Vert_{H^m_{tan}}^2
	+
	\Vert \partial_1 B \Vert_{H^{m-1}_{tan}}^2
	).
\end{aligned}
\end{equation}
The estimate of $I_{3,3}$ is easy since $I_{3,3}$ involves dissipation term $\partial_3 B$. Thus, by Lemma \ref{lemma1}, we obtain
\begin{equation}\label{3102}
	\begin{aligned}
	I_{3,3} & \lesssim \sum\limits_{\substack{k+\ell=m\\k\neq0}} \Vert \partial_h^k u_3 \Vert_{L^2}^{\frac{1}{2}}
	\Vert \partial_1 \partial_h^k u_3 \Vert_{L^2}^{\frac{1}{2}}
	\Vert \partial_h^\ell \partial_3 B \Vert_{L^2}^{\frac{1}{2}}
	\Vert \partial_2 \partial_h^\ell \partial_3 B \Vert_{L^2}^{\frac{1}{2}}
	\Vert \partial_h^m B \Vert_{L^2}^{\frac{1}{2}}
	\Vert \partial_3 \partial_h^m B \Vert_{L^2}^{\frac{1}{2}}\\
	&\lesssim \Vert u \Vert_{H^m_{tan}}^{\frac{1}{2}}
	\Vert B \Vert_{H^m_{tan}}^{\frac{1}{2}}
	\Vert \partial_h u \Vert_{H^m_{tan}}^{\frac{1}{2}}
	\Vert \partial_3 B \Vert_{H^m_{tan}}^{\frac{3}{2}}
	\\
	&\lesssim
	\Vert (u,B) \Vert_{H^m_{tan}}
	\Vert (\partial_h u,\partial_3 B) \Vert_{H^m_{tan}}^2.
\end{aligned}
\end{equation}
Due to the disappearance of dissipative structure of magnetic field
in the $x_2$ direction, we should deal with $I_{3,2}$ term carefully.
\begin{equation*}
	\begin{aligned}
	I_{3,2}&=-\sum\limits_{i=1}^2 \sum\limits_{\substack{k+\ell=m\\k\neq0,1,m}} C_{k,\ell}
	\int_{\mathbb{R}_+^3} \partial_i^k u_2 \partial_i^\ell \partial_2 B \cdot \partial_i^m B dx \\
	&~~~~
	-m\sum\limits_{i=1}^2\int_{\mathbb{R}_+^3} \partial_i u_2 \partial_i^{m-1} \partial_2 B \cdot \partial_i^m B dx
	- \sum\limits_{i=1}^2 \int_{\mathbb{R}_+^3} \partial_i^m u_2 \partial_2 B \cdot \partial_i^m B dx \\
	&\overset{def}{=}I_{3,2,1} + I_{3,2,2} + I_{3,2,3}.
\end{aligned}
\end{equation*}
By Lemma \ref{lemma1}, it holds
\begin{equation}\label{3105}
	\begin{aligned}
	 I_{3,2,1} & \lesssim  \sum\limits_{\substack{k+\ell=m\\k\neq0,1,m}}
	 \Vert \partial_h^k u_2 \Vert_{L^2}^{\frac{1}{2}}
	 \Vert \partial_2 \partial_h^k u_2 \Vert_{L^2}^{\frac{1}{2}}
	 \Vert \partial_h^\ell \partial_2 B \Vert_{L^2}^{\frac{1}{2}}
	 \Vert \partial_1 \partial_h^\ell \partial_2 B \Vert_{L^2}^{\frac{1}{2}}
	 \Vert \partial_h^m B \Vert_{L^2}^{\frac{1}{2}}
	 \Vert \partial_3 \partial_h^m B \Vert_{L^2}^{\frac{1}{2}}\\
	 &\lesssim \Vert B \Vert_{H^m_{tan}}
	 \Vert \partial_h u \Vert_{H^m_{tan}}
	 \Vert \partial_3 B \Vert_{H^m_{tan}}^{\frac{1}{2}}
	 \Vert \partial_1 B \Vert_{H^{m-1}_{tan}}^{\frac{1}{2}}\\
	 &
	 \lesssim
	 \Vert B \Vert_{H^m_{tan}}
	 (\Vert (\partial_h u, \partial_3 B)\Vert_{H^m_{tan}}^2
	 +
	 \Vert \partial_1 B \Vert_{H^{m-1}_{tan}}^2
	 ),
\end{aligned}
\end{equation}
and
\begin{equation}\label{3106}
	\begin{aligned}
	I_{3,2,3} & \lesssim
	\Vert \partial_h^m u_2 \Vert_{L^2}^{\frac{1}{2}}
	\Vert \partial_2 \partial_h^m u_2 \Vert_{L^2}^{\frac{1}{2}}
	\Vert \partial_2 B \Vert_{L^2}^{\frac{1}{2}}
	\Vert \partial_1 \partial_2 B \Vert_{L^2}^{\frac{1}{2}}
	\Vert \partial_h^m B \Vert_{L^2}^{\frac{1}{2}}
	\Vert \partial_3 \partial_h^m B \Vert_{L^2}^{\frac{1}{2}}\\
	&\lesssim
	\Vert B \Vert_{H^m_{tan}}
	\Vert \partial_h u \Vert_{H^m_{tan}}
	\Vert \partial_3 B \Vert_{H^m_{tan}}
	 ^{\frac{1}{2}}
	 \Vert \partial_1 B \Vert_{H^{m-1}_{tan}}
	 ^{\frac{1}{2}}
	 \\
	 &\lesssim
	 \Vert B \Vert_{H^m_{tan}}
	 (\Vert (\partial_h u, \partial_3 B)\Vert_{H^m_{tan}}^2
	 +
	 \Vert \partial_1 B \Vert_{H^{m-1}_{tan}}^2
	 ).
\end{aligned}
\end{equation}
Due to the dissipation structure of magnetic field in the $x_1$
and $x_3$ direction, we apply the divergence-free condition $\nabla \cdot u=0$
to control the difficult term $\int_{\mathbb{R}_+^3} \partial_2 u_2 |\partial_2^m B|^2 dx$.
Thus, we have
\begin{equation*}
	\begin{aligned}
	I_{3,2,2}
	&=-m \int_{\mathbb{R}_+^3} \partial_1 u_2 \partial_1^{m-1} \partial_2 B \cdot \partial_1 ^m B dx
	-m\int_{\mathbb{R}_+^3} \partial_2 u_2 |\partial_2^m B|^2 dx
	\\
	&=-m \int_{\mathbb{R}_+^3} \partial_1 u_2 \partial_1^{m-1} \partial_2 B \cdot \partial_1 ^m B dx
	+m\int_{\mathbb{R}_+^3} \partial_1 u_1 |\partial_2^m B|^2 dx
	+m\int_{\mathbb{R}_+^3} \partial_3 u_3 |\partial_2^m B|^2 dx
	\\
	&\overset{def}{=}I_{3,2,2,1}+I_{3,2,2,2}+I_{3,2,2,3}.
\end{aligned}
\end{equation*}
By Lemma \ref{lemma1}, we obtain
\begin{equation}\label{3107}
	\begin{aligned}
	I_{3,2,2,1} & \lesssim
	\Vert \partial_1 u_2\Vert_{L^2}^{\frac{1}{4}}
	\Vert \partial_1 \partial_1 u_2\Vert_{L^2}^{\frac{1}{4}}
	\Vert \partial_2 \partial_1 u_2\Vert_{L^2}^{\frac{1}{4}}
	\Vert \partial_1 \partial_2 \partial_1 u_2\Vert_{L^2}^{\frac{1}{4}}
	\Vert \partial_1^{m-1} \partial_2 B \Vert_{L^2}
	\Vert \partial_1^m B \Vert_{L^2}^{\frac{1}{2}}
	\Vert \partial_3 \partial_1^m B \Vert_{L^2}^{\frac{1}{2}}\\
	&\lesssim
	\Vert B \Vert_{H^m_{tan}}
	(\Vert (\partial_h u, \partial_3 B)\Vert_{H^m_{tan}}^2
	+\Vert \partial_1 B\Vert_{H^{m-1}_{tan}}^2
	).
\end{aligned}
\end{equation}
Similarly, by integration by parts and Lemma \ref{lemma1}, it holds
\begin{equation}\label{3108}
	\begin{aligned}
	I_{3,2,2,3} & = -2m \int_{\mathbb{R}_+^3} u_3 \partial_2^m B \cdot \partial_3 \partial_2^m B dx \\
	&\lesssim \Vert u_3 \Vert_{L^2}^{\frac{1}{4}}
	\Vert \partial_1 u_3 \Vert_{L^2}^{\frac{1}{4}}
	\Vert \partial_2 u_3 \Vert_{L^2}^{\frac{1}{4}}
	\Vert \partial_1 \partial_2 u_3 \Vert_{L^2}^{\frac{1}{4}}
	\Vert \partial_2^m B \Vert_{L^2}^{\frac{1}{2}}
	\Vert \partial_3 \partial_2^m B \Vert_{L^2}^{\frac{1}{2}}
	\Vert \partial_3 \partial_2^m B \Vert_{L^2}\\
	&\lesssim \Vert(u, B)\Vert_{H^m_{tan}}
	\Vert (\partial_h u, \partial_3 B)\Vert_{H^m_{tan}}^2.
\end{aligned}
\end{equation}
By using the appearance of special term $\partial_1 u$ in $\eqref{eq1}_3$,
we can deduce the relation
\begin{equation*}\label{a01}
	\begin{aligned}
		I_{3,2,2,2}&=m\int_{\mathbb{R}_+^3} \partial_1 u_1 |\partial_2^m B|^2 dx
		=m \int_{\mathbb{R}_+^3} (\partial_t B_1 + u \cdot \nabla B_1 - \partial_3^2 B_1 - \varepsilon \Delta_h B_1 - B\cdot \nabla u_1) |\partial_2^m B|^2 dx \\
		&=
		m
		\frac{d}{dt} \int_{\mathbb{R}_+^3} B_1 |\partial_2^m B|^2 dx
		- 2m \int_{\mathbb{R}_+^3} B_1 \partial_2^m (-u\cdot \nabla B+ \partial_3^2 B + \varepsilon \Delta_h B + B \cdot \nabla u +\partial_1 u) \cdot \partial_2^m B dx
		\\
		&~~~~+m \int_{\mathbb{R}_+^3}
		(u \cdot \nabla B_1 - \partial_3^2 B_1 - \varepsilon \Delta_h B_1 - B\cdot \nabla u_1) |\partial_2^m B|^2 dx.
	\end{aligned}
\end{equation*}
Then, using Lemma \ref{lemma1} repeatedly, it is easy to check that
\begin{equation}\label{3103}
\begin{aligned}
	I_{3,2,2,2}
&
\lesssim
 \frac{d}{dt} \int_{\mathbb{R}_+^3} B_1 |\partial_2^m B|^2 dx
+
\varepsilon (\Vert B \Vert_{H^m_{tan}}
+
\Vert \omega_h^B \Vert_{H^{m-1}_{tan}}
)
\Vert \partial_h B \Vert_{H^m_{tan}}^2
\\
&~~~~
+
(
\Vert B \Vert_{H^m_{tan}}
+
\Vert \omega_h^B \Vert_{H^{m-1}_{tan}}
)
(
\Vert (\partial_h u,\partial_3 B) \Vert_{H^m_{tan}}^2
+
\Vert \partial_1 B \Vert_{H^{m-1}_{tan}}^2
)\\
&~~~~
+
(
\Vert (u,B) \Vert^2_{H^m_{tan}}
+
\Vert \omega_h^u \Vert^2_{H^{m-1}_{tan}}
)
(
\Vert (\partial_h u,\partial_3 B) \Vert^2_{H^m_{tan}}
+
\Vert (\partial_h \omega_h^u,\partial_1 B) \Vert^2_{H^{m-1}_{tan}}
).
	\end{aligned}
\end{equation}
The combination of estimates \eqref{3107}, \eqref{3108}
and \eqref{3103} yields directly
\begin{equation*}
\begin{aligned}
I_{3,2,2}
&
\lesssim
\frac{d}{dt} \int_{\mathbb{R}_+^3} B_1 |\partial_2^m B|^2 dx
+
\varepsilon
(\Vert B \Vert_{H^m_{tan}}
+
\Vert \omega_h^B \Vert_{H^{m-1}_{tan}})
\Vert \partial_h B \Vert_{H^m_{tan}}^2
\\
&~~~~+
(\Vert (u,B) \Vert_{H^m_{tan}}+\Vert \omega_h^B \Vert_{H^{m-1}_{tan}})
(\Vert (\partial_h u, \partial_3 B) \Vert_{H^m_{tan}}^2
+
\Vert \partial_1 B \Vert_{H^{m-1}_{tan}}^2
)\\
&~~~~
+
(
\Vert (u,B) \Vert_{H^m_{tan}}^2
+
\Vert \omega_h^u \Vert_{H^{m-1}_{tan}}^2
)
(
\Vert (\partial_h u,\partial_3 B) \Vert_{H^m_{tan}}^2
+
\Vert (\partial_h \omega_h^u, \partial_1 B) \Vert_{H^{m-1}_{tan}}^2
),
	\end{aligned}
\end{equation*}
which, together with estimates \eqref{3105} and \eqref{3106}, yields directly
\begin{equation}\label{3104}
\begin{aligned}
I_{3,2}
\lesssim
& \frac{d}{dt} \int_{\mathbb{R}_+^3} B_1 |\partial_2^m B|^2 dx
		+
		\varepsilon (\Vert B \Vert_{H^m_{tan}}+\Vert \omega_h^B \Vert_{H^{m-1}_{tan}})
		\Vert \partial_h B \Vert_{H^m_{tan}}^2\\
&+
(\Vert(u,B)\Vert_{H^m_{tan}}+\Vert \omega_h^B \Vert_{H^{m-1}_{tan}})
(\Vert(\partial_h u, \partial_3 B)\Vert_{H^m_{tan}}^2
 +\Vert \partial_1 B\Vert_{H^{m-1}_{tan}}^2)\\
 &
 +
 (
 \Vert (u,B) \Vert_{H^m_{tan}}^2
 +
 \Vert \omega_h^u \Vert_{H^{m-1}_{tan}}^2
 )
 (
 \Vert (\partial_h u,\partial_3 B) \Vert_{H^m_{tan}}^2
 +
 \Vert (\partial_h \omega_h^u, \partial_1 B) \Vert_{H^{m-1}_{tan}}^2
 ).
	\end{aligned}
\end{equation}
Summarizing the estimates  of $I_{3,1}$ through $I_{3,3}$
in \eqref{3101}, \eqref{3102} and \eqref{3104}, we have
\begin{equation}\label{3109}
\begin{aligned}
I_3\lesssim
& \frac{d}{dt} \int_{\mathbb{R}_+^3} B_1 |\partial_2^m B|^2 dx
		+
		\varepsilon (\Vert B \Vert_{H^m_{tan}}+\Vert \omega_h^B \Vert_{H^{m-1}_{tan}})
		\Vert \partial_h B \Vert_{H^m_{tan}}^2\\
&+
(\Vert(u,B)\Vert_{H^m_{tan}}+\Vert \omega_h^B \Vert_{H^{m-1}_{tan}})
(\Vert(\partial_h u, \partial_3 B)\Vert_{H^m_{tan}}^2
 +\Vert \partial_1 B\Vert_{H^{m-1}_{tan}}^2)\\
 &
 +
 (
 \Vert (u,B) \Vert_{H^m_{tan}}^2
 +
 \Vert \omega_h^u \Vert_{H^{m-1}_{tan}}^2
 )
 (
 \Vert (\partial_h u,\partial_3 B) \Vert_{H^m_{tan}}^2
 +
 \Vert (\partial_h \omega_h^u, \partial_1 B) \Vert_{H^{m-1}_{tan}}^2
 ).
	\end{aligned}
\end{equation}
Similarly, we use the Eq.\eqref{eq1} repeatedly to obtain
\begin{equation}\label{3119}
\begin{aligned}
	I_4
		\lesssim&
        \frac{d}{dt} \int_{\mathbb{R}_+^3} B_1 |\partial_2^m B_1|^2 dx
        +
        \frac{d}{dt}\int_{\mathbb{R}_+^3} \partial_2^m B_1 B_3 \partial_2^m B_3 dx
        +\varepsilon
        (\Vert B \Vert_{H^m_{tan}}
        +
        \Vert \omega_h^B \Vert_{H^{m-1}_{tan}})
        \Vert \partial_h B \Vert_{H^m_{tan}}^2\\
&+
(\Vert (u,B)\Vert_{H^m_{tan}}+\Vert (\omega_h^u,\omega_h^B) \Vert_{H^{m-1}_{tan}})
(\Vert (\partial_h u,\partial_3 B )\Vert_{H^m_{tan}}^2
+\Vert (\partial_h \omega_h^u, \partial_1 B) \Vert_{H^{m-1}_{tan}}^2)
\\
&
+
(
\Vert (u,B) \Vert_{H^m_{tan}}^2
+
\Vert \omega_h^u \Vert_{H^{m-1}_{tan}}^2
)
(
\Vert (\partial_h u,\partial_3 B) \Vert_{H^m_{tan}}^2
+
\Vert (\partial_h \omega_h^u, \partial_1 B) \Vert_{H^{m-1}_{tan}}^2
),
\end{aligned}
\end{equation}
Integrating \eqref{3124} and \eqref{eqE1} in time, we obtain
\begin{align}\label{e1}
	E_{1}(t) \lesssim E(0)+\int_{0}^t \left(
	I_1(\tau)+I_2(\tau)+I_3(\tau)+I_4(\tau)
	\right) d \tau.
\end{align}
Applying the estimates \eqref{3117} and \eqref{3118}, then it holds
\begin{equation}\label{3120}
\begin{aligned}
\!\!\int_{0}^t \!\!I_1(\tau) d \tau
		&\lesssim
		\!\!\sup_{0 \le \tau \le t}
		(\Vert u(\tau) \Vert_{H_{tan}^m}
		\!+\!\Vert \omega_h^u(\tau) \Vert_{H_{tan}^{m-1}})
		\!\! \int_{0}^{t}\!\!
		(\Vert \partial_h u(\tau) \Vert^2_{H_{tan}^m}
          \!+\!\Vert \partial_h \omega_h^u(\tau) \Vert_{H_{tan}^{m-1}}^2)d \tau
		\lesssim E(t)^\frac{3}{2},
	\end{aligned}
\end{equation}
and
\begin{equation}\label{3121}
\begin{aligned}
\int_{0}^{t}  I_2(\tau) d\tau
\lesssim
		\sup_{0 \le \tau \le t}
		\Vert (u, B)(\tau)\Vert_{H^m_{tan}}
		\int_{0}^{t} (\Vert (\partial_h u, \partial_3 B)(\tau)\Vert_{H^m_{tan}}^2  +
        \Vert \partial_1 B(\tau) \Vert_{H^{m-1}_{tan}}^{2}) d\tau
\lesssim  E(t)^\frac{3}{2}+G(t)^\frac{3}{2}.
\end{aligned}
\end{equation}
Similarly, due to the estimates \eqref{3109} \eqref{3119}, we have
\begin{equation}\label{3122}
\int_{0}^{t} I_3 (\tau) d\tau + \int_{0}^{t} I_4 (\tau) d\tau
\lesssim E(0)^\frac{3}{2} + E(t)^\frac{3}{2}+ G(t)^\frac{3}{2}
+E(t)^2+G(t)^2,
\end{equation}
where, by Lemma \ref{lemma1},  we have used the following fact
\begin{equation*}
	\begin{aligned}
		&
		\left|
		\int_{0}^{t} \frac{d}{d\tau}
		\int_{\mathbb{R}_+^3} B_1 |\partial_2^m B|^2 dx d\tau \right|
		+
		\left|
		\int_{0}^{t} \frac{d}{d\tau}
		\int_{\mathbb{R}_+^3} B_1 |\partial_2^m B_1|^2 dx d\tau \right|
		+
         \left| \int_{0}^{t} \frac{d}{d\tau}
         \int_{\mathbb{R}_+^3} \partial_2^m B_1 B_3 \partial_2^m B_3 dx d\tau \right|
         \\
		\lesssim
		&\Vert B(0) \Vert_{L^\infty}
		\Vert \partial_2^m B(0) \Vert^2_{L^2}
		+
		\Vert B(t) \Vert_{L^\infty}
		\Vert \partial_2^m B(t) \Vert^2_{L^2}
		\\
		\lesssim&
(
		\Vert B(0) \Vert_{H^m_{tan}}
		+
		\Vert \omega_h^B(0) \Vert_{H^{m-1}_{tan}}
)
		\Vert B(0) \Vert_{H^m_{tan}}^2
+
(
		\Vert B(t) \Vert_{H^m_{tan}}
		+
		\Vert \omega_h^B(t) \Vert_{H^{m-1}_{tan}}
)
		\Vert B(t) \Vert_{H^m_{tan}}^2\\
		\lesssim &
		E(0)^\frac{3}{2}
		+E(t)^\frac{3}{2}.
	\end{aligned}
\end{equation*}
Substituting the estimates \eqref{3120}-\eqref{3122} into
\eqref{e1}, then we have
\begin{equation*}
	E_{1}(t)
   \lesssim E(0)+E(0)^\frac{3}{2}+E(t)^\frac{3}{2}+G(t)^\frac{3}{2}+E(t)^2+G(t)^2.
\end{equation*}
Therefore, we complete the proof of estimate \eqref{eq1-1}.
		
\subsection{Estimate of normal derivative}\label{Estimateofnormalderivative}
	
In this subsection, we will give the proof for estimate \eqref{eq1-0}.
%\begin{equation*}
%	\begin{aligned}
%		E_2(t) \lesssim E(0) + E(0)^\frac{3}{2} + E(t)^\frac{3}{2}
%		+ G(t)^\frac{3}{2}.
%	\end{aligned}
%\end{equation*}
For notational convenience, we drop the superscript $\ep$ throughout this section.
Applying $\nabla \times $ differential operator to $\eqref{eq1}_1$
and $\eqref{eq1}_3$ respectively, we obtain
		\begin{equation}\label{eq2}
			\left\{ \begin{array}{*{2}{ll}}
				\partial_t \omega_h^u + u \cdot \nabla \omega_h^u -\Delta_h \omega_h^u -\varepsilon \partial_3^2 \omega_h^u = B\cdot \nabla \omega_h^B + \omega^u \cdot \nabla u_h - \omega^B \cdot \nabla B_h + \partial_1 \omega_h^B \quad  & {\rm in}~~\mathbb{R}_+^3,\\
				\partial_t \omega_h^B + u \cdot \nabla \omega_h^B -\varepsilon \Delta_h \omega_h^B -\partial_3^2 \omega_h^B =  \mathcal{A}(u,B)
				+
				B\cdot \nabla \omega_h^u
				+ \partial_1 \omega_h^u \quad & {\rm in}~~\mathbb{R}_+^3,
			\end{array}
		\right.
		\end{equation}
with the boundary condition
$$
\omega_h^u=0, \quad \omega_h^B=0 ~~ {\rm on}~~\mathbb{R}^2 \times \{x_3=0\}.
$$
Here the function $\mathcal{A}(u,B)$ is defined by
	\begin{equation}\label{3216}
		\mathcal{A}(u,B)\overset{def}{=}\begin{pmatrix}
			\partial_2 B \cdot \nabla u_3 -\partial_3 B \cdot \nabla u_2  -\partial_2 u \cdot \nabla B_3
			+\partial_3 u \cdot \nabla B_2 \\
			\partial_3 B \cdot \nabla u_1 -\partial_1 B \cdot \nabla u_3 -\partial_3 u \cdot \nabla B_1
			+\partial_1 u \cdot \nabla B_3
		\end{pmatrix}.
	\end{equation}
The $L^2$-estimate of $(\omega_h^u,\omega_h^B)$ implies that
\begin{equation}\label{eqL2}
	\begin{aligned}
		&~~~~\frac{1}{2}\frac{d}{dt} \| (\omega_h^u,\omega_h^B)\|^2_{L^2}
		+ \| (\partial_h w_h^u,\partial_3 w_h^B) \|^2_{L^2}
		+ \varepsilon \| (\partial_3 w_h^u,\partial_h w_h^B) \|^2_{L^2}
		\\
		&=
		\int_{\mathbb{R}_+^3}
		B\cdot \nabla \omega_h^B
		\cdot w_h^u dx
		 + \int_{\mathbb{R}_+^3} \omega^u \cdot \nabla u_h \cdot w_h^u dx
		  - \int_{\mathbb{R}_+^3} \omega^B \cdot \nabla B_h \cdot w_h^u dx
		   \\
		   &~~~~+
		   \int_{\mathbb{R}_+^3} \mathcal{A}(u,B) \cdot \omega_h^B dx
		   +
		   \int_{\mathbb{R}_+^3} B \cdot \nabla \omega_h^u \cdot \omega_h^B dx
		   \\
		   &\overset{def}{=}II_1+II_2+II_3+II_4+II_5,
	\end{aligned}
\end{equation}
where we have used the basic fact
\begin{equation*}
	\int_{\mathbb{R}_+^3} \omega_h^u \cdot \partial_1  \omega_h^B dx
	+
	\int_{\mathbb{R}_+^3} \partial_1 \omega_h^u \cdot \omega_h^B dx
	=0.
\end{equation*}
We split $II_1$ into the following terms:
\begin{equation*}
	\begin{aligned}
		II_1=
		\int_{\mathbb{R}_+^3} B_1 \partial_1 \omega_h^B \cdot \omega_h^u dx
		+
		\int_{\mathbb{R}_+^3} B_2 \partial_2 \omega_h^B \cdot \omega_h^u dx
		+
		\int_{\mathbb{R}_+^3} B_3 \partial_3 \omega_h^B \cdot \omega_h^u dx
		\overset{def}{=}
		II_{1,1}+II_{1,2}+II_{1,3}.
	\end{aligned}
\end{equation*}
By Lemma \ref{lemma1}, we have
\begin{equation*}
	\begin{aligned}
		II_{1,1}
		&=
		\int_{\mathbb{R}_+^3} B_1 \partial_1 \omega_1^B  \omega_1^u dx
		+
		\int_{\mathbb{R}_+^3} B_1 \partial_1 \omega_2^B  \omega_2^u dx
		\\
		&=\int_{\mathbb{R}_+^3}
		B_1 \partial_{12}^2 B_3 \omega^u_1 dx
		- \int_{\mathbb{R}_+^3}
		B_1 \partial_{13}^2 B_2 \omega^u_1 dx
		+
		\int_{\mathbb{R}_+^3}
		B_1 \partial_{13}^2 B_1 \omega^u_2 dx
		- \int_{\mathbb{R}_+^3}
		B_1 \partial_1^2 B_3 \omega^u_2 dx\\
		&\lesssim
		\| B_1 \|^\frac{1}{4}_{L^2}
		\| \partial_1 B_1 \|^\frac{1}{4}_{L^2}
		\| \partial_3 B_1 \|^\frac{1}{4}_{L^2}
		\| \partial_1 \partial_3 B_1 \|^\frac{1}{4}_{L^2}
		\| \partial_{12}^2 B_3 \|_{L^2}
		\| \omega_1^u \|^\frac{1}{2}_{L^2}
		\| \partial_2 \omega_1^u \|^\frac{1}{2}_{L^2}
		\\
		&~~~~
		+\| B_1 \|^\frac{1}{4}_{L^2}
		\| \partial_1 B_1 \|^\frac{1}{4}_{L^2}
		\| \partial_3 B_1 \|^\frac{1}{4}_{L^2}
		\| \partial_1 \partial_3 B_1 \|^\frac{1}{4}_{L^2}
		\| \partial_{13}^2 B_2 \| _{L^2}
		\| \omega_1^u \|^\frac{1}{2}_{L^2}
		\| \partial_2 \omega_1^u \|^\frac{1}{2}_{L^2}
		\\
		&~~~~+
		\Vert B_1 \Vert^\frac{1}{4}_{L^2}
		\Vert \partial_1 B_1 \Vert^\frac{1}{4}_{L^2}
		\Vert \partial_3 B_1 \Vert^\frac{1}{4}_{L^2}
		\Vert \partial_1 \partial_3 B_1 \Vert^\frac{1}{4}_{L^2}
		\| \partial_{13}^2 B_1 \|_{L^2}
		\| \omega_2^u \|^\frac{1}{2}_{L^2}
		\| \partial_2 \omega_2^u \|^\frac{1}{2}_{L^2}
		\\
		&~~~~
		+
		\Vert B_1 \Vert^\frac{1}{4}_{L^2}
		\Vert \partial_1 B_1 \Vert^\frac{1}{4}_{L^2}
		\Vert \partial_3 B_1 \Vert^\frac{1}{4}_{L^2}
		\Vert \partial_1 \partial_3 B_1 \Vert^\frac{1}{4}_{L^2}
		\| \partial_1^2 B_3 \| _{L^2}
		\| \omega_2^u \|^\frac{1}{2}_{L^2}
		\| \partial_2 \omega_2^u \|^\frac{1}{2}_{L^2}
		\\
		&
		\lesssim
		(\Vert B \Vert_{H^m_{co}}+\Vert \omega_h^u \Vert_{H^{m-1}_{co}})
        (\Vert \partial_3 B \Vert^2_{H^m_{co}}
		 +\Vert (\partial_h \omega_h^u, \partial_1 B)\Vert^2_{H^{m-1}_{co}}).
	\end{aligned}
\end{equation*}
Similarly, using integration by parts, $\nabla \cdot B=0$ and Lemma \ref{lemma1}, we have
\begin{equation*}
	\begin{aligned}
		II_{1,2}
		=&
		\int_{\mathbb{R}_+^3} B_2 \partial_2 \omega_1^B  \omega_1^u dx
		+
		\int_{\mathbb{R}_+^3} B_2 \partial_2 \omega_2^B  \omega_2^u dx
		\\
		=&\int_{\mathbb{R}_+^3}
		B_2 \partial_2^2 B_3 \omega^u_1 dx
		-
		\int_{\mathbb{R}_+^3}
		B_2 \partial_{23}^2 B_2 \omega^u_1 dx
		+ \int_{\mathbb{R}_+^3}
		B_2 \partial_{23}^2 B_1 \omega^u_2 dx
        - \int_{\mathbb{R}_+^3}
		B_2 \partial_{12}^2 B_3 \omega^u_2 dx\\
		=&
\int_{\mathbb{R}_+^3} \partial_1 B_1  \partial_2 B_3 \omega^u_1 dx
+
\int_{\mathbb{R}_+^3} \partial_3 B_3  \partial_2 B_3 \omega^u_1 dx
-\int_{\mathbb{R}_+^3} B_2  \partial_2 B_3 \partial_2 \omega^u_1 dx\\
		&-
		\int_{\mathbb{R}_+^3}
		B_2 \partial_{23}^2 B_2 \omega^u_1 dx
		+ \int_{\mathbb{R}_+^3}
		B_2 \partial_{23}^2 B_1 \omega^u_2 dx
        - \int_{\mathbb{R}_+^3}
		B_2 \partial_{12}^2 B_3 \omega^u_2 dx\\
		\lesssim&
		(\Vert B \Vert_{H^m_{co}}+\Vert \omega_h^u \Vert_{H^{m-1}_{co}})
        (\Vert \partial_3 B \Vert^2_{H^m_{co}}
		 +\Vert (\partial_h \omega_h^u, \partial_1 B)\Vert^2_{H^{m-1}_{co}}),
	\end{aligned}
\end{equation*}
and
\begin{equation*}
	\begin{aligned}
	II_{1,3}
	&\lesssim
	\Vert B_3 \Vert^\frac{1}{4}_{L^2}
	\Vert \partial_1 B_3 \Vert^\frac{1}{4}_{L^2}
	\Vert \partial_3 B_3 \Vert^\frac{1}{4}_{L^2}
	\Vert \partial_1 \partial_3 B_3 \Vert^\frac{1}{4}_{L^2}
	\Vert \partial_3 \omega_h^B \Vert _{L^2}
	\Vert \omega_h^u \Vert^\frac{1}{2}_{L^2}
	\Vert \partial_2 \omega_h^u \Vert^\frac{1}{2}_{L^2}
	\\
\begin{comment}
		&\lesssim
	\Vert B \Vert_{H_{co}^1}^\frac{1}{2}
	\Vert \omega_h^u \Vert^\frac{1}{2}_{L^2}
	\Vert \partial_3 B \Vert_{H_{co}^1}^\frac{1}{2}
	\Vert \partial_h \omega_h^u \Vert^\frac{1}{2}_{L^2}
	\Vert \partial_3 \omega_h^B \Vert_{L^2}
	\\
\end{comment}
	&\lesssim
	(\Vert B \Vert_{H_{co}^m}+\Vert \omega_h^u \Vert_{H_{co}^{m-1}})
	(\Vert \partial_3 B \Vert_{H_{co}^m}^2
	 +\Vert (\partial_h \omega_h^u,\partial_3 \omega_h^B )\Vert^2_{H_{co}^{m-1}}).
\end{aligned}
\end{equation*}
Summarizing the bounds for $II_{1,1}$, $II_{1,2}$ and $II_{1,3}$, we obtain
\begin{equation}\label{3201}
II_1 \lesssim
(\Vert B \Vert_{H_{co}^m}+\Vert \omega_h^u \Vert_{H_{co}^{m-1}})
	(\Vert \partial_3 B \Vert_{H_{co}^m}^2
+\Vert (\partial_h \omega_h^u,\partial_3 \omega_h^B, \partial_1 B )\Vert^2_{H_{co}^{m-1}}).
\end{equation}
Similarly, it is easy to check that
\begin{equation}\label{3202}
\begin{aligned}
II_2
&\lesssim (\Vert u \Vert_{H^m_{co}}
     +\Vert \omega_h^u \Vert_{H^{m-1}_{co}})
	(\Vert \partial_h u \Vert_{H^m_{co}}^2
	 +\Vert \partial_h \omega_h^u \Vert_{H^{m-1}_{co}}^2),\\
II_3
&\lesssim
(\Vert B \Vert_{H_{co}^m}+\Vert \omega_h^u \Vert_{H_{co}^{m-1}})
	(\Vert \partial_3 B \Vert_{H_{co}^m}^2
+\Vert (\partial_h \omega_h^u, \partial_1 B )\Vert^2_{H_{co}^{m-1}}),\\
II_4
&\lesssim
(\Vert (u, B)\Vert_{H_{co}^m}+\Vert (\omega_h^u, \omega_h^B) \Vert_{H_{co}^{m-1}})
	(\Vert (\partial_h u, \partial_3 B )\Vert_{H_{co}^m}^2
+\Vert (\partial_h \omega_h^u, \partial_3 \omega_h^B,\partial_1 B)\Vert^2_{H_{co}^{m-1}}).
\end{aligned}
\end{equation}
Using integration by parts and Lemma \ref{lemma1}, we have
\begin{equation}\label{3203}
	\begin{aligned}
	II_5	
	=&\int_{\mathbb{R}_+^3} B_h \cdot \partial_h \omega_h^u \cdot \omega_h^B dx
	+
	\int_{\mathbb{R}_+^3} B_3 \partial_3 \omega_h^u \cdot \omega_h^B dx
	\\
	=&
	\int_{\mathbb{R}_+^3} B_h \cdot \partial_h \omega_h^u \cdot \omega_h^B dx
	-
	\int_{\mathbb{R}_+^3} \partial_3 B_3  \omega_h^u \cdot \omega_h^B dx
	-
	\int_{\mathbb{R}_+^3} B_3  \omega_h^u \cdot \partial_3 \omega_h^B dx
	\\
	\lesssim&
	\Vert B_h \Vert^\frac{1}{4}_{L^2}
	\Vert \partial_1 B_h \Vert^\frac{1}{4}_{L^2}
	\Vert \partial_2 B_h \Vert^\frac{1}{4}_{L^2}
	\Vert \partial_1 \partial_2 B_h \Vert^\frac{1}{4}_{L^2}
	\Vert \partial_h \omega_h^u \Vert_{L^2}
	\Vert \omega_h^B \Vert^\frac{1}{2}_{L^2}
	\Vert \partial_3 \omega_h^B \Vert^\frac{1}{2}_{L^2}
	\\
	&
	+
	\Vert \partial_3 B_3 \Vert^\frac{1}{2}_{L^2}
	\Vert \partial_1 \partial_3 B_3 \Vert^\frac{1}{2}_{L^2}
	\Vert \omega_h^u \Vert^\frac{1}{2}_{L^2}
	\Vert \partial_2 \omega_h^u \Vert^\frac{1}{2}_{L^2}
	\Vert \omega_h^B \Vert^\frac{1}{2}_{L^2}
	\Vert \partial_3 \omega_h^B \Vert^\frac{1}{2}_{L^2}
	\\
	&
	+
	\Vert B_3 \Vert^\frac{1}{4}_{L^2}
	\Vert \partial_1 B_3 \Vert^\frac{1}{4}_{L^2}
	\Vert \partial_3 B_3 \Vert^\frac{1}{4}_{L^2}
	\Vert \partial_1 \partial_3 B_3 \Vert^\frac{1}{4}_{L^2}
	\Vert \omega_h^u \Vert^\frac{1}{2}_{L^2}
	\Vert \partial_2 \omega_h^u \Vert^\frac{1}{2}_{L^2}
	\Vert \partial_3 \omega_h^B \Vert_{L^2}
	\\
%	\lesssim&
%	\Vert B \Vert_{H^1_{co}}^\frac12
%	\Vert \omega_h^B \Vert_{L^2}^\frac12
%	\Vert\partial_h \omega_h^u \Vert_{L^2}
%	\Vert \partial_3 \omega_h^B \Vert_{L^2}^\frac12
%	\Vert \partial_1 B \Vert_{H^1_{co}}^\frac12\\
%	&+
%	\Vert \omega_h^u \Vert_{L^2}^\frac12
%	\Vert \omega_h^B \Vert_{L^2}^\frac12
%	\Vert\partial_3 B \Vert_{H^1_{co}}
%	\Vert \partial_h \omega_h^u \Vert_{L^2}^\frac12
%	\Vert \partial_3 \omega_h^B \Vert_{L^2}^\frac12
%	\\
%	&
%	+
%	\Vert B \Vert_{H^1_{co}}^\frac12
%	\Vert \omega_h^u \Vert_{L^2}^\frac{1}{2}
%	\Vert \partial_3 B \Vert_{H^1_{co}}^\frac12
%	\Vert \partial_h \omega_h^u \Vert_{L^2}^\frac12
%	\Vert \partial_3 \omega_h^B \Vert_{L^2}^\frac12
%	\\
%	&
%	\lesssim
%	\Vert B \Vert_{H^m_{co}}^\frac12
%	\Vert \omega_h^B \Vert_{H^{m-1}_{co}}^\frac12
%	\Vert\partial_h \omega_h^u \Vert_{H^{m-1}_{co}}
%	\Vert \partial_3 \omega_h^B \Vert_{H^{m-1}_{co}}^\frac12
%	\Vert \partial_1 B \Vert_{H^{m-1}_{co}}^\frac12
%	\\
%	&~~~~+
%	\Vert \omega_h^u \Vert_{H^{m-1}_{co}}^\frac12
%	\Vert \omega_h^B \Vert_{H^{m-1}_{co}}^\frac12
%	\Vert\partial_3 B \Vert_{H^m_{co}}
%	\Vert \partial_h \omega_h^u \Vert_{H^{m-1}_{co}}^\frac12
%	\Vert \partial_3 \omega_h^B \Vert_{H^{m-1}_{co}}^\frac12
%	\\
%	&~~~~
%	+
%	\Vert B \Vert_{H^m_{co}}^\frac12
%	\Vert \omega_h^u \Vert_{H^{m-1}_{co}}^\frac{1}{2}
%	\Vert \partial_3 B \Vert_{H^m_{co}}^\frac12
%	\Vert \partial_h \omega_h^u \Vert_{H^{m-1}_{co}}^\frac12
%	\Vert \partial_3 \omega_h^B \Vert_{H^{m-1}_{co}}^\frac12\\
\lesssim&
(\Vert B\Vert_{H_{co}^m}+\Vert (\omega_h^u, \omega_h^B) \Vert_{H_{co}^{m-1}})
	(\Vert \partial_3 B \Vert_{H_{co}^m}^2
+\Vert (\partial_h \omega_h^u, \partial_3 \omega_h^B,\partial_1 B)\Vert^2_{H_{co}^{m-1}}).
\end{aligned}
\end{equation}
Substituting the estimates \eqref{3201}, \eqref{3202}
and \eqref{3203} into \eqref{eqL2}, we have
\begin{equation}\label{3204}
\begin{aligned}
&\frac{1}{2}\frac{d}{dt} \| (\omega_h^u,\omega_h^B)\|^2_{L^2}
		+ \| (\partial_h w_h^u,\partial_3 w_h^B) \|^2_{L^2}
		+ \varepsilon \| (\partial_3 w_h^u,\partial_h w_h^B) \|^2_{L^2}\\
\lesssim
&
(\Vert (u, B)\Vert_{H_{co}^m}+\Vert (\omega_h^u, \omega_h^B) \Vert_{H_{co}^{m-1}})
	(\Vert (\partial_h u, \partial_3 B )\Vert_{H_{co}^m}^2
+\Vert (\partial_h \omega_h^u, \partial_3 \omega_h^B,\partial_1 B)\Vert^2_{H_{co}^{m-1}}).
\end{aligned}
\end{equation}
Next, we turn to the higher order of energy estimates for $(\omega_h^u,\omega_h^B)$.
    For every $m\ge 3$, applying $Z^\alpha$ ($1 \le |\alpha| \le m-1$) to \eqref{eq2}$_1$ and \eqref{eq2}$_2$, we obtain
    \begin{equation*}
	\begin{aligned}
    	&~~~~\partial_t Z^\alpha \omega_h^u + u \cdot \nabla Z^\alpha \omega_h^u - \Delta_h Z^\alpha \omega_h^u - \varepsilon \partial_3^2 Z^\alpha \omega_h^u \\
    	&= - [Z^\alpha, u \cdot \nabla]\omega_h^u + \varepsilon [Z^\alpha,  \partial_3^2]\omega_h^u + Z^\alpha (B\cdot \nabla \omega_h^B + \omega^u \cdot \nabla u_h - \omega^B \cdot \nabla B_h + \partial_1 \omega_h^B),
    \end{aligned}
\end{equation*}
and
\begin{equation*}
	\begin{aligned}
	&~~~~\partial_t Z^\alpha \omega_h^B + u \cdot \nabla Z^\alpha \omega_h^B
- \varepsilon  \Delta_h Z^\alpha \omega_h^B - \partial_3^2 Z^\alpha \omega_h^B \\
	&= - [Z^\alpha, u \cdot \nabla]\omega_h^B + [Z^\alpha,  \partial_3^2]\omega_h^B + Z^\alpha (\mathcal{A} (u,B)+B \cdot \nabla \omega_h^u+\partial_1 \omega_h^u).
\end{aligned}
\end{equation*}
Multiplying the above equation by $Z^\alpha \omega_h^u$
and $Z^\alpha \omega_h^B$ respectively and integrating over
$\mathbb{R}_+^3$, we obtain
	\begin{equation}\label{eqE2}
		\begin{aligned}
			&~~~~\frac{1}{2} \frac{d}{dt} \Vert (Z^\alpha \omega_h^u,Z^\alpha \omega_h^B) \Vert_{L^2}^2 + \Vert (\partial_h Z^\alpha \omega_h^u,\partial_3 Z^\alpha \omega_h^B) \Vert_{L^2}^2 + \varepsilon \Vert (\partial_3 Z^\alpha \omega_h^u,\partial_h Z^\alpha \omega_h^B) \Vert_{L^2}^2 \\
			&= - \int_{\mathbb{R}_+^3} [Z^\alpha, u \cdot \nabla]\omega_h^u \cdot Z^\alpha \omega_h^u dx
			+ \varepsilon \int_{\mathbb{R}_+^3} [Z^\alpha, \partial_3^2]\omega_h^u \cdot Z^\alpha \omega_h^u dx +\int_{\mathbb{R}_+^3} Z^\alpha(B \cdot \nabla\omega_h^B) \cdot Z^\alpha \omega_h^u dx
			\\
			& ~~~~
			+ \int_{\mathbb{R}_+^3} Z^\alpha(\omega^u \cdot \nabla u_h) \cdot Z^\alpha \omega_h^u dx
			-\int_{\mathbb{R}_+^3} Z^\alpha(\omega^B \cdot \nabla B_h) \cdot Z^\alpha \omega_h^u dx
			\\
			& ~~~~
			- \int_{\mathbb{R}_+^3} [Z^\alpha, u \cdot \nabla]\omega_h^B \cdot Z^\alpha \omega_h^B dx
			+ \int_{\mathbb{R}_+^3} [Z^\alpha, \partial_3^2]\omega_h^B \cdot Z^\alpha \omega_h^B dx
			\\
			& ~~~~
			+ \int_{\mathbb{R}_+^3} Z^\alpha \mathcal{A}(u,B) \cdot Z^\alpha \omega_h^B dx
			+\int_{\mathbb{R}_+^3} Z^\alpha(B \cdot \nabla\omega_h^u) \cdot Z^\alpha \omega_h^B dx
			\\
			&\overset{def}{=}
III_1+III_2+III_3+III_4+III_5+III_6+III_7+III_8+III_9,
		\end{aligned}
	\end{equation}
where we have used the basic fact
\begin{equation*}
\int_{\mathbb{R}_+^3} Z^\alpha \omega_h^u \cdot \partial_1 Z^\alpha \omega_h^B dx
+
\int_{\mathbb{R}_+^3} \partial_1 Z^\alpha \omega_h^u \cdot Z^\alpha \omega_h^B dx=0.
\end{equation*}
In the sequence, we will give the estimate from
$III_1$ to $III_9$.
\textbf{Deal with the term $III_1$.}
Obviously, it holds
\begin{equation*}
III_1=-
\int_{\mathbb{R}_+^3} u_3 [Z^\alpha , \partial_3] \omega_h^u \cdot Z^\alpha \omega_h^u dx
-\sum\limits_{\substack{\beta+\gamma=\alpha\\\beta \neq 0}} C_{\beta,\gamma} \int_{\mathbb{R}_+^3} Z^\beta u \cdot Z^\gamma \nabla \omega_h^u \cdot Z^\alpha \omega_h^u dx
	\overset{def}{=}III_{1,1}+III_{1,2}.
\end{equation*}
Obviously, the term $III_{1,1}$ vanishes if $\alpha_3 =0$.
Hence we only consider the case of $\alpha_3 \ge 1$.
It is clear that
\begin{align*}
	[Z_3,\partial_3]f
	&= Z_3 \partial_3 f - \partial_3 Z_3 f
	=
	\varphi \partial_3^2 f
	-
	 \partial_3 (\varphi \partial_3 f)
	=
	\varphi \partial_3^2 f
	- \varphi' \partial_3 f
	- \varphi \partial_3^2 f
	= -\varphi' \partial_3 f,
\end{align*}
and thus
\begin{align*}
	[Z_3^2,\partial_3]f
%	&=
%	Z_3^2 \partial_3 f
%	-
%	\partial_3 Z^2_3 f
%	=
%	Z_3( Z_3 \partial_3 f)
%	-
%	\partial_3 Z^2_3 f
%	=
%	Z_3 ( \partial_3 Z_3 f + [Z_3,\partial_3]f)
%	-
%	\partial_3 Z^2_3 f\\
%	&=
%	Z_3(\partial_3 Z_3 f-\varphi' \partial_3 f)
%	-
%	\partial_3 Z^2_3 f
%	=
%	Z_3 \partial_3 (Z_3 f)
%	-
%	Z_3
%	(\varphi' \partial_3 f)
%	-
%	\partial_3 Z^2_3 f
%\\
%&= \partial_3  Z_3(Z_3 f)
%+ [Z_3 ,\partial_3 ] (Z_3 f) -
%Z_3
%\varphi' \partial_3 f
%-
%\varphi' Z_3 \partial_3 f
%-
%\partial_3 Z^2_3 f
%\\
%&	=
%	\partial_3  Z_3^2 f
%	-\varphi' \partial_3 (Z_3 f)
%	-
%	Z_3
%	\varphi' \partial_3 f
%	-
%	 \varphi' ( \partial_3 Z_3 f + [Z_3,\partial_3 ] f)
%	-
%	\partial_3 Z^2_3 f
%	\\
%	&=
%	-\varphi' \partial_3 Z_3 f
%	-
%	Z_3
%	\varphi' \partial_3 f
%	-
%	\varphi' (\partial_3 Z_3 f-\varphi' \partial_3 f)
	=
	-2 \varphi' \partial_3 Z_3 f
	-
	(Z_3
	\varphi' \partial_3 f
	-
	(\varphi')^2) \partial_3 f.
\end{align*}
By repeating the above steps, it is easy to check that
\begin{align*}
	[Z_3^{\alpha_3},\partial_3]f = \sum_{k=0}^{\alpha_3-1}
	C_{k,\alpha_3}(\varphi') \partial_3 Z_3^k f, \quad \forall \alpha_3 \ge 1,
\end{align*}
where $C_{k,\alpha_3}(\varphi')$ are some harmless functions composed by $Z_3^i (\varphi')^j$ with $i+j\le k$, $j \ge 1$. Furthermore, one can easily obtain
\begin{align*}
	|Z_3^i (\varphi')^j| \lesssim  \frac{1}{x_3+1},\quad
	\forall x_3 \in \mathbb{R_+}, \quad \forall i \in \mathbb{N}, \quad \forall j \in \mathbb{N}^*,
\end{align*}
which also implies that $Z_3^i (\varphi')^j$ is bounded for every $x_3 \in \mathbb{R_+}$, $i \in \mathbb{N}$ and $j \in \mathbb{N}^*$. Thus, $C_{k,\alpha_3}(\varphi')$ are also bounded and we have
\begin{align*}
	|C_{k,\alpha_3}(\varphi')| \lesssim \frac{1}{x_3 + 1}= \frac{1}{x_3}\varphi,\quad
	\forall x_3 \in \mathbb{R_+}, \quad \forall k \in \mathbb{N}.
\end{align*}
This fact implies directly
\begin{equation}\label{3207}
	\begin{aligned}
	III_{1,1}
%	& = \sum\limits_{k=0}^{\alpha_3-1} \int_{\mathbb{R}_+^3} C_{k,\alpha_3}(\varphi') u_3 \partial^{\alpha_1}_1
%	\partial^{\alpha_2}_2 \partial_3
%	Z^k_3 \omega_h^u \cdot Z^\alpha \omega_h^u dx\\
	&\lesssim  \sum\limits_{k=0}^{\alpha_3-1} \int_{\mathbb{R}_+^3}
	\frac{1}{x_3} \varphi \cdot x_3 \Vert \partial_3 u_3 \Vert_{L^\infty_{x_3}}
	|\partial^{\alpha_1}_1
	\partial^{\alpha_2}_2 \partial_3
	Z^k_3 \omega_h^u | | Z^\alpha \omega_h^u| dx\\
	&\lesssim  \sum\limits_{k=0}^{\alpha_3-1} \int_{\mathbb{R}_+^3}
	\Vert \partial_3 u_3 \Vert_{L^\infty_{x_3}}
	|\partial^{\alpha_1}_1
	\partial^{\alpha_2}_2
	Z^{k+1}_3 \omega_h^u | | Z^\alpha \omega_h^u| dx\\
	&\lesssim  \sum\limits_{k=0}^{\alpha_3-1}
	\Vert \partial_3 u_3 \Vert_{L^2_{x_1} L^2_{x_2} L^\infty_{x_3}}
	\Vert \partial^{\alpha_1}_1
	\partial^{\alpha_2}_2
	Z^{k+1}_3 \omega_h^u \Vert_{L^2_{x_1} L^\infty_{x_2} L^2_{x_3}}
	 \Vert Z^\alpha \omega_h^u \Vert_{L^\infty_{x_1} L^2_{x_2} L^2_{x_3}}\\
	 &\lesssim
	 \sum\limits_{k=0}^{\alpha_3-1}
	 \Vert \partial_3 u_3 \Vert_{L^2}^{\frac{1}{2}}
	 \Vert \partial_3 \partial_3 u_3 \Vert_{L^2}^{\frac{1}{2}}
	 \Vert \partial^{\alpha_1}_1
	 \partial^{\alpha_2}_2
	 Z^{k+1}_3 \omega_h^u \Vert_{L^2}^{\frac{1}{2}}
	 \Vert \partial_2 \partial^{\alpha_1}_1
	 \partial^{\alpha_2}_2
	 Z^{k+1}_3 \omega_h^u \Vert_{L^2}^{\frac{1}{2}}
	 \Vert Z^\alpha \omega_h^u \Vert_{L^2}^{\frac{1}{2}}
	  \Vert \partial_1 Z^\alpha \omega_h^u \Vert_{L^2}^{\frac{1}{2}}\\
	 &\lesssim
	 \Vert \omega_h^u \Vert_{H^{m-1}_{co}}
	 (\Vert \partial_h u \Vert_{H^m_{co}}^2
	  +\Vert \partial_h \omega_h^u \Vert_{H^{m-1}_{co}}^2).
\end{aligned}
\end{equation}
%Here, we use the fact
%\begin{equation*}
%	\begin{aligned}
%		|f| \le x_3 \Vert \partial_3 f \Vert_{L_{x_3}^\infty},
%		\quad
%		\forall f|_{x_3=0}=0,
%	\end{aligned}
%\end{equation*}
Here, the notation $\Vert f \Vert_{L^2_{x_3} L^2_{x_2} L^\infty_{x_1}}$ represents the $L^\infty$-norm in the $x_1$ variable, followed by the $L^2$-norm in $x_2$ and the $L^2$-norm in $x_3$.
On the other hand, let us deal with $III_{1,2}$ term.
\begin{equation*}
	\begin{aligned}
	III_{1,2}&=-
	\sum\limits_{\substack{\beta+\gamma=\alpha\\\beta \neq 0}} C_{\beta,\gamma} \int_{\mathbb{R}_+^3} Z^\beta u_h \cdot Z^\gamma \partial_h \omega_h^u \cdot Z^\alpha \omega_h^u dx
	-
	\sum\limits_{\substack{\beta+\gamma=\alpha\\\beta \neq 0}} C_{\beta,\gamma} \int_{\mathbb{R}_+^3} Z^\beta u_3  Z^\gamma \partial_3 \omega_h^u \cdot Z^\alpha \omega_h^u dx\\
	&\overset{def}{=}III_{1,2,1}+III_{1,2,2}.
\end{aligned}
\end{equation*}
Integrating by parts and applying Lemma \ref{lemma1}, it holds
\begin{equation}\label{3208}
	\begin{aligned}
	III_{1,2,1}&=
	\sum\limits_{\substack{\beta+\gamma=\alpha\\\beta \neq 0}} C_{\beta,\gamma} \int_{\mathbb{R}_+^3} Z^\beta (\partial_h \cdot u_h)  Z^\gamma \omega_h^u \cdot Z^\alpha \omega_h^u dx
	+
	\sum\limits_{\substack{\beta+\gamma=\alpha\\\beta \neq 0}} C_{\beta,\gamma} \int_{\mathbb{R}_+^3} Z^\beta u_h  \cdot \partial_h Z^\alpha \omega_h^u \cdot Z^\gamma \omega_h^u dx\\
	&\lesssim
	\sum\limits_{\substack{\beta+\gamma=\alpha\\\beta \neq 0}}
	\Vert Z^\beta \partial_h u_h \Vert_{L^2}^{\frac{1}{2}}
	\Vert \partial_3 Z^\beta \partial_h u_h \Vert_{L^2}^{\frac{1}{2}}
	\Vert Z^\gamma \omega_h^u \Vert_{L^2}^{\frac{1}{2}}
	\Vert \partial_2 Z^\gamma \omega_h^u \Vert_{L^2}^{\frac{1}{2}}
	\Vert Z^\alpha \omega_h^u \Vert_{L^2}^{\frac{1}{2}}
	\Vert \partial_1 Z^\alpha \omega_h^u \Vert_{L^2}^{\frac{1}{2}}\\
	&~~~~+\!\!\!\!\!\sum\limits_{\substack{\beta+\gamma=\alpha\\\beta \neq 0}}
	\Vert Z^\beta u_h \Vert_{L^2}^{\frac{1}{4}}
	\Vert \partial_1 Z^\beta u_h \Vert_{L^2}^{\frac{1}{4}}
	\Vert \partial_2 Z^\beta u_h \Vert_{L^2}^{\frac{1}{4}}
	\Vert \partial_1 \partial_2 Z^\beta u_h \Vert_{L^2}^{\frac{1}{4}}
	\Vert \partial_h Z^\alpha \omega_h^u \Vert_{L^2}
	\Vert Z^\gamma \omega_h^u \Vert_{L^2}^{\frac{1}{2}}
	\Vert \partial_2 Z^\gamma \omega_h^u
	\Vert_{L^2}^{\frac{1}{2}} \\
	&\lesssim
	(\Vert u \Vert_{H^m_{co}}+\Vert \omega_h^u \Vert_{H^{m-1}_{co}})
	(\Vert \partial_h u \Vert_{H^m_{co}}^2
	 +\Vert \partial_h \omega_h^u \Vert_{H^{m-1}_{co}}^2).
\end{aligned}
\end{equation}
It should be pointed out that the estimate of term $III_{1,2,2}$
is somewhat complicated. Thus, we decompose this term
into two component as follows:
\begin{equation}\label{eq01}
	\begin{aligned}
	III_{1,2,2}&=-
	\sum\limits_{\substack{\beta+\gamma=\alpha\\\beta \neq 0}} C_{\beta,\gamma} \int_{\mathbb{R}^2 \times [1,+\infty)} Z^\beta u_3  Z^\gamma \partial_3 \omega_h^u \cdot Z^\alpha \omega_h^u dx\\
	&~~~~-
	\sum\limits_{\substack{\beta+\gamma=\alpha\\\beta \neq 0}} C_{\beta,\gamma} \int_{\mathbb{R}^2 \times [0,1)} Z^\beta u_3 Z^\gamma \partial_3 \omega_h^u \cdot Z^\alpha \omega_h^u dx\\
	&\overset{def}{=}III_{1,2,2,1}+III_{1,2,2,2}.
\end{aligned}
\end{equation}
Obviously, $\varphi \ge 1$ in $[1,+\infty)$ holds, and
then we have
%\begin{equation*}
%	\begin{aligned}
%	|\varphi Z^\gamma \partial_3 \omega_h^u|
%	\lesssim
%	|\varphi \partial_3 Z^\gamma \omega_h^u +\varphi
%	[Z^\gamma,\partial_3] \omega_h^u |
%	\lesssim \sum\limits_{k=0}^{\gamma_3} |\partial_1^{\gamma_1}
%	\partial_2^{\gamma_2}
%	(\varphi \partial_3  Z_3^k \omega_h^u)|
%	\lesssim \sum\limits_{k=0}^{\gamma_3} |\partial_1^{\gamma_1}
%	\partial_2^{\gamma_2}
%	Z_3^{k+1} \omega_h^u|
%,
%\end{aligned}
%\end{equation*}
%which implies one can bound the term $III_{1,1,2,1}$ as follows
\begin{equation}\label{3205}
	\begin{aligned}
	III_{1,2,2,1}&\lesssim
	\sum\limits_{\substack{\beta+\gamma=\alpha\\\beta \neq 0}}  \int_{\mathbb{R}^2 \times [1,+\infty)} |Z^\beta u_3|  | \varphi Z^\gamma \partial_3 \omega_h^u|  |Z^\alpha \omega_h^u| dx
	\\
%	&\lesssim
%	\sum\limits_{\substack{\beta+\gamma=\alpha\\\beta \neq 0}} \sum\limits_{k=0}^{\gamma_3} \int_{\mathbb{R}_+^3} |Z^\beta u_3|  | \partial_1^{\gamma_1}
%	\partial_2^{\gamma_2}
%	Z_3^{k+1} \omega_h^u|  |Z^\alpha \omega_h^u| dx
%	\\
	&\lesssim
	\sum\limits_{\substack{\beta+\gamma=\alpha\\\beta \neq 0}}  \sum\limits_{k=0}^{\gamma_3}
	\Vert Z^\beta u_3 \Vert_{L^2}^{\frac{1}{2}}
	\Vert \partial_3 Z^\beta u_3 \Vert_{L^2}^{\frac{1}{2}}
	\Vert \partial_1^{\gamma_1}
	\partial_2^{\gamma_2}
	Z_3^{k+1} \omega_h^u \Vert_{L^2}^{\frac{1}{2}}
	\Vert \partial_1 \partial_1^{\gamma_1}
	\partial_2^{\gamma_2}
	Z_3^{k+1} \omega_h^u \Vert_{L^2}^{\frac{1}{2}}
	\\
	&~~~~~~~~~~~~~~~~~~~~~~~~~~\times
	\Vert Z^\alpha \omega_h^u \Vert_{L^2}^{\frac{1}{2}}
	\Vert \partial_2 Z^\alpha \omega_h^u \Vert_{L^2}^{\frac{1}{2}}\\
	&\lesssim
	\Vert \omega_h^u \Vert_{H_{co}^{m-1}}
	(\Vert \partial_h u \Vert_{H_{co}^{m}}^2
	+
	\Vert \partial_h \omega_h^u \Vert_{H_{co}^{m-1}}^2).
\end{aligned}
\end{equation}
On the other hand, due to the basic fact
$x_3 \le 2 \varphi(x_3)$ in $[0,1)$ and $u_3$ vanishes on the boundary, we have
\begin{equation}\label{3206}
	\begin{aligned}
III_{1,2,2,2} &\lesssim
	\sum\limits_{\substack{\beta+\gamma=\alpha\\\beta \neq 0}}  \int_{\mathbb{R}^2 \times [0,1)}
x_3  \Vert \partial_3 Z^\beta u_3 \Vert_{L^\infty_{x_3}}  | Z^\gamma \partial_3 \omega_h^u|  |Z^\alpha \omega_h^u| dx\\
	&\lesssim \sum\limits_{\substack{\beta+\gamma=\alpha\\\beta \neq 0}}
	\sum\limits_{k=0}^{\gamma_3} \int_{\mathbb{R}^3_+}
 \Vert \partial_3 Z^\beta u_3 \Vert_{L^\infty_{x_3}} | \partial_1^{\gamma_1}
 \partial_2^{\gamma_2}
 Z_3^{k+1} \omega_h^u|  |Z^\alpha \omega_h^u| dx\\
	&\lesssim
	\sum\limits_{\substack{\beta+\gamma=\alpha\\\beta \neq 0}}
	\sum\limits_{k=0}^{\gamma_3}
	\Vert \partial_3 Z^\beta u_3 \Vert_{L^2}^{\frac{1}{2}}
	\Vert \partial_3 \partial_3 Z^\beta u_3 \Vert_{L^2}^{\frac{1}{2}}
	\Vert \partial_1^{\gamma_1}
	\partial_2^{\gamma_2}
	Z_3^{k+1} \omega_h^u \Vert_{L^2}^{\frac{1}{2}}
	\Vert \partial_1 \partial_1^{\gamma_1}
	\partial_2^{\gamma_2}
	Z_3^{k+1} \omega_h^u \Vert_{L^2}^{\frac{1}{2}}
	\\
	&~~~~~~~~~~~~~~~~~~~\times
	\Vert Z^\alpha \omega_h^u \Vert_{L^2}^{\frac{1}{2}}
	\Vert \partial_2 Z^\alpha \omega_h^u \Vert_{L^2}^{\frac{1}{2}}\\
	&\lesssim
	\Vert \omega_h^u \Vert_{H_{co}^{m-1}}
	(\Vert \partial_h u \Vert_{H_{co}^{m}}^2
	+
	\Vert \partial_h \omega_h^u \Vert_{H_{co}^{m-1}}^2),
\end{aligned}
\end{equation}
where we have used the fact that
\begin{equation}\label{3237}
\begin{aligned}
&\int_{\mathbb{R}^3_+}
 \Vert \partial_3 Z^\beta u_3 \Vert_{L^\infty_{x_3}} | \partial_1^{\gamma_1}
 \partial_2^{\gamma_2}
 Z_3^{k+1} \omega_h^u|  |Z^\alpha \omega_h^u| dx\\
\lesssim
&\int_{\mathbb{R}^2}
 \Vert \partial_3 Z^\beta u_3 \Vert_{L^\infty_{x_3}}
 \|\partial_1^{\gamma_1}
 \partial_2^{\gamma_2}
 Z_3^{k+1} \omega_h^u\|_{L^2_{x_3}}
 \|Z^\alpha \omega_h^u\|_{L^2_{x_3}} dx_h \\
\lesssim
&\int_{\mathbb{R}}
 \left\|\|Z^\alpha \omega_h^u\|_{L^2_{x_3}}\right\|_{L^\infty_{x_2}}
 \left\|\Vert \partial_3 Z^\beta u_3 \Vert_{L^\infty_{x_3}}\right\|_{L^2_{x_2}}
 \|\partial_1^{\gamma_1}
 \partial_2^{\gamma_2}
 Z_3^{k+1} \omega_h^u\|_{L^2_{x_3, x_2}}d x_1\\
\lesssim
& \left\|\Vert \partial_3 Z^\beta u_3 \Vert_{L^\infty_{x_3}}\right\|_{L^2_{x_2, x_1}}
  \left\|\|\partial_1^{\gamma_1}
  \partial_2^{\gamma_2}
  Z_3^{k+1} \omega_h^u\|_{L^\infty_{x_1}} \right\|_{L^2_{x_3, x_2}}
   \left\|\|Z^\alpha \omega_h^u\|_{L^\infty_{x_2}}\right\|_{L^2_{x_3, x_1}}   \\
\lesssim
&\Vert \partial_3 Z^\beta u_3 \Vert_{L^2}^{\frac{1}{2}}
	\Vert \partial_3 \partial_3 Z^\beta u_3 \Vert_{L^2}^{\frac{1}{2}}
	\Vert \partial_1^{\gamma_1}
	\partial_2^{\gamma_2}
	Z_3^{k+1} \omega_h^u \Vert_{L^2}^{\frac{1}{2}}
	\Vert \partial_1 \partial_1^{\gamma_1}
	\partial_2^{\gamma_2}
	Z_3^{k+1} \omega_h^u \Vert_{L^2}^{\frac{1}{2}}
	\Vert Z^\alpha \omega_h^u \Vert_{L^2}^{\frac{1}{2}}
	\Vert \partial_2 Z^\alpha \omega_h^u \Vert_{L^2}^{\frac{1}{2}}.
\end{aligned}
\end{equation}
Thus, the combination of estimates \eqref{3205} and \eqref{3206} gives directly
\begin{equation*}
III_{1,2,2}
\lesssim
\Vert \omega_h^u \Vert_{H_{co}^{m-1}}
(\Vert \partial_h u \Vert_{H_{co}^{m}}^2
+
\Vert \partial_h \omega_h^u \Vert_{H_{co}^{m-1}}^2),
\end{equation*}
which, together with estimate \eqref{3208}, yields directly
\begin{equation}\label{3209}
III_{1,2}
\lesssim
(\Vert u \Vert_{H^m_{co}}+\Vert \omega_h^u \Vert_{H^{m-1}_{co}})
	(\Vert \partial_h u \Vert_{H^m_{co}}^2
	 +\Vert \partial_h \omega_h^u \Vert_{H^{m-1}_{co}}^2).
\end{equation}
Combining the estimates \eqref{3207} and \eqref{3209}, we obtain
\begin{equation}\label{3210}
III_1 \lesssim (\Vert u \Vert_{H^m_{co}}+\Vert \omega_h^u \Vert_{H^{m-1}_{co}})
	(\Vert \partial_h u \Vert_{H^m_{co}}^2
	 +\Vert \partial_h \omega_h^u \Vert_{H^{m-1}_{co}}^2).
\end{equation}
\textbf{Deal with the term $III_2$.}
The term $III_{2}$ vanishes if $\alpha_3 =0$.
Hence we only consider the case of $\alpha_3 \ge 1$.
Obviously, it holds
\begin{equation*}
III_2 = \varepsilon \sum\limits_{k=0}^{\alpha_3-1} \int_{\mathbb{R}_+^3} \partial_1^{\alpha_1}
	\partial_2^{\alpha_2} \left( \hat{C}_{k,\alpha_3}(\varphi') Z^k_3 \partial_3 \omega_h^u
	+ \widetilde{C}_{k,\alpha_3}(\varphi') \partial_3^2 Z^k_3 \omega_h^u \right)\cdot Z^\alpha \omega_h^u dx\overset{def}{=}III_{2,1}+III_{2,2},
\end{equation*}
where $\hat{C}_{k,\alpha_3}(\varphi')$ and $\widetilde{C}_{k,\alpha_3}(\varphi')$ denote the harmless functions depending on derivatives of $\varphi$.
The Cauchy's inequality implies
\begin{equation*}
	III_{2,1} \lesssim
	\varepsilon \sum\limits_{k=0}^{\alpha_3-1}
	\Vert \partial_1^{\alpha_1}
	\partial_2^{\alpha_2} Z^k_3 \partial_3 \omega_h^u \Vert_{L^2}
	\Vert Z^\alpha \omega_h^u \Vert_{L^2}
	\lesssim \varepsilon \Vert \partial_3 \omega_h^u \Vert_{H^{m-2}_{co}}^2,
\end{equation*}
where we have used the fact $\alpha_3 \ge 1$.
Integration by parts and Lemma \ref{lemma1} yield that
\begin{equation*}
	\begin{aligned}
	III_{2,2} &=
	- \varepsilon \sum\limits_{k=0}^{\alpha_3-1} \int_{\mathbb{R}_+^3} \partial_3 \widetilde{C}_{k,\alpha_3}(\varphi') \partial_3 \partial_1^{\alpha_1} \partial_2^{\alpha_2} Z^k_3 \omega_h^u \cdot Z^\alpha \omega_h^u dx\\
	&~~~~- \varepsilon \sum\limits_{k=0}^{\alpha_3-1} \int_{\mathbb{R}_+^3} \widetilde{C}_{k,\alpha_3}(\varphi') \partial_3 \partial_1^{\alpha_1} \partial_2^{\alpha_2} Z^k_3 \omega_h^u \cdot \partial_3 Z^\alpha \omega_h^u dx\\
	&\lesssim
	\varepsilon \sum\limits_{k=0}^{\alpha_3-1}
	\Vert \partial_3 \partial_1^{\alpha_1} \partial_2^{\alpha_2} Z^k_3 \omega_h^u \Vert_{L^2}
	\Vert Z^\alpha \omega_h^u \Vert_{L^2}
	+
	\varepsilon \sum\limits_{k=0}^{\alpha_3-1}
	\Vert \partial_3 \partial_1^{\alpha_1} \partial_2^{\alpha_2} Z^k_3 \omega_h^u \Vert_{L^2}
	\Vert \partial_3 Z^\alpha \omega_h^u \Vert_{L^2}\\
	&\lesssim \varepsilon
	\Vert \partial_3 \omega_h^u \Vert^2_{H^{m-2}_{co}}
	+
	\varepsilon
	\Vert \partial_3 \omega_h^u \Vert_{H^{m-2}_{co}}
	\Vert \partial_3 \omega_h^u \Vert_{H^{m-1}_{co}}.
\end{aligned}
\end{equation*}
Thus, we have
\begin{equation}\label{3211}
\begin{aligned}
III_2 \lesssim \varepsilon \Vert \partial_3 \omega_h^u \Vert_{H^{m-2}_{co}}^2
+
\varepsilon
\Vert \partial_3 \omega_h^u \Vert_{H^{m-2}_{co}}
\Vert \partial_3 \omega_h^u \Vert_{H^{m-1}_{co}}.
\end{aligned}
\end{equation}
\textbf{Deal with the term $III_3-III_5$.}
With the help of Lemma \ref{lemma1}, it is easy to see that
\begin{equation*}
\begin{aligned}
III_3
&\lesssim (\Vert B \Vert_{H^m_{co}}+\Vert (\omega_h^u, \omega_h^B) \Vert_{H^{m-1}_{co}})
(\Vert \partial_3 B \Vert_{H^m_{co}}^2
+\Vert ( \partial_h \omega^u_h, \partial_3 \omega_h^B ,\partial_1 B) \Vert_{H^{m-1}_{co}}^2),\\
III_4
&\lesssim (\Vert u \Vert_{H^{m}_{co}}+\Vert \omega_h^u\Vert_{H^{m-1}_{co}})
(\Vert \partial_h u \Vert_{H^{m}_{co}}^2
 +\Vert  \partial_h \omega_h^u \Vert_{H^{m-1}_{co}}^2),\\
III_5
&\lesssim (\Vert B \Vert_{H^{m}_{co}}+\Vert (\omega_h^u, \omega_h^B) \Vert_{H^{m-1}_{co}})
(\Vert \partial_3 B \Vert_{H^{m}_{co}}^2
 +\Vert ( \partial_h \omega_h^u, \partial_3 \omega_h^B,\partial_1 B)\Vert_{H^{m-1}_{co}}^2).
\end{aligned}
\end{equation*}
\textbf{Deal with the term $III_6$.}
We have the following splitting:
\begin{equation*}
	\begin{aligned}
III_6 =&
	-
	\sum\limits_{\substack{\beta + \gamma = \alpha \\ \beta \neq 0}} C_{\beta,\gamma} \int_{\mathbb{R}_+^3}
	Z^\beta u_3 Z^\gamma \partial_3 \omega_h^B \cdot Z^\alpha \omega_h^B dx
	-\sum\limits_{\substack{\beta + \gamma = \alpha \\ \beta  \neq 0}} C_{\beta,\gamma} \int_{\mathbb{R}_+^3}
	Z^\beta u_h \cdot Z^\gamma \partial_h \omega_h^B \cdot Z^\alpha \omega_h^B dx
	\\
	&
	-\int_{\mathbb{R}^3_+}u_3 [Z^\alpha,\partial_3]\omega_h^B \cdot Z^\alpha \omega_h^B dx\\
	\overset{def}{=}&III_{6,1}+III_{6,2}
	+III_{6,3}
	.
\end{aligned}
\end{equation*}
The application of Lemma \ref{lemma1} yields directly
\begin{equation}\label{3215}
	\begin{aligned}
	III_{6,1} &\lesssim
	\sum\limits_{\substack{\beta + \gamma = \alpha \\ \beta  \neq 0}}
	\Vert
	Z^\beta u_3 \Vert_{L^2} ^{\frac{1}{2}}
	\Vert \partial_1
	Z^\beta u_3 \Vert_{L^2} ^{\frac{1}{2}}
	\Vert Z^\gamma \partial_3 \omega_h^B \Vert_{L^2} ^{\frac{1}{2}}
	\Vert \partial_2 Z^\gamma \partial_3 \omega_h^B \Vert_{L^2} ^{\frac{1}{2}}
	\Vert Z^\alpha \omega_h^B \Vert_{L^2}^{\frac{1}{2}}
	\Vert \partial_3 Z^\alpha \omega_h^B \Vert_{L^2}^{\frac{1}{2}}\\
	&\lesssim
	(\Vert u \Vert_{H^m_{co}}+\Vert\omega_h^B \Vert_{H^{m-1}_{co}})
	(\Vert \partial_h u \Vert_{H^m_{co}}^2+
	\Vert \partial_3 \omega_h^B \Vert_{H^{m-1}_{co}}^2).
\end{aligned}
\end{equation}
Obviously, the term $III_{6,3}$ vanishes if $\alpha_3 \neq 0$. Hence, we only consider the case of $\alpha_3 \ge 1$. It is clear that
\begin{equation}\label{32151}
	\begin{aligned}
III_{6,3}&=\sum_{k=0}^{\alpha_3-1} \int_{\mathbb{R}^3_+} C_{k,\alpha_3}(\varphi') u_3  \partial_1^{\alpha_1} \partial_2^{\alpha_2} \partial_3 Z^k_3 \omega_h^B \cdot Z^\alpha \omega_h^B dx\\
&\lesssim
\sum_{k=0}^{\alpha_3-1}
\Vert u_3 \Vert_{L^2}^\frac12
\Vert \partial_1 u_3 \Vert_{L^2}^\frac12
\Vert \partial_1^{\alpha_1} \partial_2^{\alpha_2} \partial_3 Z^k_3 \omega_h^B \Vert_{L^2}^\frac12
\Vert \partial_2 \partial_1^{\alpha_1} \partial_2^{\alpha_2} \partial_3 Z^k_3 \omega_h^B \Vert_{L^2}^\frac12
\Vert Z^\alpha \omega_h^B \Vert_{L^2}^\frac12
\Vert \partial_3 Z^\alpha \omega_h^B \Vert_{L^2}^\frac12
\\
&\lesssim
(\Vert u \Vert_{H^m_{co}}
+
\Vert \omega_h^B \Vert_{H^{m-1}_{co}}
)
(\Vert \partial_h u \Vert_{H^m_{co}}^2
+
\Vert \partial_3 \omega_h^B \Vert_{H^{m-1}_{co}}^2
).
	\end{aligned}
\end{equation}
Next,  we turn to estimate $III_{6,2}$.
Since  magnetic field only has the dissipation structure in the $x_1$
and $x_3$ direction,  we  classify the value of $\beta$ and discuss below.

If $\beta_3 \neq 0$, then $\alpha_3 \neq 0$. Hence, we obtain
\begin{equation}\label{32152}
	\begin{aligned}
	III_{6,2} &\lesssim
	\sum\limits_{\substack{\beta + \gamma = \alpha \\ \beta  \neq 0}}
	\Vert
	Z^\beta u_h \Vert_{L^2} ^{\frac{1}{2}}
	\Vert \partial_1
	Z^\beta u_h \Vert_{L^2} ^{\frac{1}{2}}
	\Vert Z^\gamma \partial_h \omega_h^B \Vert_{L^2} ^{\frac{1}{2}}
	\Vert \partial_3 Z^\gamma \partial_h \omega_h^B \Vert_{L^2} ^{\frac{1}{2}}
	\Vert Z^\alpha \omega_h^B \Vert_{L^2}^{\frac{1}{2}}
	\Vert \partial_2 Z^\alpha \omega_h^B \Vert_{L^2}^{\frac{1}{2}}\\
	&\lesssim
	(\Vert u \Vert_{H^m_{co}}+\Vert\omega_h^B \Vert_{H^{m-1}_{co}})
	(\Vert \partial_h u \Vert_{H^m_{co}}^2+
	\Vert \partial_3 \omega_h^B \Vert_{H^{m-1}_{co}}^2),
\end{aligned}
\end{equation}
where we have used the fact that $\Vert Z^\alpha \omega_h^B \Vert_{L^2}
+
\Vert \partial_2 Z^\alpha \omega_h^B \Vert_{L^2}
\lesssim \Vert \partial_3 \omega_h^B \Vert_{H^{m-1}_{co}}$ since $\alpha_3\neq 0$ in this case.

If $\beta_1 \neq 0$, then $\alpha_1 \neq 0$. Hence, we obtain
\begin{equation}\label{32153}
	\begin{aligned}
	III_{6,2}
&= -\sum\limits_{\substack{\beta + \gamma = \alpha \\ \beta \neq 0,\gamma \neq 0}} C_{\beta,\gamma} \int_{\mathbb{R}_+^3} Z^\beta u_h \cdot Z^\gamma \partial_h \omega_h^B \cdot Z^\alpha \omega_h^B dx
    -
    \int_{\mathbb{R}_+^3} Z^\alpha u_h \cdot \partial_h \omega_h^B \cdot Z^\alpha \omega_h^B dx\\
	&\lesssim\sum\limits_{\substack{\beta + \gamma = \alpha \\ \beta \neq 0,\gamma \neq 0}}
	 \Vert Z^\beta u_h \Vert_{L^2} ^{\frac{1}{4}}
	\Vert \partial_1
	Z^\beta u_h \Vert_{L^2} ^{\frac{1}{4}}
	\Vert \partial_2
	Z^\beta u_h \Vert_{L^2} ^{\frac{1}{4}}
	\Vert \partial_1 \partial_2
	Z^\beta u_h \Vert_{L^2} ^{\frac{1}{4}}
	\Vert Z^\gamma \partial_h \omega_h^B \Vert_{L^2}^{\frac{1}{2}}
	\Vert \partial_3 Z^\gamma \partial_h \omega_h^B \Vert_{L^2} ^{\frac{1}{2}}
	\Vert Z^\alpha \omega_h^B \Vert_{L^2}\\
	&~~~~
	+
	\Vert Z^\alpha u_h \Vert_{L^2} ^{\frac{1}{2}}
	\Vert \partial_2 Z^\alpha u_h \Vert _{L^2}^{\frac{1}{2}}
	\Vert \partial_h \omega_h^B \Vert _{L^2}^{\frac{1}{2}}
	\Vert \partial_1 \partial_h \omega_h^B \Vert_{L^2} ^{\frac{1}{2}}
	\Vert Z^\alpha \omega_h^B \Vert _{L^2}^{\frac{1}{2}}
	\Vert \partial_3 Z^\alpha \omega_h^B \Vert _{L^2}^{\frac{1}{2}}
	\\
	&\lesssim
	\Vert u \Vert_{H^m_{co}}^{\frac{1}{2}}
	\Vert \omega_h^B \Vert_{H^{m-1}_{co}}^{\frac{1}{2}}
	\Vert \partial_h u \Vert_{H^m_{co}}^{\frac{1}{2}}
	\Vert \partial_3 \omega_h^B \Vert_{H^{m-1}_{co}}^{\frac{1}{2}}
	\Vert \partial_1 \omega_h^B \Vert_{H^{m-2}_{co}}\\
%	&\lesssim
%	\Vert u \Vert_{H^m_{co}}^{\frac{1}{2}}
%	\Vert \omega_h^B \Vert_{H^{m-1}_{co}}^{\frac{1}{2}}
%	\Vert \partial_h u \Vert_{H^m_{co}}^{\frac{1}{2}}
%	\Vert \partial_3 \omega_h^B \Vert_{H^{m-1}_{co}}^{\frac{1}{2}}
%( \Vert \partial_3 B \Vert_{H^m_{co}}
%	+
%	\Vert \partial_1 B \Vert_{H^{m-1}_{co}}
%)\\
	&\lesssim
	(\Vert u \Vert_{H^m_{co}}
	 +\Vert \omega_h^B \Vert_{H^{m-1}_{co}})
	(\Vert (\partial_h u, \partial_3 B)\Vert_{H^m_{co}}^2 + \Vert (\partial_3  \omega_h^B, \partial_1 B)\Vert_{H^{m-1}_{co}}^2),
\end{aligned}
\end{equation}
where we have used the fact that $\Vert Z^\alpha \omega_h^B \Vert_{L^2} \lesssim \Vert \partial_1 \omega_h^B \Vert_{H^{m-2}_{co}} \lesssim \Vert \partial_3 B \Vert_{H^{m}_{co}} + \Vert \partial_1 B \Vert_{H^{m-1}_{co}}
$ since $\alpha_1 \neq 0$ in this case.

If $\beta_2 \neq 0$, then $\alpha_2 \neq 0$. Then, we have the decomposition
\begin{equation*}
	\begin{aligned}
	III_{6,2}
=&-
\int_{\mathbb{R}_+^3} Z^\alpha u_h \cdot \partial_h \omega_h^B \cdot Z^\alpha \omega_h^B dx
-\sum\limits_{\substack{\beta + \gamma = \alpha \\ \beta \neq 0,\gamma \neq 0}} C_{\beta,\gamma} \int_{\mathbb{R}_+^3} Z^\beta u_h \cdot Z^\gamma \partial_h \omega_h^B \cdot Z^\alpha \omega_h^B dx\\
=&-
\int_{\mathbb{R}_+^3} Z^\alpha u_h \cdot \partial_h \omega_h^B \cdot Z^\alpha \omega_h^B dx
-\sum\limits_{\substack{\beta + \gamma = \alpha \\ \beta \neq 0,\gamma \neq 0}} C_{\beta,\gamma} \int_{\mathbb{R}_+^3} Z^\beta u_1 Z^\gamma \partial_1 \omega_h^B \cdot Z^\alpha \omega_h^B dx \\
&-
	\sum\limits_{\substack{\beta + \gamma = \alpha \\ \beta \neq 0,\gamma \neq 0}} C_{\beta,\gamma} \int_{\mathbb{R}_+^3} Z^\beta u_2 Z^\gamma \partial_2 \omega_2^B Z^\alpha \omega_2^B dx
-\alpha_2\int_{\mathbb{R}_+^3} \partial_2 u_2 |Z^\alpha \omega_1^B|^2 dx\\
&-\sum\limits_{\substack{\beta + \gamma = \alpha \\ \beta \neq 0,\gamma \neq 0,|\gamma| \neq m-2}} C_{\beta,\gamma} \int_{\mathbb{R}_+^3} Z^\beta u_2 Z^\gamma \partial_2 \omega_1^B Z^\alpha \omega_1^B dx\\
\overset{def}{=}&III_{6,2,1}+III_{6,2,2}+III_{6,2,3}+III_{6,2,4}+III_{6,2,5}.
\end{aligned}
\end{equation*}
With the help of Lemma \ref{lemma1}, it is easy to check that
\begin{equation}\label{3214}
\begin{aligned}
III_{6,2,1}
\lesssim & \Vert \omega_h^B \Vert_{H^{m-1}_{co}}
	(\Vert (\partial_h u, \partial_3 B )\Vert_{H^m_{co}}^2
     +
	\Vert ( \partial_3 \omega_h^B,\partial_1 B)\Vert_{H^{m-1}_{co}}^2),\\
III_{6,2,2}
\lesssim &(\Vert u \Vert_{H^m_{co}}+\Vert \omega_h^B \Vert_{H^{m-1}_{co}})
	(\Vert (\partial_h u, \partial_3 B )\Vert_{H^m_{co}}^2
     +\Vert ( \partial_3 \omega_h^B,\partial_1 B)\Vert_{H^{m-1}_{co}}^2),\\
III_{6,2,3}
\lesssim &(\Vert u \Vert_{H^m_{co}}+\Vert \omega_h^B \Vert_{H^{m-1}_{co}})
	(\Vert (\partial_h u, \partial_3 B )\Vert_{H^m_{co}}^2
     +\Vert ( \partial_3 \omega_h^B,\partial_1 B)\Vert_{H^{m-1}_{co}}^2),\\
III_{6,2,5}
\lesssim & 	\Vert \omega_h^B \Vert_{H^{m-1}_{co}}
	(\Vert (\partial_h u, \partial_3 B )\Vert_{H^m_{co}}^2
     +
	\Vert ( \partial_3 \omega_h^B,\partial_1 B)\Vert_{H^{m-1}_{co}}^2).
\end{aligned}
\end{equation}
Since  magnetic field has the dissipation structure in the $x_1$
and $x_3$ direction, we apply the divergence-free condition
to obtain
\begin{equation*}
	\begin{aligned}
	III_{6,2,4}=\alpha_2
    \int_{\mathbb{R}_+^3} \partial_3 u_3 |Z^\alpha \omega_1^B|^2 dx
    +\alpha_2\int_{\mathbb{R}_+^3} \partial_1 u_1 |Z^\alpha \omega_1^B|^2 dx
	\overset{def}{=}III_{6,2,4,1}+III_{6,2,4,2}.
\end{aligned}
\end{equation*}
Integrating by parts and applying Lemma \ref{lemma1}, it holds
\begin{equation}\label{3212}
	\begin{aligned}
	III_{6,2,4,1}&= -2\alpha_2 \int_{\mathbb{R}_+^3} u_3 Z^\alpha \omega_1^B \partial_3 Z^\alpha \omega_1^B dx\\
	&\lesssim
	\Vert u \Vert_{L^2}^{\frac{1}{4}}
	\Vert \partial_1 u \Vert_{L^2}^{\frac{1}{4}}
	\Vert \partial_2 u \Vert_{L^2}^{\frac{1}{4}}
	\Vert \partial_1 \partial_2 u \Vert_{L^2}^{\frac{1}{4}}
	\Vert Z^\alpha \omega_h^B \Vert_{L^2}^{\frac{1}{2}}
	\Vert \partial_3 Z^\alpha \omega_h^B \Vert_{L^2}^{\frac{1}{2}}
	\Vert \partial_3 Z^\alpha \omega_h^B \Vert_{L^2}
	\\
	&\lesssim
	(\Vert u \Vert_{H^m_{co}}+\Vert \omega_h^B \Vert_{H^{m-1}_{co}})
	(\Vert \partial_h u \Vert_{H^m_{co}}^2
      +\Vert \partial_3 \omega_h^B \Vert_{H^{m-1}_{co}}^2).
\end{aligned}
\end{equation}
Applying the Eqs.\eqref{eq1}$_2$ and \eqref{eq2}$_2$, we can rewrite the
term $III_{6,2,4,2}$ as follows
\begin{equation*}\label{a03}
	\begin{aligned}
		III_{6,2,4,2}&=\alpha_2\int_{\mathbb{R}_+^3} (\partial_t B_1 + u \cdot \nabla B_1 - \partial_3^2 B_1 - \varepsilon \Delta_h B_1 - B\cdot\nabla u_1) |Z^\alpha \omega_1^B|^2 dx\\
		&=
		\alpha_2\int_{\mathbb{R}_+^3}
		(u \cdot \nabla B_1 - \partial_3^2 B_1 - \varepsilon \Delta_h B_1
		- B\cdot\nabla u_1) |Z^\alpha \omega_1^B|^2 dx
		+\alpha_2
		\frac{d}{dt}\int_{\mathbb{R}_+^3} B_1 |Z^\alpha \omega_1^B|^2 dx\\
		&~~~~-2\alpha_2\int_{\mathbb{R}_+^3} B_1 Z^\alpha \omega_1^B Z^\alpha (\partial_3^2 \omega_1^B + \varepsilon \Delta_h \omega_1^B - u \cdot \nabla \omega_1^B +\mathcal{A}(u,B)_1 + B\cdot \nabla \omega_1^B + \partial_1 \omega_1^u) dx.
	\end{aligned}
\end{equation*}
Then, using Lemma \ref{lemma1} repeatedly, it is easy to check that
%The bound for $III_{6,2,4,2}$ is as follows
\begin{equation}\label{3213}
\begin{aligned}
III_{6,2,4,2}
&\lesssim
\frac{d}{dt}\int_{\mathbb{R}_+^3} B_1 |Z^\alpha \omega_1^B|^2 dx
+
\varepsilon
(\Vert B \Vert_{H^m_{co}}
+
\Vert \omega_h^B \Vert_{H^{m-1}_{co}}
)
(\Vert \partial_h B \Vert_{H^m_{co}}^2
+\Vert \partial_h \omega_h^B \Vert_{H^{m-1}_{co}}^2)
\\
&~~~~
+
(\Vert B \Vert_{H^m_{co}}
+
\Vert \omega_h^B \Vert_{H^{m-1}_{co}}
)
(
\Vert \partial_3 B \Vert_{H^m_{co}}
^2
+
\Vert (\partial_h \omega_h^u ,\partial_3 \omega_h^B ,\partial_1 B) \Vert_{H^{m-1}_{co}}^2)
\\
&
~~~~
+
(\Vert (u,B) \Vert_{H^m_{co}}^2
+
\Vert (\omega_h^u,\omega_h^B) \Vert_{H^{m-1}_{co}}^2
)
(
\Vert (\partial_h u, \partial_3 B) \Vert_{H^m_{co}}^2	
+
\Vert (\partial_h \omega_h^u, \partial_3 \omega_h^B,\partial_1 B) \Vert_{H^{m-1}_{co}}^2
).
\end{aligned}
\end{equation}
%where the proof in detail can be found in \eqref{a03} in Appendix \ref{claim-estimates}.
The combination of estimates \eqref{3212} and \eqref{3213} yields directly
\begin{equation*}
\begin{aligned}
III_{6,2,4}
\lesssim
&\frac{d}{dt}\int_{\mathbb{R}_+^3} B_1 |Z^\alpha \omega_1^B|^2 dx
		+
		\varepsilon
		(\Vert B \Vert_{H^m_{co}}+\Vert \omega_h^B \Vert_{H^{m-1}_{co}})
		(\Vert \partial_h B \Vert_{H^m_{co}}^2
         +\Vert \partial_h \omega_h^B \Vert_{H^{m-1}_{co}}^2)\\
&
+(\Vert (u,B) \Vert_{H^m_{co}}+\Vert \omega_h^B \Vert_{H^{m-1}_{co}})
(\Vert (\partial_h u, \partial_3 B) \Vert_{H^m_{co}}^2
 +\Vert ( \partial_h \omega_h^u, \partial_3 \omega_h^B,\partial_1 B)\Vert_{H^{m-1}_{co}}^2)
 \\
 &
 +
 (\Vert (u,B) \Vert_{H^m_{co}}^2
 +
 \Vert (\omega_h^u,\omega_h^B) \Vert_{H^{m-1}_{co}}^2
 )
 (
 \Vert (\partial_h u, \partial_3 B) \Vert_{H^m_{co}}^2	
 +
 \Vert (\partial_h \omega_h^u, \partial_3 \omega_h^B,\partial_1 B) \Vert_{H^{m-1}_{co}}^2
 ),
\end{aligned}
\end{equation*}
which, together with estimates \eqref{3214}, gives directly
\begin{equation}\label{32154}
	\begin{aligned}
III_{6,2}\lesssim
&
\frac{d}{dt}\int_{\mathbb{R}_+^3} B_1 |Z^\alpha \omega_1^B|^2 dx
+
\varepsilon
(\Vert B \Vert_{H^m_{co}}+\Vert \omega_h^B \Vert_{H^{m-1}_{co}})
(\Vert \partial_h B \Vert_{H^m_{co}}^2
+\Vert \partial_h \omega_h^B \Vert_{H^{m-1}_{co}}^2)\\
&
+(\Vert (u,B) \Vert_{H^m_{co}}+\Vert \omega_h^B \Vert_{H^{m-1}_{co}})
(\Vert (\partial_h u, \partial_3 B) \Vert_{H^m_{co}}^2
+\Vert ( \partial_h \omega_h^u, \partial_3 \omega_h^B,\partial_1 B)\Vert_{H^{m-1}_{co}}^2)
\\
&
+
(\Vert (u,B) \Vert_{H^m_{co}}^2
+
\Vert (\omega_h^u,\omega_h^B) \Vert_{H^{m-1}_{co}}^2
)
(
\Vert (\partial_h u, \partial_3 B) \Vert_{H^m_{co}}^2	
+
\Vert (\partial_h \omega_h^u, \partial_3 \omega_h^B,\partial_1 B) \Vert_{H^{m-1}_{co}}^2
).
	\end{aligned}
\end{equation}
Therefore, combining \eqref{3215}, \eqref{32151},\eqref{32152}, \eqref{32153} and \eqref{32154}, we obtain
\begin{equation}
\begin{aligned}
III_{6}
\lesssim
&\frac{d}{dt}\int_{\mathbb{R}_+^3} B_1 |Z^\alpha \omega_1^B|^2 dx
		+
		\varepsilon
		(\Vert B \Vert_{H^m_{co}}+\Vert \omega_h^B \Vert_{H^{m-1}_{co}})
		(\Vert \partial_h B \Vert_{H^m_{co}}^2
         +\Vert \partial_h \omega_h^B \Vert_{H^{m-1}_{co}}^2)\\
&
+(\Vert (u,B) \Vert_{H^m_{co}}+\Vert \omega_h^B \Vert_{H^{m-1}_{co}})
(\Vert (\partial_h u, \partial_3 B) \Vert_{H^m_{co}}^2
 +\Vert ( \partial_h \omega_h^u, \partial_3 \omega_h^B,\partial_1 B)\Vert_{H^{m-1}_{co}}^2)
 \\
 &
 +
 (\Vert (u,B) \Vert_{H^m_{co}}^2
 +
 \Vert (\omega_h^u,\omega_h^B) \Vert_{H^{m-1}_{co}}^2
 )
 (
 \Vert (\partial_h u, \partial_3 B) \Vert_{H^m_{co}}^2	
 +
 \Vert (\partial_h \omega_h^u, \partial_3 \omega_h^B,\partial_1 B) \Vert_{H^{m-1}_{co}}^2
 ).
\end{aligned}
\end{equation}
\textbf{Deal with the term $III_7$.}
Indeed, the term $III_7$ vanishes if $\alpha_3 =0$.
Thus, we only consider the case of $\alpha_3 \ge 1$.
Similar to the term $III_2$, it is easy to check that
\begin{equation}
	\begin{aligned}
III_7 \lesssim
\Vert \partial_3 \omega_h^B \Vert^2_{H^{m-2}_{co}}
+
\Vert \partial_3 \omega_h^B \Vert_{H^{m-2}_{co}}
\Vert \partial_3 \omega_h^B \Vert_{H^{m-1}_{co}}.
	\end{aligned}
\end{equation}
\textbf{Deal with the term $III_8$.}
Due to the definition of $\mathcal{A}(u,B)$ in \eqref{3216}, we have
\begin{equation*}
	\begin{aligned}
	III_8&\overset{def}{=}\int_{\mathbb{R}_+^3} Z^\alpha \left(
	\partial_2 B \cdot \nabla u_3
	-\partial_3 B \cdot \nabla u_2
	-\partial_2 u \cdot \nabla B_3
	+\partial_3 u \cdot \nabla B_2
	\right) Z^\alpha \omega_1^B dx\\
	&~~~~+\int_{\mathbb{R}_+^3} Z^\alpha \left(
	\partial_3 B \cdot \nabla u_1
	-\partial_1 B \cdot \nabla u_3
	-\partial_3 u \cdot \nabla B_1
	+\partial_1 u \cdot \nabla B_3
	\right) Z^\alpha \omega_2^B dx,
%	&\overset{def}{=}III_{8,1}+III_{8,2}+III_{8,3}+III_{8,4}
%+III_{8,5}+III_{8,6}+III_{8,7}+III_{8,8}.
\end{aligned}
\end{equation*}
which, together with Lemma \ref{lemma1} repeatedly, yields directly
\begin{equation}\label{3226}
\begin{aligned}
III_{8}
\lesssim
&\frac{d}{dt}\int_{\mathbb{R}_+^3} B_3 Z^\alpha \partial_2 B_1  Z^\alpha \partial_2 B_3 dx
+
\frac{d}{dt}\int_{\mathbb{R}_+^3} B_1 | Z^\alpha \partial_2 B_3|^2 dx
+
		\varepsilon
		(\Vert B \Vert_{H^m_{co}}
		+
		\Vert  \omega_h^B \Vert_{H^{m-1}_{co}})
		\Vert \partial_h B \Vert_{H^m_{co}}^2\\
&+(\Vert (u,B)\Vert_{H^m_{co}}+\Vert (\omega_h^u,  \omega_h^B)\Vert_{H^{m-1}_{co}})
		(\Vert (\partial_h u,\partial_3 B )\Vert_{H^m_{co}}^2
         +\Vert ( \partial_h \omega_h^u,
         \partial_3 \omega_h^B,\partial_1 B )\Vert_{H^{m-1}_{co}}^2)
         \\
         &
         +
         (
         \Vert (u,B) \Vert_{H^m_{co}}^2
         +
         \Vert \omega_h^u \Vert_{H^{m-1}_{co}}^2
         )
         (
         \Vert (\partial_h u,\partial_3 B) \Vert_{H^m_{co}}^2
         +
         \Vert (\partial_h \omega_h^u, \partial_1 B) \Vert_{H^{m-1}_{co}}^2.
\end{aligned}
\end{equation}
\textbf{Deal with the term $III_9$.}
We have the following splitting:
\begin{equation*}
	\begin{aligned}
	III_9 &= \sum\limits_{\beta + \gamma = \alpha} C_{\beta,\gamma}
	\int_{\mathbb{R}_+^3} Z^\beta B_h \cdot Z^\gamma \partial_h \omega_h^u \cdot Z^\alpha \omega_h^B dx
	+
	\sum\limits_{\beta + \gamma = \alpha} C_{\beta,\gamma}
	\int_{\mathbb{R}_+^3} Z^\beta B_3 Z^\gamma \partial_3 \omega_h^u \cdot Z^\alpha \omega_h^B dx \\
	&\overset{def}{=}III_{9,1}+III_{9,2}.
\end{aligned}
\end{equation*}
Using Lemma \ref{lemma1}, it is easy to obtain
\begin{equation*}
	\begin{aligned}
	III_{9,1}&=\sum\limits_{\substack{\beta + \gamma = \alpha\\ \gamma \neq 0}} C_{\beta,\gamma}
	\int_{\mathbb{R}_+^3} Z^\beta B_h \cdot Z^\gamma \partial_h \omega_h^u \cdot Z^\alpha \omega_h^B dx
	+\int_{\mathbb{R}_+^3} Z^\alpha B_h \cdot \partial_h \omega_h^u \cdot Z^\alpha \omega_h^B dx
	\\
	&\lesssim\sum\limits_{\substack{\beta + \gamma = \alpha\\ \gamma \neq 0}}\!\!
	\Vert Z^\beta B_h \Vert^{\frac{1}{4}}_{L^2}
	\Vert \partial_1 Z^\beta B_h \Vert^{\frac{1}{4}}_{L^2}
	\Vert \partial_2 Z^\beta B_h \Vert^{\frac{1}{4}}_{L^2}
	\Vert \partial_1 \partial_2 Z^\beta B_h \Vert^{\frac{1}{4}}_{L^2}
	\Vert Z^\gamma \partial_h \omega_h^u \Vert
	\Vert Z^\alpha \omega_h^B \Vert_{L^2}^{\frac{1}{2}}
	\Vert \partial_3 Z^\alpha \omega_h^B \Vert_{L^2}^{\frac{1}{2}}
	\\
	&~~~~+
	\Vert Z^\alpha B_h \Vert^{\frac{1}{2}}_{L^2}
	\Vert \partial_1 Z^\alpha B_h \Vert^{\frac{1}{2}}_{L^2}
	\Vert \partial_h \omega_h^u \Vert^{\frac{1}{2}}_{L^2}
	\Vert \partial_2 \partial_h \omega_h^u \Vert^{\frac{1}{2}}_{L^2}
	\Vert Z^\alpha \omega_h^B \Vert^{\frac{1}{2}}_{L^2}
	\Vert \partial_3 Z^\alpha \omega_h^B \Vert^{\frac{1}{2}}_{L^2}\\
%	&\lesssim
%	\Vert B \Vert_{H_{co}^m}^{\frac{1}{2}}
%	\Vert \partial_1 B \Vert_{H_{co}^{m-1}}^{\frac{1}{2}}
%	\Vert \partial_h \omega_h^u \Vert_{H_{co}^{m-1}}
%	\Vert \omega_h^B \Vert_{H_{co}^{m-1}}^{\frac{1}{2}}
%	\Vert \partial_3 \omega_h^B \Vert_{H_{co}^{m-1}}^{\frac{1}{2}}\\
%	&~~~~+
%	\Vert B \Vert_{H_{co}^m}^{\frac{1}{2}}
%	\Vert \partial_1 B \Vert_{H_{co}^{m-1}}^{\frac{1}{2}}
%	\Vert \partial_h \omega_h^u \Vert_{H_{co}^2}
%	\Vert \omega_h^B \Vert_{H_{co}^{m-1}}^{\frac{1}{2}}
%	\Vert \partial_3 \omega_h^B \Vert_{H_{co}^{m-1}}^{\frac{1}{2}}
%	\\
	&
	\lesssim
	(\Vert B \Vert_{H_{co}^m}+\Vert \omega_h^B \Vert_{H_{co}^{m-1}})
	\Vert ( \partial_h \omega_h^u,
            \partial_3 \omega_h^B,\partial_1 B )\Vert_{H_{co}^{m-1}}^2.
\end{aligned}
\end{equation*}
We now focus on term $III_{9,2}$.
The case $\gamma_3 =0$ is easy, since $Z^\gamma  \partial_3 \omega_h^u=\partial_3 Z^\gamma \omega_h^u $, so we can use integration by parts and Lemma \ref{lemma1} to estimate it. Now we focus on the case $\gamma_3 \neq 0$.
We use integration by parts and Lemma \ref{lemma1} to obtain
\begin{equation*}
	\begin{aligned}
	III_{9,2}&=\sum\limits_{\beta + \gamma = \alpha}  C_{\beta,\gamma}
	\int_{\mathbb{R}_+^3} Z^\beta B_3 \bigg(
	\partial_3 Z^\gamma \omega_h^u
	+ \sum\limits_{k=0}^{\gamma_3-1} C_{k,\gamma_3}(\varphi') \partial_3 \partial_1 ^{\gamma_1}
	\partial_2 ^{\gamma_2}
	 Z_3^k  \omega_h^u \bigg) \cdot Z^\alpha \omega_h^B dx\\
	 &=-\sum\limits_{\beta + \gamma = \alpha}  C_{\beta,\gamma}
	 \int_{\mathbb{R}_+^3} \partial_3  Z^\beta B_3 Z^\gamma \omega_h^u \cdot Z^\alpha \omega_h^B dx
	 -\sum\limits_{\beta + \gamma = \alpha}  C_{\beta,\gamma}
	 \int_{\mathbb{R}_+^3} Z^\beta B_3 Z^\gamma \omega_h^u \cdot \partial_3 Z^\alpha \omega_h^B dx\\
	 &~~~~-\sum\limits_{\beta + \gamma = \alpha} \sum\limits_{k=0}^{\gamma_3-1} C_{\beta,\gamma}
	 \int_{\mathbb{R}_+^3} C_{k,\gamma_3}(\varphi')
	 \partial_3  Z^\beta B_3
	 \partial_1 ^{\gamma_1}
	 \partial_2 ^{\gamma_2}
	 Z_3^k \omega_h^u \cdot Z^\alpha \omega_h^B dx\\
	 &~~~~-\sum\limits_{\beta + \gamma = \alpha} \sum\limits_{k=0}^{\gamma_3-1} C_{\beta,\gamma}
	 \int_{\mathbb{R}_+^3} \partial_3 C_{k,\gamma_3}(\varphi')
	  Z^\beta B_3
	 \partial_1 ^{\gamma_1}
	 \partial_2 ^{\gamma_2}
	 Z_3^k \omega_h^u \cdot Z^\alpha \omega_h^B dx\\
	 &~~~~-\sum\limits_{\beta + \gamma = \alpha} \sum\limits_{k=0}^{\gamma_3-1} C_{\beta,\gamma}
	 \int_{\mathbb{R}_+^3} C_{k,\gamma_3}(\varphi')
	 Z^\beta B_3
	 \partial_1 ^{\gamma_1}
	 \partial_2 ^{\gamma_2}
	 Z_3^k \omega_h^u \cdot \partial_3 Z^\alpha \omega_h^B dx
\end{aligned}
\end{equation*}
\begin{equation*}
	 \begin{aligned}
	 &\lesssim
	 \sum\limits_{\beta + \gamma = \alpha}
	 \Vert \partial_3  Z^\beta B_3 \Vert^{\frac{1}{2}}_{L^2}
	 \Vert \partial_1 \partial_3  Z^\beta B_3 \Vert^{\frac{1}{2}}_{L^2}
	 \Vert Z^\gamma \omega_h^u \Vert^{\frac{1}{2}}_{L^2}
	 \Vert \partial_2 Z^\gamma \omega_h^u \Vert^{\frac{1}{2}}_{L^2}
	 \Vert Z^\alpha \omega_h^B \Vert^{\frac{1}{2}}_{L^2}
	 \Vert \partial_3 Z^\alpha \omega_h^B \Vert^{\frac{1}{2}}_{L^2}\\
	 &~~~~+\sum\limits_{\beta + \gamma = \alpha}
	 \Vert Z^\beta B_3 \Vert^{\frac{1}{4}}_{L^2}
	 \Vert \partial_1 Z^\beta B_3 \Vert^{\frac{1}{4}}_{L^2}
	 \Vert \partial_3 Z^\beta B_3 \Vert^{\frac{1}{4}}_{L^2}
	 \Vert \partial_1 \partial_3 Z^\beta B_3 \Vert^{\frac{1}{4}}_{L^2}\\
	 &~~~~~~~~~~~~~~~~~~~~\times
	 \Vert Z^\gamma \omega_h^u \Vert^{\frac{1}{2}}_{L^2}
	 \Vert \partial_2 Z^\gamma \omega_h^u \Vert^{\frac{1}{2}}_{L^2}
	 \Vert \partial_3 Z^\alpha \omega_h^B \Vert_{L^2}\\
	 &~~~~+\sum\limits_{\beta + \gamma = \alpha} \sum\limits_{k=0}^{\gamma_3-1}
	 \Vert \partial_3
	 Z^\beta B_3 \Vert^{\frac{1}{2}}_{L^2}
	 \Vert \partial_1 \partial_3
	 Z^\beta B_3 \Vert^{\frac{1}{2}}_{L^2}
	 \Vert \partial_1 ^{\gamma_1}
	 \partial_2 ^{\gamma_2}
	 Z_3^k \omega_h^u \Vert^{\frac{1}{2}}_{L^2}
	 \Vert \partial_2 \partial_1 ^{\gamma_1}
	 \partial_2 ^{\gamma_2}
	 Z_3^k \omega_h^u \Vert^{\frac{1}{2}}_{L^2}\\
	 &~~~~~~~~~~~~~~~~~~~~~~~~\times
	 \Vert Z^\alpha \omega_h^B \Vert^{\frac{1}{2}}_{L^2}
	 \Vert \partial_3 Z^\alpha \omega_h^B \Vert^{\frac{1}{2}}_{L^2}\\
	 &~~~~+\sum\limits_{\beta + \gamma = \alpha} \sum\limits_{k=0}^{\gamma_3-1}
	 \Vert
	 Z^\beta B_3 \Vert^{\frac{1}{2}}_{L^2}
	 \Vert \partial_1
	 Z^\beta B_3 \Vert^{\frac{1}{2}}_{L^2}
	 \Vert \partial_1 ^{\gamma_1}
	 \partial_2 ^{\gamma_2}
	 Z_3^k \omega_h^u \Vert^{\frac{1}{2}}_{L^2}
	 \Vert \partial_2 \partial_1 ^{\gamma_1}
	 \partial_2 ^{\gamma_2}
	 Z_3^k \omega_h^u \Vert^{\frac{1}{2}}_{L^2}\\
	 &~~~~~~~~~~~~~~~~~~~~~~~~\times
	 \Vert Z^\alpha \omega_h^B \Vert^{\frac{1}{2}}_{L^2}
	 \Vert \partial_3 Z^\alpha \omega_h^B \Vert^{\frac{1}{2}}_{L^2}\\
	 &~~~~+\sum\limits_{\beta + \gamma = \alpha} \sum\limits_{k=0}^{\gamma_3-1}
	 \Vert
	 Z^\beta B_3 \Vert^{\frac{1}{4}}_{L^2}
	 \Vert \partial_1
	 Z^\beta B_3 \Vert^{\frac{1}{4}}_{L^2}
	 \Vert \partial_3
	 Z^\beta B_3 \Vert^{\frac{1}{4}}_{L^2}
	 \Vert \partial_1 \partial_3
	 Z^\beta B_3 \Vert^{\frac{1}{4}}_{L^2}\\
	 &~~~~~~~~~~~~~~~~~~~~~~~~\times
	 \Vert \partial_1 ^{\gamma_1}
	 \partial_2 ^{\gamma_2}
	 Z_3^k \omega_h^u \Vert^{\frac{1}{2}}_{L^2}
	 \Vert \partial_2 \partial_1 ^{\gamma_1}
	 \partial_2 ^{\gamma_2}
	 Z_3^k \omega_h^u \Vert^{\frac{1}{2}}_{L^2}
	 \Vert \partial_3 Z^\alpha \omega_h^B \Vert_{L^2}\\
	 &\lesssim
	 (\Vert B \Vert_{H^{m}_{co}}
        +\Vert (\omega_h^u, \omega_h^B)\Vert_{H^{m-1}_{co}})
	 (\Vert \partial_3 B \Vert_{H^m_{co}}^2
	  +\Vert (\partial_h \omega_h^u, \partial_3 \omega_h^B,
       \partial_1 B ) \Vert_{H^{m-1}_{co}}^2),
\end{aligned}
\end{equation*}
where we have used the fact $\Vert Z^\alpha \omega_h^B \Vert_{L^2} \lesssim  \Vert \partial_3  \omega_h^B \Vert_{H_{co}^{m-1}}$ since $\gamma_3 \neq 0$ implies $\alpha_3 \neq 0$ in this case. Combining the estimates of terms $III_{9,1}$ and $III_{9,2}$, it holds
\begin{equation}\label{3227}
III_9
\lesssim
	 (\Vert B \Vert_{H^{m}_{co}}
        +\Vert (\omega_h^u, \omega_h^B)\Vert_{H^{m-1}_{co}})
	 (\Vert \partial_3 B \Vert_{H^m_{co}}^2
	  +\Vert (\partial_h \omega_h^u, \partial_3 \omega_h^B,
       \partial_1 B ) \Vert_{H^{m-1}_{co}}^2).
\end{equation}
Integrating \eqref{3204} and \eqref{eqE2} in time
and inserting all the estimates presented above for $III_1$ through $III_9$, we obtain \eqref{eq1-1} after using
equivalent norms in \eqref{b5} and \eqref{b7}. This concludes the proof of \eqref{eq1-0}.

\subsection{Estimate of horizontal dissipation of magnetic field }
This section aims to prove \eqref{eq1-2}, that is
\begin{equation*}
	\begin{aligned}
	G(t) \lesssim E(0) + E(t) + E(t)^\frac{3}{2} + G(t)^\frac{3}{2}.
\end{aligned}
\end{equation*}
For notational convenience, we drop the superscript $\ep$ throughout this section.
We use the Eq.\eqref{eq1}$_1$ to write
\begin{equation*}
	\begin{aligned}
	\partial_1 B
	=
	\partial_t u
	+ u \cdot \nabla u-\Delta_h u+\nabla p-\varepsilon \partial_3^2 u -B\cdot \nabla B.
\end{aligned}
\end{equation*}
Hence, for every $m \ge 3$, we have
\begin{equation}\label{eqGm-1}
	\begin{aligned}
		\Vert \partial_1 B \Vert_{H^{m-1}_{co}}^2
		&=
		\sum_{|\alpha| \le m-1}
		\int_{\mathbb{R}_+^3}
		Z^\alpha
		\left(
		\partial_t u
		+ u \cdot \nabla u-\Delta_h u+\nabla p-\varepsilon \partial_3^2 u -B\cdot \nabla B
		\right)
		\cdot Z^\alpha \partial_1 B
		dx\\
		&\overset{def}{=}IV_1+IV_2+IV_3+IV_4+IV_5+IV_6.
	\end{aligned}
\end{equation}
\textbf{Deal with term $IV_1$}.
Using the Eq.\eqref{eq1}$_3$, it is easy to check that
\begin{equation}\label{3238}
	\begin{aligned}
	IV_1 &= \sum_{|\alpha| \le m-1}
	\frac{d}{dt}  \int_{\mathbb{R}_+^3}
	Z^\alpha u \cdot Z^\alpha \partial_1 B dx
	-\sum_{|\alpha| \le m-1}
	\int_{\mathbb{R}_+^3}
	Z^\alpha u \cdot Z^\alpha \partial_1 \partial_t B dx \\
	&=\sum_{|\alpha| \le m-1}\frac{d}{dt} \int_{\mathbb{R}_+^3}
	Z^\alpha u \cdot Z^\alpha \partial_1 B dx
	\\
	&~~~~~~~~-\sum_{|\alpha| \le m-1}
	\int_{\mathbb{R}_+^3}
	Z^\alpha u \cdot Z^\alpha \partial_1 (- u \cdot \nabla B + \partial_3^2 B +\varepsilon \Delta_h B +B\cdot \nabla u +\partial_1 u
	) dx,
%	&\overset{def}{=}IV_{1,1}+IV_{1,2}+IV_{1,3}+IV_{1,4}+IV_{1,5}+IV_{1,6}.
\end{aligned}
\end{equation}
which, together with Lemma \ref{lemma1}, yields directly
\begin{equation*}
	\begin{aligned}
IV_1
\lesssim&
\sum_{|\alpha| \le m-1}\frac{d}{dt} \int_{\mathbb{R}_+^3}
	Z^\alpha u \cdot Z^\alpha \partial_1 B dx
+(\Vert (\partial_h u,\partial_3 B) \Vert_{H^m_{co}}^2
	+\Vert \partial_3 \omega_h^B \Vert_{H^{m-1}_{co}}^2
    +\ep \Vert \partial_h B \Vert_{H^m_{co}}^2)\\
&+(\Vert (u, B) \Vert_{H^m_{co}}+\Vert \omega_h^u \Vert_{H^{m-1}_{co}})
	 (\Vert (\partial_h u, \partial_3 B) \Vert_{H^m_{co}}^2
	 +\Vert \partial_h \omega_h^u \Vert_{H^{m-1}_{co}}^2).
	\end{aligned}
\end{equation*}
\textbf{Deal with terms $IV_2, IV_3, IV_5$ and $IV_6$}.
We also use Lemma \ref{lemma1} repeatedly to obtain
\begin{equation*}
	\begin{aligned}
	IV_2 %& = \sum_{|\alpha| \le m-1} \sum\limits_{\beta + \gamma = \alpha}
%	C_{\beta,\gamma} \int_{\mathbb{R}_+^3}
%	Z^\beta u \cdot Z^\gamma \nabla u \cdot Z^\alpha \partial_1 B dx\\
%	&\lesssim \sum_{|\alpha| \le m-1} \sum\limits_{\beta + \gamma = \alpha}
%	\Vert Z^\beta u \Vert^{\frac{1}{2}}_{L^2}
%	\Vert \partial_1 Z^\beta u \Vert^{\frac{1}{2}}_{L^2}
%	\Vert Z^\gamma \nabla u \Vert^{\frac{1}{2}}_{L^2}
%	\Vert \partial_2 Z^\gamma \nabla u \Vert^{\frac{1}{2}}_{L^2}
%	\Vert Z^\alpha \partial_1 B \Vert^{\frac{1}{2}}_{L^2}
%	\Vert \partial_3 Z^\alpha \partial_1 B \Vert^{\frac{1}{2}}_{L^2}\\
%	&\lesssim
%	\Vert u \Vert_{H^m_{co}}^{\frac{1}{2}}
%	\Vert \partial_h u \Vert_{H^m_{co}}^{\frac{1}{2}}
%	\Vert \nabla u \Vert_{H^{m-1}_{co}}^{\frac{1}{2}}
%	\Vert \partial_h \nabla u \Vert_{H^{m-1}_{co}}^{\frac{1}{2}}
%	\Vert \partial_1 B \Vert_{H^{m-1}_{co}}^{\frac{1}{2}}
%	\Vert \partial_3 B \Vert_{H^m_{co}}^{\frac{1}{2}}
%	\\
%	&\lesssim
%	\Vert u \Vert_{H^m_{co}}^{\frac{1}{2}}
%	\Vert \partial_h u \Vert_{H^m_{co}}^\frac12
%	(\Vert u \Vert_{H^m_{co}}^{\frac{1}{2}}
%	+\Vert \omega_h^u \Vert_{H^{m-1}_{co}}^{\frac{1}{2}})
%	(\Vert \partial_h u \Vert_{H^m_{co}}^{\frac{1}{2}}
%	+\Vert \partial_h \omega_h^u \Vert_{H^{m-1}_{co}}^{\frac{1}{2}})
%	\Vert \partial_1 B \Vert_{H^{m-1}_{co}}^{\frac{1}{2}}
%	\Vert \partial_3 B\Vert_{H^m_{co}}^\frac12
%	\\
	&\lesssim
	(\Vert u \Vert_{H^m_{co}}+\Vert \omega_h^u \Vert_{H^{m-1}_{co}})
	(\Vert (\partial_h u, \partial_3 B) \Vert_{H^m_{co}}^2
	 +
	\Vert (\partial_h \omega_h^u, \partial_1 B) \Vert_{H^{m-1}_{co}}^2),\\
	IV_3
%= -
%	\sum_{|\alpha| \le m-1}
%	 \int_{\mathbb{R}_+^3}
%	Z^\alpha \Delta_h u \cdot Z^\alpha \partial_1 B dx
%	&\lesssim
%	\sum_{|\alpha| \le m-1}
%	\Vert Z^\alpha \Delta_h u \Vert_{L^2}
%	\Vert Z^\alpha \partial_1 B \Vert_{L^2}
&\lesssim
	\Vert \partial_h u \Vert_{H^m_{co}}
	\Vert \partial_1 B \Vert_{H^{m-1}_{co}},\\
IV_5%&=
%	-\varepsilon \sum_{|\alpha| \le m-1}
%	\int_{\mathbb{R}_+^3} \partial_1^{\alpha_1}
%	\partial_2^{\alpha_2}
%	\left(
%		\partial_3 Z^{\alpha_3}_3 \partial_3 u
%	+ \sum\limits_{k=0}^{\alpha_3-1} C_{k,\alpha_3}(\varphi')
%	\partial_3 Z^k_3 \partial_3 u
%	 \right)
%	 \cdot
%	 Z^\alpha \partial_1 B
%	 dx
%	 \\
%	 &=
%	 \varepsilon \sum_{|\alpha| \le m-1}
%	 \int_{\mathbb{R}_+^3}
%	 Z^\alpha \partial_3 u \cdot
%	 \partial_3 Z^\alpha \partial_1 B dx\\
%	 &~~~~
%	 +\varepsilon \sum_{|\alpha| \le m-1}
%	 \sum\limits_{k=0}^{\alpha_3-1} \int_{\mathbb{R}_+^3}\partial_3 C_{k,\alpha_3}(\varphi')
%	 \partial_1^{\alpha_1}
%	 \partial_2^{\alpha_2}
%	 Z^k_3
%	 \partial_3 u \cdot Z^\alpha \partial_1 B dx
%	 \\
%	 &~~~~
%	 +
%	 \varepsilon \sum_{|\alpha| \le m-1}
%	 \sum\limits_{k=0}^{\alpha_3-1} \int_{\mathbb{R}_+^3} C_{k,\alpha_3}(\varphi')
%	 \partial_1^{\alpha_1}
%	 \partial_2^{\alpha_2}
%	 Z^k_3
%	 \partial_3 u \cdot \partial_3  Z^\alpha \partial_1 B dx\\
%	 &\lesssim
%	 \varepsilon
%	 \Vert \partial_3 u \Vert_{H^{m-1}_{co}}^2
%	 +(\Vert \partial_3 B \Vert_{H^m_{co}}^2
%        +\Vert \partial_1 B \Vert_{H^{m-1}_{co}}^2)\\
     &\lesssim
     \varepsilon
     \Vert \partial_3 u \Vert_{H^{m-1}_{co}}
     \Vert \partial_3 B \Vert_{H^m_{co}}
     +
     \varepsilon
	 \Vert \partial_3 u \Vert_{H^{m-1}_{co}}
	 \Vert \partial_1 B \Vert_{H^{m-1}_{co}},\\
IV_6
&\lesssim
\Vert B \Vert_{H^m_{co}}
	(\Vert \partial_3 B \Vert_{H^m_{co}}^2
	 +\Vert \partial_1 B \Vert_{H^{m-1}_{co}}^2).
\end{aligned}
\end{equation*}
%Here, we only consider $IV_5$ in the case of  $\alpha_3 \neq 0$. The case $\alpha_3 =0$ is easy, since $Z^\alpha \partial_3^2 u=\partial_3 Z^\alpha \partial_3 u $, so we can use integration by parts and Lemma \ref{lemma1} to estimate it.
\textbf{Deal with term $IV_4$}.
Using H\"{o}lder's inequality, we have
\begin{equation}\label{3231}
	IV_4 \lesssim \Vert \nabla p \Vert_{H^{m-1}_{co}} \Vert \partial_1 B \Vert_{H^{m-1}_{co}}.
\end{equation}
In order to bound the pressure $p$, we take divergence operator
to the Eq.\eqref{eq1}$_1$ and obtain the equation
\begin{equation*}
	\Delta p = \nabla \cdot (B \cdot \nabla B - u \cdot \nabla u)\quad {\rm in} ~~\mathbb{R}_+^3,
\end{equation*}
which, together with estimate \eqref{a15} in Lemma \ref{p}, yields directly
\begin{equation*}
	\begin{aligned}
	\Vert \nabla p \Vert_{H_{co}^{m-1}}
	&\lesssim
	\Vert B \cdot \nabla B \Vert_{H_{co}^{m-1}}
	+
	\Vert u \cdot \nabla u \Vert_{H_{co}^{m-1}}
	+\Vert \nabla \cdot (B \cdot \nabla B) \Vert_{H_{co}^{m-2}}
	+
	\Vert \nabla \cdot (u \cdot \nabla u) \Vert_{H_{co}^{m-2}}\\
	&\overset{def}{=}IV_{4,1}+IV_{4,2}+IV_{4,3}+IV_{4,4}.
\end{aligned}
\end{equation*}
Using Lemma \ref{lemma1}, it is easy to check that
\begin{equation}\label{3232}
	\begin{aligned}
		IV_{4,1}
		&\lesssim
		\sum_{|\alpha| \le m-1}
		\bigg(
		\sum_{\substack{\beta+\gamma=\alpha\\ \beta \neq 0}}
		\Vert Z^\beta B_h \cdot Z^\gamma \partial_h B \Vert_{L^2}
		+
		\Vert B_h \cdot Z^\alpha \partial_h B \Vert_{L^2}
		+
		\sum_{\beta+\gamma=\alpha}
		\Vert Z^\beta B_3 Z^\gamma \partial_3 B \Vert_{L^2}
		\bigg)\\
		&\lesssim
		\sum_{|\alpha| \le m-1} \sum_{\substack{\beta+\gamma=\alpha\\ \beta \neq 0}}
		\Vert Z^\beta B_h \Vert^\frac{1}{4}_{L^2}
		\Vert \partial_2 Z^\beta B_h \Vert^\frac{1}{4}_{L^2}
		\Vert \partial_3 Z^\beta B_h \Vert^\frac{1}{4}_{L^2}
		\Vert \partial_2 \partial_3 Z^\beta B_h \Vert^\frac{1}{4}_{L^2}
		\Vert Z^\gamma \partial_h B \Vert^\frac{1}{2}_{L^2}
		\Vert \partial_1 Z^\gamma \partial_h B \Vert^\frac{1}{2}_{L^2}\\
		&~~~~
		+\sum_{|\alpha| \le m-1}
		\Vert B_h \Vert^\frac{1}{4}_{L^2}
		\Vert \partial_1 B_h \Vert^\frac{1}{4}_{L^2}
		\Vert \partial_2 B_h \Vert^\frac{1}{4}_{L^2}
		\Vert \partial_1 \partial_2 B_h \Vert^\frac{1}{4}_{L^2}
		\Vert Z^\alpha \partial_h B \Vert^\frac{1}{2}_{L^2}
		\Vert \partial_3 Z^\alpha \partial_h B \Vert^\frac{1}{2}_{L^2}\\
		&~~~~
		+
		\sum_{|\alpha| \le m-1} \sum_{\beta+\gamma=\alpha}
		\Vert Z^\beta B_3 \Vert^\frac{1}{4}_{L^2}
		\Vert \partial_1 Z^\beta B_3 \Vert^\frac{1}{4}_{L^2}
		\Vert \partial_3 Z^\beta B_3 \Vert^\frac{1}{4}_{L^2}
		\Vert \partial_1 \partial_3 Z^\beta B_3 \Vert^\frac{1}{4}_{L^2}
		\Vert Z^\gamma \partial_3 B \Vert^\frac{1}{2}_{L^2}
		\Vert \partial_2 Z^\gamma \partial_3 B \Vert^\frac{1}{2}_{L^2}\\
%		&~~~~
%		+\sum_{|\alpha| \le m-1}
%		\Vert B_3 \Vert^\frac{1}{4}_{L^2}
%		\Vert \partial_1 B_3 \Vert^\frac{1}{4}_{L^2}
%		\Vert \partial_3 B_3 \Vert^\frac{1}{4}_{L^2}
%		\Vert \partial_1 \partial_3 B_3 \Vert^\frac{1}{4}_{L^2}
%		\Vert Z^\alpha \partial_3 B \Vert^\frac{1}{2}_{L^2}
%		\Vert \partial_2 Z^\alpha \partial_3 B \Vert^\frac{1}{2}_{L^2}\\
%		&\lesssim
%		\Vert B \Vert_{H^m_{co}}
%		\Vert \partial_3 B \Vert_{H^m_{co}}^\frac{1}{2}
%		\Vert \partial_1 B \Vert_{H^1_{co}}^\frac{1}{2}
%		+
%		\Vert B \Vert_{H^2_{co}}
%		\Vert \partial_3 B \Vert_{H^m_{co}}
%		+
%		\Vert B \Vert_{H^m_{co}}
%		\Vert \partial_3 B \Vert_{H^m_{co}}^\frac{1}{2}
%		\Vert \partial_1 B \Vert_{H^{m-1}_{co}}^\frac{1}{2}
%		+
%		\Vert B \Vert_{H^2_{co}}^\frac{1}{2}
%		\Vert \partial_3 B \Vert_{H^m_{co}}^\frac{3}{2}
%		\\
		&\lesssim
		(\Vert B \Vert_{H^m_{co}}
		+
		\Vert \omega_h^B \Vert_{H^{m-1}_{co}})
		(\Vert \partial_3 B \Vert_{H^m_{co}}
		 +\Vert \partial_1 B \Vert_{H^{m-1}_{co}})
	.
	\end{aligned}
\end{equation}
Similarly, it is easy to check that
\begin{equation}\label{3233}
IV_{4,3}\lesssim
(\Vert B \Vert_{H^m_{co}}
+
\Vert \omega_h^B \Vert_{H^{m-1}_{co}})
(\Vert \partial_3 B \Vert_{H^m_{co}}
+\Vert \partial_1 B \Vert_{H^{m-1}_{co}})
	.
\end{equation}
Applying the divergence free condition $\nabla \cdot u=0$
and Lemma \ref{lemma1}, it is easy to check that
\begin{equation}\label{3234}
	\begin{aligned}
	IV_{4,2}
	&\lesssim
	\sum_{|\alpha| \le m-1}
	\bigg(
	\sum_{\substack{\beta+\gamma=\alpha\\ \beta \neq 0}}
	\Vert Z^\beta u \cdot Z^\gamma \nabla u \Vert_{L^2}
	+
	\Vert u \cdot Z^\alpha \nabla u \Vert_{L^2}
	\bigg)\\
	&\lesssim
	\sum_{|\alpha| \le m-1}
	\sum_{\substack{\beta+\gamma=\alpha\\ \beta \neq 0}}
	\Vert Z^\beta u_h \cdot Z^\gamma \partial_h u \Vert_{L^2}
	+
	\sum_{|\alpha| \le m-1}
	\sum_{\substack{\beta+\gamma=\alpha\\ \beta \neq 0}}
	\Vert Z^\beta u_3 Z^\gamma \partial_3 u \Vert_{L^2}
	\\
	&~~~~
	+\sum_{|\alpha| \le m-1}
	\Vert u_h \cdot Z^\alpha \partial_h u \Vert_{L^2}
	+\sum_{|\alpha| \le m-1}
	\Vert u_3 Z^\alpha \partial_3 u \Vert_{L^2}\\
&\lesssim
(\Vert u \Vert_{H^m_{co}}
+\Vert \omega_h^u \Vert_{H^{m-1}_{co}})
(\Vert \partial_h u \Vert_{H^m_{co}}
+\Vert \partial_h \omega_h^u \Vert_{H^{m-1}_{co}}),
	\end{aligned}
\end{equation}
and
\begin{equation}\label{3235}
	\begin{aligned}
		IV_{4,4}
		\lesssim&
		\sum_{|\alpha| \le m-2}
		\sum_{\beta+\gamma=\alpha}
		\bigg(
		\sum_{i,j=1}^{2}
		\Vert Z^\beta \partial_i u_j Z^\gamma \partial_j u_i \Vert_{L^2}
		+
		\sum_{i=1}^{2}
		\Vert Z^\beta \partial_i u_3 Z^\gamma \partial_3 u_i \Vert_{L^2}\bigg)
		\\
		&+\sum_{|\alpha| \le m-2}
		\sum_{\beta+\gamma=\alpha}
		\bigg(
		\sum_{j=1}^{2}
		\Vert Z^\beta \partial_3 u_j Z^\gamma \partial_j u_3 \Vert_{L^2}
		+
		\Vert Z^\beta \partial_3 u_3 Z^\gamma \partial_3 u_3 \Vert_{L^2}
		\bigg)
		\\
\lesssim& (\Vert u \Vert_{H^m_{co}}
+\Vert \omega_h^u \Vert_{H^{m-1}_{co}})
(\Vert \partial_h u \Vert_{H^m_{co}}
+\Vert \partial_h \omega_h^u \Vert_{H^{m-1}_{co}}).
\end{aligned}
\end{equation}	
The combination of estimates \eqref{3232}, \eqref{3233},
\eqref{3234} and \eqref{3235}, it holds
\begin{equation*}
	\Vert \nabla p \Vert_{H_{co}^{m-1}}
	\lesssim
		(\Vert (u, B) \Vert_{H^m_{co}}+\Vert (\omega_h^u,\omega_h^B) \Vert_{H^{m-1}_{co}})
		(\Vert (\partial_h u,\partial_3 B) \Vert_{H^m_{co}}
		 +\Vert (\partial_h \omega_h^u, \partial_1 B)\Vert_{H^{m-1}_{co}}),
\end{equation*}
which, together with \eqref{3231}, yields directly
\begin{equation*}\label{3236}
IV_4  \lesssim
		(\Vert (u, B) \Vert_{H^m_{co}}+\Vert (\omega_h^u,\omega_h^B) \Vert_{H^{m-1}_{co}})
		(\Vert (\partial_h u,\partial_3 B) \Vert_{H^m_{co}}^2
		 +\Vert (\partial_h \omega_h^u,\partial_1 B)\Vert_{H^{m-1}_{co}}^2).
\end{equation*}
Integrating \eqref{eqGm-1} in time, invoking the bound for $IV_1$ through $IV_6$ and applying equivalent norms
 \eqref{b5} and \eqref{b6}
 to the time integrals, we obtain \eqref{eq1-2}. Here, we give some details.
Indeed, we have
\begin{align*}
	\int_{0}^{t} IV_1 (\tau) d\tau
	\lesssim
	E(0)
	+
	E(t)
	+
	E(t)^\frac{3}{2},
\end{align*}
where we have used the following fact
\begin{equation*}
\sum_{|\alpha| \le m-1}
		\bigg|
		\int_{0}^{t}
		\frac{d}{d\tau}  \int_{\mathbb{R}_+^3}
		Z^\alpha u(\tau,x) \cdot Z^\alpha \partial_1 B(\tau,x) dx d \tau
		\bigg|
%		\\
%		&\le \sum_{|\alpha| \le m-1}
%		\Vert Z^\alpha u (0) \Vert_{L^2}
%		\Vert Z^\alpha \partial_1 B (0) \Vert_{L^2}
%		+\sum_{|\alpha| \le m-1}
%		\Vert Z^\alpha u (t) \Vert_{L^2}
%		\Vert Z^\alpha \partial_1 B (t) \Vert_{L^2}
%		\\
%		&\le
%		\Vert u(0) \Vert_{H^m_{co}}
%		\Vert B(0) \Vert_{H^m_{co}}
%		+
%		\Vert u(t) \Vert_{H^m_{co}}
%		\Vert B(t) \Vert_{H^m_{co}}
\le E(0)+E(t).
\end{equation*}
The integrals of $IV_3$ and $IV_5$ are slightly different. Indeed, we have
\begin{equation*}
\begin{aligned}
&\int_{0}^{t} IV_3(\tau) d\tau
\le E(t)^\frac{1}{2} G(t)^\frac{1}{2}\le \frac{1}{4}G(t)+ CE(t),\\
&\int_{0}^{t} IV_5(\tau) d\tau \lesssim
E(t)^\frac{1}{2}E(t)^\frac{1}{2}
+ E(t)^\frac{1}{2}G(t)^\frac{1}{2}
\le\frac{1}{4}G(t)+ CE(t),
\end{aligned}
\end{equation*}
and
\begin{equation*}
\int_{0}^{t} (IV_2(\tau)+IV_4(\tau)+IV_6(\tau)) d\tau
\lesssim
E(t)^\frac{3}{2}+G(t)^\frac{3}{2}.
\end{equation*}
Combining all the bounds above gives
\begin{equation*}
	\begin{aligned}
		G(t) \le \frac{1}{2} G(t) + C E(0) + C E(t) + C E(t)^\frac{3}{2} + C G(t)^\frac{3}{2}.
	\end{aligned}
\end{equation*}
This completes the proof of \eqref{eq1-2}.

\section{Asymptotic behavior of global solution}\label{asymptotic-behavior}

In this section, we will provide some convergence rate for the
anisotropic incompressible Navier-Stokes equation.
First of all, based on the global-in-time uniform estimate \eqref{uniform_estimate}
in Theorem \ref{th1}, one can establish the estimate \eqref{ns-estimate}
under the small initial data condition \eqref{small-01}.
Thus, our target here is to establish the decay rate estimate
of global solution for the anisotropic Navier-Stokes equation.
This time decay rate will play an important role in the convergence rate of global
solution with respect to $\varepsilon$.
Secondly, we will establish the asymptotic behavior
with respect to parameter $\ep$ and time $t$.
In this section, we will provide some convergence rate for the following
anisotropic incompressible Navier-Stokes equation
\begin{equation}\label{eq51}
\left\{\begin{array}{*{4}{ll}}
\partial_t u^\var + u^\var \cdot \nabla u^\var - \Delta_h u^\var + \nabla p^\var
= \varepsilon \partial_3^2 u^\var
& {\rm in} ~~\mathbb{R}_+^3,\\
\nabla \cdot u^\var = 0 & {\rm in} ~~\mathbb{R}_+^3,\\
	 		u_3^\var = 0,\quad \partial_3 u_h^\var=0
\quad & {\rm on} ~~ \mathbb{R}^2 \times \{x_3=0\},\\
u^\var|_{t=0}=u_0 & {\rm in} ~~\mathbb{R}_+^3.
	 	\end{array}\right.
	 \end{equation}
As $\var \rightarrow 0^+$, then $(u^\var, \nabla p^\var)$ will
converge to the solution $(u^0, \nabla p^0)$ satisfying
\begin{equation}\label{eq52}
\left\{\begin{array}{*{4}{ll}}
\partial_t u^0 + u^0 \cdot \nabla u^0 - \Delta_h u^0 + \nabla p^0=0
& {\rm in} ~~\mathbb{R}_+^3,\\
\nabla \cdot u^0 = 0 & {\rm in} ~~\mathbb{R}_+^3,\\
	 		u_3^0 =0
\quad & {\rm on} ~~\mathbb{R}^2 \times \{ x_3=0 \},\\
u^0|_{t=0}=u_0 & {\rm in} ~~\mathbb{R}_+^3.
	 	\end{array}\right.
	 \end{equation}
Then, we will establish uniform (with respect to $\ep$) decay rate for the
global solution of system \eqref{eq51}.

\subsection{Decay rate of global solution}

For notational convenience, we drop the superscript $\ep$ throughout this section.
Let us define
\begin{equation}\label{index}
\sigma\overset{def}{=}s-\frac{14s-13}{6(2-s)}
~\text{and}~ s \in \bigg(\frac{13}{14}, 1\bigg).
\end{equation}
For any small positive constant $\delta>0$, let us assume
\begin{equation}\label{decay-assumption}
(1+t)^s\left(\|u\|_{H_{tan}^m}^2+\|\omega_h^u\|_{H_{tan}^{m-1}}^2\right)
 +\int_0^t (1+\tau)^{\sigma}
   \left(\|\partial_h u\|_{H_{tan}^m}^2+\|\partial_h  \omega_h^u\|_{H_{tan}^{m-1}}^2\right)d\tau
\le \delta,
\end{equation}
for all $t \in (0, T]$.
Then, we will establish some energy estimates under the assumption \eqref{decay-assumption}.
\begin{lemm}\label{lemma51}
Under the assumptions of \eqref{index} and \eqref{decay-assumption},
the global solution of system \eqref{eq51} will obey
\begin{equation}\label{511}
\|\Lambda^{-s}_h u(t)\|_{L^2}^2
+\int_0^t \|\partial_h \Lambda^{-s}_h u\|_{L^2}^2 d\tau
+\varepsilon \int_0^t \|\partial_3 \Lambda^{-s}_h u\|_{L^2}^2 d\tau
\le   2\|\Lambda^{-s}_h u_0\|_{L^2}^2+C_{s,\sigma} \delta^2,
\end{equation}
where $C_{s,\sigma}$ is a constant independent of time.
\end{lemm}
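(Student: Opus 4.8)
The plan is to run a weighted $\Lambda_h^{-s}$ energy estimate on the momentum equation in \eqref{eq51} and to reduce the whole lemma to the control of one nonlinear term. First I would apply the horizontal multiplier $\Lambda_h^{-s}$ to $\eqref{eq51}_1$, take the $L^2(\mathbb{R}_+^3)$ inner product with $\Lambda_h^{-s}u$, and integrate by parts. Since $\Lambda_h^{-s}$ acts only in the horizontal variables it commutes with $\partial_t$, $\Delta_h$, $\partial_3^2$, $\nabla$ and with the divergence-free constraint, so $\nabla\cdot(\Lambda_h^{-s}u)=\Lambda_h^{-s}(\nabla\cdot u)=0$ and the pressure term drops out after integration by parts. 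The boundary contributions produced by $-\Delta_h$ and $\varepsilon\partial_3^2$ vanish because $u_3|_{x_3=0}=0$ and $\partial_3 u_h|_{x_3=0}=0$, conditions preserved by the horizontal operator $\Lambda_h^{-s}$. This yields the differential identity
\begin{equation*}
\frac12\frac{d}{dt}\|\Lambda_h^{-s}u\|_{L^2}^2
+\|\partial_h\Lambda_h^{-s}u\|_{L^2}^2
+\varepsilon\|\partial_3\Lambda_h^{-s}u\|_{L^2}^2
=-\int_{\mathbb{R}_+^3}\Lambda_h^{-s}(u\cdot\nabla u)\cdot\Lambda_h^{-s}u\,dx,
\end{equation*}
so that after integrating in time the lemma rests entirely on estimating $\int_0^t|N|\,d\tau$, where $N$ is the right-hand side.

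To bound $N$ I would write $u\cdot\nabla u=\partial_h\cdot(u_h\otimes u)+\partial_3(u_3u)$ using $\nabla\cdot u=0$ and treat the two pieces separately. For the horizontal piece I would transfer the $\partial_h$ onto $\Lambda_h^{-s}u$, producing $\partial_h\Lambda_h^{-s}u$, which is precisely the horizontal dissipation on the left, and estimate $\Lambda_h^{-s}(u_h\otimes u)$ in $L^2$. The principal tool is the Hardy–Littlewood–Sobolev boundedness of $\Lambda_h^{-s}$ from $L^{2/(1+s)}_{x_h}$ to $L^2_{x_h}$ for $0<s<1$; combining this with Hölder in $x_h$, the two-dimensional Gagliardo–Nirenberg inequality $\|f\|_{L^{2/s}_{x_h}}\lesssim\|f\|_{L^2_{x_h}}^{s}\|\partial_h f\|_{L^2_{x_h}}^{1-s}$, Minkowski in $x_3$, and the anisotropic Sobolev and conormal interpolation inequalities of Lemma \ref{lemma1} and \eqref{a5}, turns the product into norms of $u$ and $\partial_h u$. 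A Young inequality then absorbs a small multiple of $\|\partial_h\Lambda_h^{-s}u\|_{L^2}^2$ into the left-hand side.

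The vertical piece is the main obstacle, because \eqref{eq51} carries no $O(1)$ vertical dissipation of $u$ (only the weak $\varepsilon\partial_3^2u$), so one cannot integrate $\partial_3$ by parts and land on an uncontrolled $\partial_3\Lambda_h^{-s}u$. The structural remedy is to use the divergence-free condition together with $u_3|_{x_3=0}=0$ to write $u_3(x_h,x_3)=-\int_0^{x_3}\partial_h\cdot u_h(x_h,y_3)\,dy_3$, so that the vertical velocity always carries one horizontal derivative. Substituting this representation converts the dangerous vertical-transport contribution into quantities governed by $\partial_h u$, which belongs to the dissipation; the resulting factors are again handled by Lemma \ref{lemma1}. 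This is the analogue for $u$ of the estimates \eqref{524}–\eqref{528} used for $\omega_h^u$.

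Finally I would close the time integral. The supremum part of the ansatz \eqref{decay-assumption} gives $\|u(\tau)\|_{H^m_{tan}}^2+\|\omega_h^u(\tau)\|_{H^{m-1}_{tan}}^2\lesssim\delta(1+\tau)^{-s}$, while the weighted part controls $\int_0^t(1+\tau)^\sigma(\|\partial_h u\|_{H^m_{tan}}^2+\|\partial_h\omega_h^u\|_{H^{m-1}_{tan}}^2)\,d\tau\le\delta$. Feeding these into the previous two paragraphs, the delicate point is the bookkeeping of the time weights: the negative powers of $(1+\tau)$ furnished by the decay rates, multiplied by the inverse weight $(1+\tau)^{-\sigma}$ extracted from the weighted dissipation, must be integrable in $\tau$ uniformly in $t$, and this is exactly what the choice $\sigma=s-\frac{14s-13}{6(2-s)}$ in \eqref{index} guarantees, forcing the range $s\in(\frac{13}{14},1)$. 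Collecting the estimates yields $\int_0^t|N|\,d\tau\le\frac12\int_0^t\|\partial_h\Lambda_h^{-s}u\|_{L^2}^2\,d\tau+C_{s,\sigma}\delta^2$, and absorbing the dissipation and time-integrating the identity above gives \eqref{511}.
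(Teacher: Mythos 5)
Your opening and closing steps coincide with the paper's proof: the energy identity you derive is exactly \eqref{512} (the pressure and boundary terms vanish for the reasons you give), and your weighted-in-time H\"older bookkeeping with the exponent $\sigma$ from \eqref{index} is precisely the content of \eqref{515}--\eqref{516}. The horizontal piece of the nonlinearity is also fine and uses the same tools as the paper (slice-wise Hardy--Littlewood--Sobolev from Lemma \ref{H-L}, H\"older in $x_h$, and the anisotropic inequality \eqref{a9}); whether you integrate $\partial_h$ by parts onto $\Lambda_h^{-s}u$ and apply Young, as you do, or pair $u_h\cdot\partial_h u$ directly against $\Lambda_h^{-s}u$, as in \eqref{513}, is immaterial.

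The gap is in your treatment of the vertical transport term, and it is twofold. First, the obstacle you identify is not present in this lemma: after a direct slice-wise H\"older estimate the factor that appears is $\|\partial_3 u\|_{L^2}$ with no negative-order operator on it, and this \emph{is} controlled by the ansatz \eqref{decay-assumption}, because $\partial_3 u_1=\omega_2^u+\partial_1 u_3$, $\partial_3 u_2=\partial_2 u_3-\omega_1^u$ and $\partial_3 u_3=-\partial_h\cdot u_h$ give $\|\partial_3 u\|_{L^2}\lesssim \|u\|_{H^1_{tan}}+\|\omega_h^u\|_{L^2}\lesssim \delta^{\frac12}(1+\tau)^{-\frac{s}{2}}$. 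This is exactly how the paper proceeds in \eqref{513} and \eqref{516}: $u_3\partial_3 u$ is bounded by $\|u_3\|_{L^{2/s}(\mathbb{R}^2)}\|\partial_3 u\|_{L^2(\mathbb{R}^2)}\|\Lambda_h^{-s}u\|_{L^2(\mathbb{R}^2)}$, with the $\partial_3 u$ factor sent to the decaying norms, and no structural trick is needed. Second, your proposed remedy does not work as stated: substituting $u_3=-\int_0^{x_3}\partial_h\cdot u_h\,dy_3$ globally in $x_3$ produces a Hardy-type factor bounded only by $x_3^{1/2}\bigl\|\partial_h\cdot u_h\bigr\|_{L^2_{x_3}}$, and the weight $x_3^{1/2}$ is not integrable against any norm available in this framework (there are no $x_3$-weighted norms); moreover the substitution replaces only the $u_3$ factor and leaves $\partial_3 u$ untouched, so it does not, by itself, ``convert the contribution into quantities governed by $\partial_h u$''. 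The device you are invoking---estimates \eqref{524}--\eqref{528}---is genuinely needed only in Lemma \ref{lemma52}, where the truly uncontrolled object $\partial_3\omega_h^u$ appears, and even there it requires the cutoff $\chi(x_3)$ near the boundary together with the conversion of $\partial_3$ into the conormal field $Z_3=\varphi\partial_3$ away from it, neither of which your global representation provides.
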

\begin{proof}
From the Eq.\eqref{eq51}, it holds
\begin{equation}\label{512}
\begin{aligned}
\frac12\frac{d}{dt} \|\Lambda^{-s}_h u\|_{L^2}^2
+\|\partial_h \Lambda^{-s}_h u\|_{L^2}^2
+\varepsilon \|\partial_3 \Lambda^{-s}_h u\|_{L^2}^2
=\int_{\mathbb{R}_+^3} \Lambda^{-s}_h(u\cdot \nabla u)\cdot \Lambda^{-s}_h u dx.
\end{aligned}
\end{equation}
Using H\"{o}lder's inequality, inequality \eqref{a16} in Lemma \ref{H-L} and
inequality \eqref{a9} in Lemma \ref{lemma1}, we have
\begin{equation}\label{513}
\begin{aligned}
&~~~~\int_{\mathbb{R}_+^3} \Lambda^{-s}_h(u\cdot \nabla u)\cdot \Lambda^{-s}_h u dx\\
&\lesssim \int_0^{+\infty}
\left(\|u_h \partial_h u\|_{L^{\frac{1}{\frac12+\frac{s}{2}}}(\mathbb{R}^2)}
+\|u_3 \partial_3 u\|_{L^{\frac{1}{\frac12+\frac{s}{2}}}(\mathbb{R}^2)}\right)
\|\Lambda_h^{-s} u\|_{L^2(\mathbb{R}^2)}dx_3\\
&\lesssim
\int_0^{+\infty}
\|u_h \|_{L^{\frac{2}{s}}(\mathbb{R}^2)}
\| \partial_h u\|_{L^2(\mathbb{R}^2)}
\|\Lambda_h^{-s} u\|_{L^2(\mathbb{R}^2)}dx_3
+
\int_0^{+\infty}
\|u_3 \|_{L^{\frac{2}{s}}(\mathbb{R}^2)}
\| \partial_3 u\|_{L^2(\mathbb{R}^2)}
\|\Lambda_h^{-s} u\|_{L^2(\mathbb{R}^2)}dx_3
\\
&\lesssim
\left\|\|u_h\|_{L^\infty(\mathbb{R}_+)}\right\|_{L^{\frac2s}(\mathbb{R}^2)}
\|\partial_h u\|_{L^2}
\|\Lambda^{-s}_h u\|_{L^2}
+
\left\|\|u_3\|_{L^\infty(\mathbb{R}_+)}\right\|_{L^{\frac2s}(\mathbb{R}^2)}
\|\partial_3 u\|_{L^2}
\|\Lambda^{-s}_h u\|_{L^2}
\\
%&\lesssim\int_0^{+\infty}\left\|\|u_h\|_{L^\infty(\mathbb{R}_+)}\right\|_{L^{\frac2s}(\mathbb{R}^2)}\|\partial_h u\|_{L^2(\mathbb{R}^2)}
%\|\Lambda^{-s}_h u\|_{L^2(\mathbb{R}^2)}dx_3\\
%&~~~~+\int_0^{+\infty}
%\left\|\|u_3\|_{L^\infty(\mathbb{R}_+)}\right\|_{L^{\frac2s}(\mathbb{R}^2)}
%\|\partial_3 %u\|_{L^2(\mathbb{R}^2)}
%\|\Lambda^{-s}_h %u\|_{L^2(\mathbb{R}^2)}dx_3\\
&\lesssim
(\|u_h\|_{L^2}\|\partial_2 u_h\|_{L^2}
           +\|\partial_1 u_h\|_{L^2}\|\partial_{12} u_h\|_{L^2})^{\frac{1-s}{2}}
           \|u_h\|_{L^2}^{\frac{2s-1}{2}}
           \|\partial_3 u_h\|_{L^2}^{\frac12}\|\partial_h u\|_{L^2}
           \|\Lambda^{-s}_h u\|_{L^2}\\
&~~~~+(\|u_3\|_{L^2}\|\partial_2 u_3\|_{L^2}
           +\|\partial_1 u_3\|_{L^2}\|\partial_{12} u_3\|_{L^2})^{\frac{1-s}{2}}
           \|u_3\|_{L^2}^{\frac{2s-1}{2}}
           \|\partial_3 u_3\|_{L^2}^{\frac12}
\|\partial_3 u\|_{L^2}
\|\Lambda^{-s}_h u\|_{L^2}\\
&\lesssim \|(u_h, \partial_1 u_h, \partial_3 u_h)\|_{L^2}^{\frac{1+s}{2}}
          \|\partial_h u\|_{H^1_{tan}}^{\frac{3-s}{2}}
          \|\Lambda^{-s}_h u\|_{L^2}
          +\|(u_3, \partial_1 u_3, \partial_3 u)\|_{L^2}^{\frac{2+s}{2}}
          \|\partial_h u\|_{H^1_{tan}}^{\frac{2-s}{2}}
          \|\Lambda^{-s}_h u\|_{L^2}.
\end{aligned}
\end{equation}
Substituting the estimate \eqref{513} into \eqref{512}
and integrating  over $[0, t]$, we have
\begin{equation}\label{514}
\begin{aligned}
&~~~~\frac12 \|\Lambda^{-s}_h u(t)\|_{L^2}^2
+\int_0^t \|\partial_h \Lambda^{-s}_h u\|_{L^2}^2 d\tau
+\varepsilon \int_0^t \|\partial_3 \Lambda^{-s}_h u\|_{L^2}^2 d\tau\\
&\lesssim  \frac12 \|\Lambda^{-s}_h u_0\|_{L^2}^2
     +\underset{t}{\sup}\|\Lambda^{-s}_h u\|_{L^2}
     \int_0^t  \|(u_h, \partial_1 u_h, \partial_3 u_h)\|_{L^2}^{\frac{1+s}{2}}
          \|\partial_h u\|_{H^1_{tan}}^{\frac{3-s}{2}} d\tau\\
&\quad +\underset{t}{\sup}\|\Lambda^{-s}_h u\|_{L^2}
     \int_0^t  \|(u_3, \partial_1 u_3, \partial_3 u)\|_{L^2}^{\frac{2+s}{2}}
          \|\partial_h u\|_{H^1_{tan}}^{\frac{2-s}{2}} d\tau.
\end{aligned}
\end{equation}
Using the estimate \eqref{decay-assumption} and H\"{o}lder's inequality, we have
\begin{equation}\label{515}
\begin{aligned}
&~~~~\int_0^t  \|(u_h, \partial_1 u_h, \partial_3 u_h)\|_{L^2}^{\frac{1+s}{2}}
          \|\partial_h u\|_{H^1_{tan}}^{\frac{3-s}{2}} d\tau\\
&\le
\left\{\int_0^t  \|(u_h, \partial_1 u_h, \partial_3 u_h)\|_{L^2}^2
         (1+\tau)^{-\frac{3-s}{1+s}\sigma} d\tau\right\}^{\frac{1+s}{4}}
 \left\{\int_0^t  \|\partial_h u\|_{H^1_{tan}}^2 (1+\tau)^{\sigma}
   d\tau\right\}^{\frac{3-s}{4}}\\
&\lesssim
\delta \left\{\int_0^t (1+\tau)^{-s-\frac{3-s}{1+s}\sigma} d\tau\right\}^{\frac{1+s}{4}}
\lesssim
C_{s,\sigma}\delta,
\end{aligned}
\end{equation}
and
\begin{equation}\label{516}
\begin{aligned}
&~~~~\int_0^t  \|(u_3, \partial_1 u_3, \partial_3 u)\|_{L^2}^{\frac{2+s}{2}}
          \|\partial_h u\|_{H^1_{tan}}^{\frac{2-s}{2}} d\tau\\
&\lesssim
\left\{\int_0^t  \|(u_3, \partial_1 u_3, \partial_3 u)\|_{L^2}^2
         (1+\tau)^{-\frac{2-s}{2+s}\sigma} d\tau\right\}^{\frac{2+s}{4}}
 \left\{\int_0^t  \|\partial_h u\|_{H^1_{tan}}^2 (1+\tau)^{\sigma}
   d\tau\right\}^{\frac{2-s}{4}}\\
&\lesssim
\delta \left\{\int_0^t (1+\tau)^{-s-\frac{2-s}{2+s}\sigma} d\tau\right\}^{\frac{2+s}{4}}
\lesssim
C_{s,\sigma}\delta.
\end{aligned}
\end{equation}
Substituting the estimates \eqref{515} and \eqref{516}
into \eqref{514} and using the Cauchy's inequality, it holds
\begin{equation}
\|\Lambda^{-s}_h u\|_{L^2}^2
+\int_0^t \|\partial_h \Lambda^{-s}_h u\|_{L^2}^2 d\tau
+\varepsilon \int_0^t \|\partial_3 \Lambda^{-s}_h u\|_{L^2}^2 d\tau
\le   2\|\Lambda^{-s}_h u_0\|_{L^2}^2+C_{s,\sigma} \delta^2.
\end{equation}
Therefore, we complete the proof of this lemma.
\end{proof}

\begin{lemm}\label{lemma52}
Under the assumptions of \eqref{index} and \eqref{decay-assumption},
the global solution of system \eqref{eq51} will obey
\begin{equation}\label{521}
\|\Lambda_h^{-s} \omega^u_h\|_{L^2}^2
+\int_0^t \|\partial_h \Lambda_h^{-s} \omega^u_h\|_{L^2}^2 d\tau
+\varepsilon \int_0^t  \|\partial_3 \Lambda_h^{-s} \omega^u_h\|_{L^2}^2 d\tau
\le
2(\|\Lambda_h^{-s} (\omega^u_h)|_{t=0}\|_{L^2}^2+E(0)^2)+C_{s,\sigma} \delta^2,
\end{equation}
where $C_{s,\sigma}$ is a constant independent of time.
\end{lemm}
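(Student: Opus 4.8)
The plan is to mirror the proof of Lemma~\ref{lemma51}, replacing the velocity equation \eqref{eq51} by the evolution equation for the horizontal vorticity $\omega_h^u$. Setting $B=0$ in \eqref{eq2}$_1$, the vorticity of the Navier--Stokes flow obeys
\begin{equation*}
\partial_t\omega_h^u+u\cdot\nabla\omega_h^u-\Delta_h\omega_h^u-\varepsilon\partial_3^2\omega_h^u=\omega^u\cdot\nabla u_h,\qquad \omega_h^u|_{x_3=0}=0.
\end{equation*}
First I would apply $\Lambda_h^{-s}$ and take the $L^2$ inner product with $\Lambda_h^{-s}\omega_h^u$. Because $\Lambda_h^{-s}$ acts only in the horizontal variables and $\omega_h^u$ vanishes on $\{x_3=0\}$, the boundary contribution from $-\varepsilon\partial_3^2$ drops out and the dissipation is nonnegative, producing the identity
\begin{equation*}
\tfrac12\tfrac{d}{dt}\|\Lambda_h^{-s}\omega_h^u\|_{L^2}^2+\|\partial_h\Lambda_h^{-s}\omega_h^u\|_{L^2}^2+\varepsilon\|\partial_3\Lambda_h^{-s}\omega_h^u\|_{L^2}^2=-\!\int_{\mathbb{R}_+^3}\!\Lambda_h^{-s}(u\cdot\nabla\omega_h^u)\Lambda_h^{-s}\omega_h^u\,dx+\!\int_{\mathbb{R}_+^3}\!\Lambda_h^{-s}(\omega^u\cdot\nabla u_h)\Lambda_h^{-s}\omega_h^u\,dx.
\end{equation*}

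The stretching term $\int\Lambda_h^{-s}(\omega^u\cdot\nabla u_h)\cdot\Lambda_h^{-s}\omega_h^u\,dx$ I would treat exactly as the nonlinear term in \eqref{513}: split according to the horizontal and vertical components of $\omega^u$, estimate $\Lambda_h^{-s}$ of each product by the Hardy--Littlewood--Sobolev bound \eqref{a16} in Lemma~\ref{H-L} on $\mathbb{R}^2$, and close with the anisotropic interpolation \eqref{a9} of Lemma~\ref{lemma1}. This yields a bound of the form $\|\omega_h^u\|_{L^2}^{\theta}\|\partial_h u\|_{H^1_{tan}}^{2-\theta}\|\Lambda_h^{-s}\omega_h^u\|_{L^2}$, which after integration in $\tau$ is handled by H\"older against the weight $(1+\tau)^\sigma$ together with the decay assumption \eqref{decay-assumption}, precisely as in \eqref{515}--\eqref{516}, contributing $\lesssim C_{s,\sigma}\delta^2$.

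The main obstacle is the transport term $-\int\Lambda_h^{-s}(u\cdot\nabla\omega_h^u)\cdot\Lambda_h^{-s}\omega_h^u\,dx$, flagged as the most delicate point in Section~\ref{difficulty}. Using $\nabla\cdot u=0$ I would rewrite $u\cdot\nabla\omega_h^u=\sum_{i=1}^2\partial_i(u_i\omega_h^u)+\partial_3(u_3\omega_h^u)$. The horizontal pieces $(i=1,2)$ commute with $\Lambda_h^{-s}$ and, after integration by parts in $x_i$, pair $\Lambda_h^{-s}(u_i\omega_h^u)$ (controlled by \eqref{a16} and anisotropic Sobolev) with the good dissipation $\partial_h\Lambda_h^{-s}\omega_h^u$. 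The genuinely hard piece is the vertical one: the only available vertical smoothing is the $\varepsilon$-weighted quantity $\varepsilon\|\partial_3\Lambda_h^{-s}\omega_h^u\|_{L^2}^2$, which cannot absorb a full $\partial_3$ without a fatal factor $\varepsilon^{-1}$. Here I would exploit $u_3|_{x_3=0}=0$ together with $\partial_3u_3=-\nabla_h\cdot u_h$ to trade the vertical derivative for horizontal derivatives of $u_h$, and combine this with the conormal interpolation \eqref{a5} and the time-weighted degeneracy bound $\varepsilon\int_0^t(1+\tau)^\sigma\|\partial_3 u\|_{H^m_{tan}}^2\,d\tau\lesssim1$, which follows from the uniform estimate \eqref{ns-estimate} and the decay rate. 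The price of invoking \eqref{ns-estimate} is a contribution bounded by the uniform energy, which is exactly what produces the $E(0)^2$ term on the right of \eqref{521}.

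Finally, I would integrate the energy identity over $[0,t]$, insert the two bounds, and absorb the factors $\|\Lambda_h^{-s}\omega_h^u\|_{L^2}$ (and $\varepsilon^{1/2}\|\partial_3\Lambda_h^{-s}\omega_h^u\|_{L^2}$ where they appear) into the left-hand side by Young's inequality, together with the initial datum $\|\Lambda_h^{-s}(\omega_h^u)|_{t=0}\|_{L^2}^2$, the uniform-energy contribution $E(0)^2$, and the nonlinear remainder $C_{s,\sigma}\delta^2$, giving \eqref{521}. I expect the vertical transport term to be the crux: balancing the absence of an unweighted vertical dissipation against the negative horizontal regularity $\Lambda_h^{-s}$ is what forces the range $s\in(\tfrac{13}{14},1)$ and the specific weight $\sigma$ fixed in \eqref{index}.
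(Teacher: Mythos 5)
Your proposal follows essentially the same route as the paper's proof: the identical $\Lambda_h^{-s}$ energy identity \eqref{522}, the same Hardy--Littlewood--Sobolev \eqref{a16} plus anisotropic interpolation \eqref{a9} treatment of the stretching term with time-weighted H\"older against \eqref{decay-assumption}, and---at the crux---the same mechanism for the vertical transport piece, namely the vanishing of $u_3$ on $\{x_3=0\}$, the divergence-free trade $\partial_3 u_3=-\nabla_h\cdot u_h$, and the conormal interpolation \eqref{a5} converting $u_3\partial_3\omega_h^u$ into $Z_3\omega_h^u$-type quantities (this is exactly the cutoff decomposition behind \eqref{524}), with the uniform bound \eqref{ns-estimate} on $\sup_t\|(\omega_h^u,Z_3^3\omega_h^u)\|_{L^2}$ producing the $E(0)^2$ term, just as in \eqref{528}. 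One caution: the time-weighted degeneracy bound $\varepsilon\int_0^t(1+\tau)^\sigma\|\partial_3 u\|_{H^m_{tan}}^2\,d\tau\lesssim 1$ that you invoke is the content of Lemma \ref{lemma54}, whose proof in the paper relies on Lemma \ref{lemma53} and hence on the present lemma, so citing it as a known fact would be circular; you would need to derive it under the bootstrap assumption \eqref{decay-assumption} by multiplying the independently proven inequality \eqref{532} by $(1+t)^\sigma$ and integrating, or---as the paper in fact does---dispense with it entirely, since the transport estimate closes using only the horizontal dissipation integrals contained in \eqref{decay-assumption} (note that the mixed derivative $\|\partial_h\partial_3 u_h\|_{L^2}$ appearing in \eqref{524} is controlled by $\|\partial_h\omega_h^u\|_{L^2}+\|\partial_h^2 u_3\|_{L^2}$, so no unweighted vertical dissipation is ever required).
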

\begin{proof}
Taking vorticity operator to the Eq.\eqref{eq51}, then it holds
\begin{equation}\label{522}
\begin{aligned}
&~~~~\frac12\frac{d}{dt}\|\Lambda_h^{-s} \omega^u_h\|_{L^2}^2
+\|\partial_h \Lambda_h^{-s} \omega^u_h\|_{L^2}^2
+\varepsilon \|\partial_3 \Lambda_h^{-s} \omega^u_h\|_{L^2}^2\\
&=\int_{\mathbb{R}_+^3} \Lambda_h^{-s}(\omega^u \cdot \nabla u_h)\cdot \Lambda_h^{-s} \omega^u_h dx
-\int_{\mathbb{R}_+^3} \Lambda_h^{-s}(u\cdot \nabla \omega^u_h)\cdot \Lambda_h^{-s} \omega^u_h dx.
\end{aligned}
\end{equation}
The H\"{o}lder's inequality and  \eqref{a16} in Lemma \ref{H-L} yield directly
\begin{equation}\label{523}
\begin{aligned}
&~~~~\int_{\mathbb{R}_+^3} \Lambda_h^{-s}(u\cdot \nabla \omega^u_h)\cdot \Lambda_h^{-s} \omega^u_h dx\\
&\lesssim \int_0^{+\infty}
\left(\|u_h \partial_h \omega^u_h\|_{L^{\frac{1}{\frac12+\frac{s}{2}}}(\mathbb{R}^2)}
 +\|u_3 \partial_3 \omega^u_h\|_{L^{\frac{1}{\frac12+\frac{s}{2}}}(\mathbb{R}^2)}\right)
\|\Lambda_h^{-s} \omega^u_h\|_{L^2(\mathbb{R}^2)}dx_3.
\end{aligned}
\end{equation}
Obviously, it holds
\begin{equation}\label{525}
\begin{aligned}
&~~~~\int_0^{+\infty}
\|u_h \partial_h \omega^u_h\|_{L^{\frac{1}{\frac12+\frac{s}{2}}}(\mathbb{R}^2)}
\|\Lambda_h^{-s} \omega^u_h\|_{L^2(\mathbb{R}^2)}dx_3\\
&\lesssim
\int_0^{+\infty} \|u_h \|_{L^{\frac{2}{s}}(\mathbb{R}^2)}
\|\partial_h \omega^u_h\|_{L^2(\mathbb{R}^2)}
\|\Lambda_h^{-s} \omega^u_h\|_{L^2(\mathbb{R}^2)}dx_3\\
&\lesssim
\left\|\|u_h \|_{L^\infty(\mathbb{R}_+)}\right\|_{L^{\frac{2}{s}}}
\|\partial_h \omega^u_h\|_{L^2}
\|\Lambda_h^{-s} \omega^u_h\|_{L^2}\\
&\lesssim
 \left(\|u_h\|_{L^2}\|\partial_2 u_h\|_{L^2}
  +\|\partial_1 u_h\|_{L^2}\|\partial_{12}u_h\|_{L^2}\right)^{\frac{1-s}{2}}
 \|u_h\|_{L^2}^{\frac{2s-1}{2}}
 \|\partial_3 u_h\|_{L^2}^{\frac12}
 \|\partial_h \omega^u_h\|_{L^2}
 \|\Lambda_h^{-s} \omega^u_h\|_{L^2}\\
&\lesssim \|(u_h, \partial_1 u_h, \partial_3 u_h)\|_{L^2}^{\frac{1+s}{2}}
          \|(\partial_2 u_h, \partial_{12}u_h, \partial_h \omega^u_h)\|_{L^2}^{\frac{3-s}{2}}
          \|\Lambda_h^{-s} \omega^u_h\|_{L^2}.
\end{aligned}
\end{equation}
Now, let us deal with the difficult term $\int_0^{+\infty}
\|u_3 \partial_3 \omega^u_h\|_{L^{\frac{1}{\frac12+\frac{s}{2}}}(\mathbb{R}^2)}
\|\Lambda_h^{-s} \omega^u_h\|_{L^2(\mathbb{R}^2)}dx_3$.
Indeed, it is easy to check that
\begin{equation*}
\begin{aligned}
u_3 \partial_3 \omega^u_h
&=u_3 \partial_3 \omega^u_h \chi(x_3)+u_3 \partial_3 \omega^u_h (1-\chi(x_3))\\
&=\int_0^{x_3} \partial_3 u_3 d\xi \partial_3 \omega^u_h \chi(x_3)
 +\varphi^{-1}u_3 \varphi\partial_3 \omega^u_h (1-\chi(x_3)),
\end{aligned}
\end{equation*}
which yields directly
\begin{equation*}
|u_3 \partial_3 \omega^u_h|
\lesssim \|\partial_3 u_3\|_{L^\infty(\mathbb{R}_+)}|Z_3 \omega^u_h|
    +|u_3||Z_3 \omega^u_h|.
\end{equation*}
Here, $\chi(x_3)$ is a smooth cutoff function supported
near the boundary $x_3=0$.
Then, this decomposition yields directly
\begin{equation}\label{524}
\begin{aligned}
&~~~~\int_0^{+\infty}
\|u_3 \partial_3 \omega^u_h\|_{L^{\frac{1}{\frac12+\frac{s}{2}}}(\mathbb{R}^2)}
\|\Lambda_h^{-s} \omega^u_h\|_{L^2(\mathbb{R}^2)}dx_3\\
&\lesssim\int_0^{+\infty}
\left(\left\|\|\partial_3 u_3\|_{L^\infty(\mathbb{R}_+)}\right\|_{L^{\frac2s}(\mathbb{R}^2)}
+\left\|\|u_3\|_{L^\infty(\mathbb{R}_+)}\right\|_{L^{\frac2s}(\mathbb{R}^2)}\right)
\|Z_3 \omega^u_h\|_{L^2(\mathbb{R}^2)}
\|\Lambda_h^{-s} \omega^u_h\|_{L^2(\mathbb{R}^2)}dx_3\\
&\lesssim\left(\left\|\|\partial_3 u_3\|_{L^\infty(\mathbb{R}_+)}\right\|_{L^{\frac2s}(\mathbb{R}^2)}
+\left\|\|u_3\|_{L^\infty(\mathbb{R}_+)}\right\|_{L^{\frac2s}(\mathbb{R}^2)}\right)
\|Z_3 \omega^u_h\|_{L^2}\|\Lambda_h^{-s} \omega^u_h\|_{L^2}\\
&\lesssim\left(\left\|\|\partial_3 u_3\|_{L^\infty(\mathbb{R}_+)}\right\|_{L^{\frac2s}(\mathbb{R}^2)}
+\left\|\|u_3\|_{L^\infty(\mathbb{R}_+)}\right\|_{L^{\frac2s}(\mathbb{R}^2)}\right)
\|\Lambda_h^{-s} \omega^u_h\|_{L^2}\\
&~~~~\times \left(\|\omega^u_h\|_{L^2}
          +\|\omega^u_h\|_{L^2}^{\frac34}\|Z_3^3 \omega^u_h\|_{L^2}^{\frac14}
          +\|\omega^u_h\|_{L^2}^{\frac23}\|Z_3^3 \omega^u_h\|_{L^2}^{\frac13} \right)\\
&\lesssim \|(\omega^u_h, Z_3^3 \omega^u_h)\|_{L^2}^{\frac13}
         \left(\|\partial_h u_h\|_{H^2_{tan}}^{\frac12}
          \|\partial_h \partial_3 u_h\|_{L^2}^{\frac12}
          +\|u_3\|_{H^1_{tan}}^{\frac{s}{2}}
          \|\partial_h u\|_{H^1_{tan}}^{\frac{2-s}{2}}\right)
          \|\omega^u_h\|_{L^2}^{\frac23}
          \|\Lambda_h^{-s} \omega^u_h\|_{L^2},
\end{aligned}
\end{equation}
where we have used the estimate
\begin{equation*}
\begin{aligned}
&~~~~\left\|\|\partial_3 u_3\|_{L^\infty(\mathbb{R}_+)}\right\|_{L^{\frac2s}(\mathbb{R}^2)}
+\left\|\|u_3\|_{L^\infty(\mathbb{R}_+)}\right\|_{L^{\frac2s}(\mathbb{R}^2)}\\
&\lesssim \left(\|\partial_3 u_3\|_{L^2}\|\partial_2 \partial_3 u_3\|_{L^2}
           +\|\partial_1 \partial_3 u_3\|_{L^2}
           \|\partial_{12}\partial_3 u_3\|_{L^2}\right)^{\frac{1-s}{2}}
           \|\partial_3 u_3\|_{L^2}^{\frac{2s-1}{2}}
           \|\partial_3 \partial_3 u_3\|_{L^2}^{\frac12}\\
         &~~~~+(\|u_3\|_{L^2}\|\partial_2 u_3\|_{L^2}
           +\|\partial_1 u_3\|_{L^2}\|\partial_{12}u_3\|_{L^2})^{\frac{1-s}{2}}
           \|u_3\|_{L^2}^{\frac{2s-1}{2}}
           \|\partial_3 u_3\|_{L^2}^{\frac12}\\
&\lesssim \|\partial_h u_h\|_{H^2_{tan}}^{\frac12}
          \|\partial_h \partial_3 u_h\|_{L^2}^{\frac12}
          +\|u_3\|_{H^1_{tan}}^{\frac{s}{2}}
          \|\partial_h u\|_{H^1_{tan}}^{\frac{2-s}{2}}.
\end{aligned}
\end{equation*}
Substituting estimates \eqref{525} and \eqref{524}  into \eqref{523}, we have
\begin{equation}\label{526}
\begin{aligned}
&~~~~\int_{\mathbb{R}_+^3} \Lambda_h^{-s}(u\cdot \nabla \omega^u_h)\cdot \Lambda_h^{-s} \omega^u_h dx\\
&\lesssim
\|(u_h, \partial_1 u_h, \partial_3 u_h)\|_{L^2}^{\frac{1+s}{2}}
          \|(\partial_2 u_h, \partial_{12}u_h, \partial_h \omega^u_h)\|_{L^2}^{\frac{3-s}{2}}
          \|\Lambda_h^{-s} \omega^u_h\|_{L^2}\\
&~~~~+\|(\omega^u_h, Z_3^3 \omega^u_h)\|_{L^2}^{\frac13}
         (\|\partial_h u_h\|_{H^2_{tan}}^{\frac12}
          \|\partial_h \partial_3 u_h\|_{L^2}^{\frac12}
          +\|u_3\|_{H^1_{tan}}^{\frac{s}{2}}
          \|\partial_h u\|_{H^1_{tan}}^{\frac{2-s}{2}})
          \|\omega^u_h\|_{L^2}^{\frac23}
          \|\Lambda_h^{-s} \omega^u_h\|_{L^2}.
\end{aligned}
\end{equation}
Similarly, it holds
\begin{equation}\label{527}
\int_{\mathbb{R}_+^3} \Lambda_h^{-s}(\omega^u\cdot \nabla u_h)\cdot \Lambda_h^{-s} \omega^u_h dx
%&\lesssim \int_0^{+\infty}
%  (\|\omega^u_h \cdot \partial_h u_h\|_{L^{\frac{1}{\frac12+\frac{s}{2}}}(\mathbb{R}^2)}
%   +\|\omega_3^u \partial_3 u_h\|_{L^{\frac{1}{\frac12+\frac{s}{2}}}(\mathbb{R}^2)})
%  \|\Lambda_h^{-s} \omega^u_h\|_{L^2(\mathbb{R}^2)}dx_3\\
%&\lesssim
% (\|\partial_h u_h\|_{L^2}\|\partial_2 \partial_h u_h\|_{L^2}
%  +\|\partial_1 \partial_h u_h\|_{L^2}
%  \|\partial_{12}\partial_h u_h\|_{L^2})^{\frac{1-s}{2}}\\
%&\quad \times\|\partial_h u_h\|_{L^2}^{\frac{2s-1}{2}}
% \|\partial_3 \partial_h u_h\|_{L^2}^{\frac12}
% \|\omega^u_h\|_{L^2} \|\Lambda_h^{-s} \omega^u_h\|_{L^2}\\
%&~~~~+ (\|\omega_3^u\|_{L^2}\|\partial_2 \omega_3^u\|_{L^2}
%  +\|\partial_1 \omega_3^u\|_{L^2}\|\partial_{12}\omega_3^u\|_{L^2})^{\frac{1-s}{2}}\\
%&~~~~\quad \times\|\omega_3^u\|_{L^2}^{\frac{2s-1}{2}}
% \|\partial_3 \omega_3^u\|_{L^2}^{\frac12}
% \|\partial_3 u_h\|_{L^2}
% \|\Lambda_h^{-s} \omega^u_h\|_{L^2}\\
\lesssim  \|(\partial_h u_h,  \partial_h^2 u_h,
               \partial_h^3 u_h,\partial_3 \partial_h u_h)\|_{L^2}
           \|(\omega^u_h, \partial_3 u_h)\|_{L^2}\|\Lambda_h^{-s} \omega^u_h\|_{L^2}.
\end{equation}
Substituting the estimates  \eqref{526} and  \eqref{527} into \eqref{522}, we have
\begin{equation*}
\begin{aligned}
&~~~~\frac12 \frac{d}{dt}\|\Lambda_h^{-s} \omega^u_h\|_{L^2}^2
+\|\partial_h \Lambda_h^{-s} \omega^u_h\|_{L^2}^2
+\varepsilon \|\partial_3 \Lambda_h^{-s} \omega^u_h\|_{L^2}^2\\
&\lesssim
\|(\omega^u_h, Z_3^3 \omega^u_h)\|_{L^2}^{\frac13}
         \left(\|\partial_h u_h\|_{H^2_{tan}}^{\frac12}
          \|\partial_h \partial_3 u_h\|_{L^2}^{\frac12}
          +\|u_3\|_{H^1_{tan}}^{\frac{s}{2}}
          \|\partial_h u\|_{H^1_{tan}}^{\frac{2-s}{2}}\right)
          \|\omega^u_h\|_{L^2}^{\frac23}
          \|\Lambda_h^{-s} \omega^u_h\|_{L^2}\\
&~~~~+\|(u_h, \partial_1 u_h, \partial_3 u_h)\|_{L^2}^{\frac{1+s}{2}}
          \|(\partial_2 u_h, \partial_{12}u_h, \partial_h \omega^u_h)\|_{L^2}^{\frac{3-s}{2}}
          \|\Lambda_h^{-s} \omega^u_h\|_{L^2}\\
&~~~~+\|(\partial_h u_h,  \partial_h^2 u_h,
               \partial_h^3 u_h,\partial_3 \partial_h u_h)\|_{L^2}
           \|(\omega^u_h, \partial_3 u_h)\|_{L^2}\|\Lambda_h^{-s} \omega^u_h\|_{L^2},
\end{aligned}
\end{equation*}
which, integrating over $[0, t]$, yields directly
\begin{equation}\label{528}
\begin{aligned}
&~~~~\frac12\|\Lambda_h^{-s} \omega^u_h(t)\|_{L^2}^2
+\int_0^t \|\partial_h \Lambda_h^{-s} \omega^u_h\|_{L^2}^2 d\tau
+\varepsilon \int_0^t  \|\partial_3 \Lambda_h^{-s} \omega^u_h\|_{L^2}^2 d\tau\\
&\lesssim
\frac12\|\Lambda_h^{-s} (\omega^u_h)|_{t=0}\|_{L^2}^2
+\frac14 \underset{t}{\sup}\|\Lambda_h^{-s} \omega^u_h\|_{L^2}^2\\
&~~~~+
\bigg\{\int_0^t \|(u_h, \partial_1 u_h, \partial_3 u_h)\|_{L^2}^{\frac{1+s}{2}}
          \|(\partial_2 u_h, \partial_{12}u_h, \partial_h \omega^u_h)\|_{L^2}^{\frac{3-s}{2}}d\tau\bigg\}^2\\
&~~~~+\bigg\{\int_0^t \|(\partial_h u_h,  \partial_h^2 u_h,
               \partial_h^3 u_h,\partial_3 \partial_h u_h)\|_{L^2}
           \|(\omega^u_h, \partial_3 u_h)\|_{L^2} d\tau\bigg\}^2\\
&~~~~+\underset{t}{\sup}\|(\omega^u_h, Z_3^3 \omega^u_h)\|_{L^2}^{\frac23}
\bigg\{\int_0^t
         (\|\partial_h u_h\|_{H^2_{tan}}^{\frac12}
          \|\partial_h \partial_3 u_h\|_{L^2}^{\frac12}
          +\|u_3\|_{H^1_{tan}}^{\frac{s}{2}}
          \|\partial_h u\|_{H^1_{tan}}^{\frac{2-s}{2}})
          \|\omega^u_h\|_{L^2}^{\frac23}d\tau \bigg\}^2.
\end{aligned}
\end{equation}
Using the assumption \eqref{decay-assumption}, it is easy to check that
\begin{equation}\label{529}
\begin{aligned}
&~~~~\int_0^t \|(u_h, \partial_1 u_h, \partial_3 u_h)\|_{L^2}^{\frac{1+s}{2}}
          \|(\partial_2 u_h, \partial_{12}u_h, \partial_h \omega^u_h)\|_{L^2}^{\frac{3-s}{2}}d\tau\\
&\lesssim
\bigg\{\int_0^t \|(u_h, \partial_1 u_h, \partial_3 u_h)\|_{L^2}^2
 (1+\tau)^{-\frac{3-s}{1+s}\sigma}d\tau\bigg\}^{\frac{1+s}{4}}
\bigg\{\int_0^t \|(\partial_2 u_h, \partial_{12}u_h, \partial_h \omega^u_h)\|_{L^2}^2
 (1+\tau)^{\sigma}d\tau\bigg\}^{\frac{3-s}{4}}\\
&\lesssim
\delta \bigg\{\int_0^t (1+\tau)^{-s-\frac{3-s}{1+s}\sigma}d\tau\bigg\}^{\frac{1+s}{4}}
\le  \delta C_{s, \sigma},
\end{aligned}
\end{equation}
and
\begin{equation}\label{5212}
\begin{aligned}
&~~~~\int_0^t  \|u_3\|_{H^1_{tan}}^{\frac{s}{2}}
          \|\partial_h u\|_{H^1_{tan}}^{\frac{2-s}{2}}
          \|\omega^u_h\|_{L^2}^{\frac23}d\tau\\
&\lesssim
\bigg\{\int_0^t \|(u_3, \partial_h u_3, \omega^u_h)\|_{L^2}
  ^{\frac{2(3s+4)}{3(s+2)}}(1+\tau)^{-\frac{2-s}{2+s}\sigma}d\tau\bigg\}^{\frac{2+s}{4}}
\bigg\{\int_0^t \|\partial_h u\|_{H^1_{tan}}^2 (1+\tau)^\sigma d\tau\bigg\}
 ^{\frac{2-s}{4}}\\
&\lesssim
\delta^{\frac56}\bigg\{\int_0^t (1+\tau)^{-\frac{3s+4}{3(s+2)}s-\frac{2-s}{2+s}\sigma}
  d\tau\bigg\}^{\frac{2+s}{4}}\\
&\lesssim
\delta^{\frac56}\bigg\{
  \int_0^t (1+\tau)^{\frac{-6s-13}{6(s+2)}}d\tau\bigg\}^{\frac{2+s}{4}}
\le \delta^{\frac56} C_{s, \sigma},
\end{aligned}
\end{equation}
where $s\in (\frac{13}{14}, 1)$ and $\sigma=s-\frac{14s-13}{6(2-s)}$.
Similarly, it is easy to check that
\begin{equation}\label{5210}
\begin{aligned}
\int_0^t \|(\partial_h u_h,  \partial_h^2 u_h,
               \partial_h^3 u_h,\partial_3 \partial_h u_h)\|_{L^2}
           \|(\omega^u_h, \partial_3 u_h)\|_{L^2} d\tau
%&\lesssim
%\bigg\{\int_0^t \|(\partial_h u_h,  \partial_h^2 u_h,
%               \partial_h^3 u_h,\partial_3 \partial_h u_h)\|_{L^2}^2
%  (1+\tau)^\sigma d\tau\bigg\}^{\frac12}
% \bigg\{\int_0^t \|(\omega^u_h, \partial_3 u_h)\|_{L^2}^2 (1+\tau)^{-\sigma}d\tau\bigg\}^{\frac12}\\
%&\lesssim
%\delta \bigg\{\int_0^t  (1+\tau)^{-s-\sigma}d\tau \bigg\}^{\frac12}
\le  \delta C_{s, \sigma},
\end{aligned}
\end{equation}
and
\begin{equation}\label{5211}
\begin{aligned}
\int_0^t \|\partial_h u_h\|_{H^2_{tan}}^{\frac12}
          \|\partial_h \partial_3 u_h\|_{L^2}^{\frac12}
          \|\omega^u_h\|_{L^2}^{\frac23}d\tau
%&\lesssim
%\bigg\{\int_0^t\|\partial_h u_h\|_{H^2_{tan}}
%          \|\partial_h \partial_3 u_h\|_{L^2}(1+\tau)^{\sigma}d\tau\bigg\}^{\frac{1}{2}}
%\bigg\{\int_0^t \|\omega^u_h\|_{L^2}^{\frac43}(1+\tau)^{-\sigma}d\tau\bigg\}^{\frac{1}{2}}\\
%&\lesssim
%\delta^{\frac56}\bigg\{\int_0^t (1+\tau)^{-\frac{2}{3}s-\sigma}d\tau\bigg\}^{\frac{1}{2}}
\le \delta^{\frac56} C_{s, \sigma}.
\end{aligned}
\end{equation}
Substituting estimates \eqref{529}-\eqref{5211}
into \eqref{528}, it holds
\begin{equation*}
\begin{aligned}
\|\Lambda_h^{-s} \omega^u_h\|_{L^2}^2
+\int_0^t \|\partial_h \Lambda_h^{-s} \omega^u_h\|_{L^2}^2 d\tau
+\varepsilon \int_0^t  \|\partial_3 \Lambda_h^{-s} \omega^u_h\|_{L^2}^2 d\tau
\le
2\left(\|\Lambda_h^{-s} (\omega^u_h)|_{t=0}\|_{L^2}^2+E(0)^2\right)+C_{s,\sigma} \delta^2.
\end{aligned}
\end{equation*}
Therefore, we complete the proof of this lemma.
\end{proof}

Now, it is already to establish the decay rate estimate
for the global solution of system \eqref{eq51}.
\begin{lemm}\label{lemma53}
Under the assumptions of \eqref{index} and \eqref{decay-assumption},
the global solution of system \eqref{eq51} will obey
\begin{equation}\label{531}
(1+t)^{s}\left(\|u(t)\|_{H_{tan}^m}^2+\|\omega_h^u(t)\|_{H_{tan}^{m-1}}^2\right)
\le C C_0,
\end{equation}
where the constant $C_0 \overset{def}{=} \|\Lambda^{-s}_h u_0\|_{L^2}^2
+\|\Lambda_h^{-s} (\omega^u_h)|_{t=0}\|_{L^2}^2+E(0)+C_{s,\sigma} \delta^2$.
\end{lemm}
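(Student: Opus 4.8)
The plan is to combine the basic tangential energy inequality with the uniform negative-norm bounds of Lemmas \ref{lemma51} and \ref{lemma52} through a Guo--Wang type interpolation, turning the linear energy estimate into a closed scalar differential inequality whose solution decays exactly like $(1+t)^{-s}$. First I would record the differential energy identity \eqref{energy-2}, namely
\[
\frac{d}{dt}\big(\|u\|_{H_{tan}^m}^2+\|\omega_h^u\|_{H_{tan}^{m-1}}^2\big)
+\big(\|\partial_h u\|_{H_{tan}^m}^2+\|\partial_h\omega_h^u\|_{H_{tan}^{m-1}}^2\big)
+\varepsilon\big(\|\partial_3 u\|_{H_{tan}^m}^2+\|\partial_3\omega_h^u\|_{H_{tan}^{m-1}}^2\big)\le 0,
\]
which is the $B\equiv 0$ specialization of the estimates of Section \ref{global-estimate} and holds under the smallness encoded in \eqref{decay-assumption}. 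Writing $\mathcal{E}(t):=\|u\|_{H_{tan}^m}^2+\|\omega_h^u\|_{H_{tan}^{m-1}}^2$ and $\mathcal{D}(t):=\|\partial_h u\|_{H_{tan}^m}^2+\|\partial_h\omega_h^u\|_{H_{tan}^{m-1}}^2$ and discarding the nonnegative $\varepsilon$-terms, this reads $\frac{d}{dt}\mathcal{E}+\mathcal{D}\le 0$. Lemmas \ref{lemma51} and \ref{lemma52} then supply the time-uniform control $\|\Lambda_h^{-s}u(t)\|_{L^2}^2+\|\Lambda_h^{-s}\omega_h^u(t)\|_{L^2}^2\le C C_0$, using $E(0)^2\lesssim E(0)$ for $E(0)\le 1$.

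Next I would convert $\frac{d}{dt}\mathcal{E}+\mathcal{D}\le 0$ into a nonlinear ODE. The crucial structural observation is the mismatch-by-one-derivative between $\mathcal{E}$ and $\mathcal{D}$: every term of $\mathcal{E}$ carrying at least one horizontal derivative is literally a term of $\mathcal{D}$, so $\mathcal{E}-\big(\|u\|_{L^2}^2+\|\omega_h^u\|_{L^2}^2\big)\le\mathcal{D}$. For the two lowest-order pieces I would apply the conormal interpolation \eqref{a5} in the form $\|f\|_{L^2}^2\lesssim\|\Lambda_h^{-s}f\|_{L^2}^{\frac{2}{1+s}}\|\partial_h f\|_{L^2}^{\frac{2s}{1+s}}$ with $f=u,\omega_h^u$, which together with the uniform bound above yields $\|u\|_{L^2}^2+\|\omega_h^u\|_{L^2}^2\lesssim C_0^{\frac{1}{1+s}}\mathcal{D}^{\frac{s}{1+s}}$. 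Splitting according to whether the low- or high-order part of $\mathcal{E}$ dominates, and using the boundedness $\mathcal{E}\le\delta$ from \eqref{decay-assumption} to absorb the high-order part through $\mathcal{E}_{\mathrm{high}}\le\mathcal{D}$, I expect to arrive at $\mathcal{E}^{1+\frac1s}\lesssim C_0^{\frac1s}\mathcal{D}$, equivalently $\mathcal{D}\gtrsim C_0^{-\frac1s}\mathcal{E}^{1+\frac1s}$.

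Feeding this back into the energy inequality gives $\frac{d}{dt}\mathcal{E}+c\,C_0^{-\frac1s}\mathcal{E}^{1+\frac1s}\le 0$. Integrating this scalar Bernoulli-type inequality yields $\mathcal{E}(t)\le (s/c)^{s}\,C_0\,t^{-s}$ for $t>0$; combining with the uniform-in-time bound $\mathcal{E}(t)\le C C_0$ (valid in particular near $t=0$, since $\mathcal{E}(0)\le E(0)\le C_0$) upgrades $t^{-s}$ to $(1+t)^{-s}$, which is precisely the asserted estimate \eqref{531}.

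The main obstacle is the interpolation step of the second paragraph: the negative norm is controlled only in $L^2$ (not in $H_{tan}^m$) and only in the horizontal variables, so the isotropic Guo--Wang interpolation cannot be quoted verbatim. The delicate point is that applying \eqref{a5} at the $l$-th derivative level produces the exponent $\frac{l+s}{l+1+s}$ on $\mathcal{D}$, which strictly exceeds the binding exponent $\frac{s}{1+s}$ from the lowest level; forcing all levels to decay at the single common rate $(1+t)^{-s}$ therefore relies essentially on the energy--dissipation shift $\mathcal{E}_{\mathrm{high}}\le\mathcal{D}$ together with the smallness $\mathcal{E}\le\delta$ to dominate the higher-order contributions, and one must verify that $\delta\lesssim C_0$ so that the extra constant is absorbed into $C_0^{1/s}$. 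Getting this bookkeeping right, rather than any single estimate, is where the real work lies.
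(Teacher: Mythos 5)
Your proposal is correct and follows the same overall strategy as the paper: the monotone energy inequality \eqref{532}, the uniform negative-norm bounds of Lemmas \ref{lemma51} and \ref{lemma52}, an interpolation turning the dissipation into a superlinear power of the energy, and integration of the resulting Bernoulli-type inequality with $(1+t)^{-s}$ decay. The only real divergence is how the interpolation is organized. The paper neither interpolates level by level nor needs your low/high splitting with its two-case analysis: it interpolates the whole norms at once, $\|u\|_{H_{tan}^m}^2 \lesssim \|\Lambda_h^{-s}u\|_{H_{tan}^m}^{2/(1+s)}\|\partial_h u\|_{H_{tan}^m}^{2s/(1+s)}$ as in \eqref{533}--\eqref{534}, which holds at every derivative level $0\le k\le m$ with the \emph{same} exponents because the negative-order factor at level $k$ is $\Lambda_h^{-s}\partial_h^k u$ (order $k-s$), not $\Lambda_h^{-s}u$; the elementary frequency splitting $\|\Lambda_h^{-s}\partial_h^k u\|_{L^2}\lesssim \|\Lambda_h^{-s}u\|_{L^2}+\|u\|_{H_{tan}^m}$ then reduces everything to the quantities bounded uniformly in \eqref{535}. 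This dissolves the exponent-mismatch worry in your final paragraph. Your alternative route (absorbing the higher-order part of $\mathcal{E}$ into $\mathcal{D}$ via the derivative shift, then splitting into cases) is also valid, but in the high-order-dominant case you should obtain $\mathcal{E}\lesssim C_0$ not from the ansatz $\mathcal{E}\le\delta$ plus the verification $\delta\lesssim C_0$ (which is not available from the hypotheses of the lemma alone, only from the final bootstrap choice of $\delta$), but simply from the monotonicity of $\mathcal{E}$ under \eqref{532} together with $\mathcal{E}(0)\le E(0)\le C_0$ --- precisely the bound the paper records in \eqref{535}. With that one substitution your bookkeeping closes cleanly and yields \eqref{531}.
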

\begin{proof}
Due to the system \eqref{eq51}, it is easy to establish the following
differential inequality:
\begin{equation}\label{532}
\frac{d}{dt}\left(\|u\|_{H_{tan}^m}^2+\|\omega_h^u\|_{H_{tan}^{m-1}}^2\right)
+\left(\|\partial_h u\|_{H_{tan}^m}^2+\|\partial_h  \omega_h^u\|_{H_{tan}^{m-1}}^2\right)
+\varepsilon\left(\|\partial_3 u\|_{H_{tan}^m}^2+\|\partial_3 \omega_h^u\|_{H_{tan}^{m-1}}^2\right)
\le 0,
\end{equation}
which can be proved in \eqref{eq999} in Appendix \ref{claim-estimates}.
Using the interpolation inequality, we have for $s \in (0, 1)$,
\begin{equation}\label{533}
\|u\|_{H_{tan}^m}^2
\lesssim \|\Lambda_h^{-s }u\|_{H_{tan}^m}^{\frac{2}{1+s}}
         \|\partial_h u\|_{H_{tan}^m}^{\frac{2s}{1+s}}
\lesssim \left(\|\Lambda_h^{-s }u\|_{L^2}+\|u\|_{H_{tan}^m}\right)^{\frac{2}{1+s}}
         \|\partial_h u\|_{H_{tan}^m}^{\frac{2s}{1+s}},
\end{equation}
and
\begin{equation}\label{534}
\|\omega_h^u\|_{H_{tan}^{m-1}}^2
\lesssim \|\Lambda_h^{-s }\omega_h^u\|_{H_{tan}^{m-1}}^{\frac{2}{1+s}}
         \|\partial_h \omega_h^u\|_{H_{tan}^{m-1}}^{\frac{2s}{1+s}}
\lesssim \left(\|\Lambda_h^{-s }\omega_h^u\|_{L^2}+\|\omega_h^u\|_{H_{tan}^{m-1}}\right)^{\frac{2}{1+s}}
         \|\partial_h \omega_h^u\|_{H_{tan}^{m-1}}^{\frac{2s}{1+s}}.
\end{equation}
The combination of estimates \eqref{511}, \eqref{521} and Theorem \ref{th1} yields directly
\begin{equation}\label{535}
\begin{aligned}
&~~~~\|\Lambda_h^{-s }u\|_{L^2}^2+\|\Lambda_h^{-s }\omega_h^u\|_{L^2}^2
+\|u\|_{H_{tan}^m}^2+\|\omega_h^u\|_{H_{tan}^{m-1}}^2\\
&\lesssim \|\Lambda^{-s}_h u_0\|_{L^2}^2
+\|\Lambda_h^{-s} (\omega^u_h)|_{t=0}\|_{L^2}^2+E(0)+C_s \delta^2
\overset{def}{=}C_0,
\end{aligned}
\end{equation}
Substituting estimate \eqref{535} into \eqref{533} and \eqref{534}, then we have
\begin{equation*}
\|u\|_{H_{tan}^m}^2+\|\omega_h^u\|_{H_{tan}^{m-1}}^2
\lesssim C_0^{\frac{1}{1+s}}
        \left(\|\partial_h u\|_{H_{tan}^m}^{2}
         +\|\partial_h \omega_h^u\|_{H_{tan}^{m-1}}^2\right)^{\frac{s}{1+s}},
\end{equation*}
which, together with the differential inequality \eqref{532}, yields directly
\begin{equation*}
\frac{d}{dt}\left(\|u\|_{H_{tan}^m}^2+\|\omega_h^u\|_{H_{tan}^{m-1}}^2\right)
+C_0^{-\frac{1}{s}}\left(\|u\|_{H_{tan}^m}^2+\|\omega_h^u\|_{H_{tan}^{m-1}}^2\right)^{1+\frac{1}{s}}\le 0.
\end{equation*}
Then, we can obtain the decay estimate
\begin{equation*}
\|u(t)\|_{H_{tan}^m}^2+\|\omega_h^u(t)\|_{H_{tan}^{m-1}}^2
\lesssim C_0(1+t)^{-s},
\end{equation*}
or equivalently
\begin{equation*}
(1+t)^{s}\left(\|u(t)\|_{H_{tan}^m}^2+\|\omega_h^u(t)\|_{H_{tan}^{m-1}}^2\right)
\le C C_0.
\end{equation*}
Therefore, we complete the proof of this lemma.
\end{proof}

Finally, we will establish the time integral of viscous part
of velocity with suitable weight. This will help us control
the term in estimate \eqref{grow}.
\begin{lemm}\label{lemma54}
Under the assumptions of \eqref{index} and \eqref{decay-assumption},
the global solution of system \eqref{eq51} will obey
\begin{equation}\label{541}
\begin{aligned}
&(1+t)^{\sigma}\left(\|u\|_{H_{tan}^m}^2+\|\omega_h^u\|_{H_{tan}^{m-1}}^2\right)
 +\int_0^t (1+\tau)^{\sigma}
   \left(\|\partial_h u\|_{H_{tan}^m}^2+\|\partial_h  \omega_h^u\|_{H_{tan}^{m-1}}^2\right)d\tau\\
&+\varepsilon \int_0^t(1+\tau)^{\sigma}
   \left(\|\partial_3 u\|_{H_{tan}^m}^2+\|\partial_3 \omega_h^u\|_{H_{tan}^{m-1}}^2\right)d\tau
 \le C_{s, \sigma} C_0,
\end{aligned}
\end{equation}
where the constant $C_0$ is defined in Lemma \ref{lemma53}.
\end{lemm}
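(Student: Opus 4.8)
The plan is to run a time-weighted energy argument on the differential inequality \eqref{532}, using the decay rate \eqref{531} already established in Lemma \ref{lemma53} to absorb the weight. Write $\mathcal{N}(t):=\|u\|_{H_{tan}^m}^2+\|\omega_h^u\|_{H_{tan}^{m-1}}^2$ for the energy and
\begin{equation*}
\mathcal{D}(t):=\|\partial_h u\|_{H_{tan}^m}^2+\|\partial_h\omega_h^u\|_{H_{tan}^{m-1}}^2+\varepsilon\big(\|\partial_3 u\|_{H_{tan}^m}^2+\|\partial_3\omega_h^u\|_{H_{tan}^{m-1}}^2\big)
\end{equation*}
for the full (horizontal plus $\varepsilon$-vertical) dissipation, so that \eqref{532} reads $\frac{d}{dt}\mathcal{N}+\mathcal{D}\le 0$. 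The three pieces of \eqref{541} are exactly $(1+t)^\sigma\mathcal{N}(t)$ and the time integral of $(1+t)^\sigma\mathcal{D}(t)$.

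First I would multiply $\frac{d}{dt}\mathcal{N}+\mathcal{D}\le 0$ by the weight $(1+t)^\sigma$ and rewrite the first term via the product rule, giving
\begin{equation*}
\frac{d}{dt}\big[(1+t)^\sigma\mathcal{N}(t)\big]+(1+t)^\sigma\mathcal{D}(t)\le \sigma(1+t)^{\sigma-1}\mathcal{N}(t).
\end{equation*}
Integrating over $[0,t]$ and bounding the boundary term by $\mathcal{N}(0)\le E(0)\le C_0$ yields
\begin{equation*}
(1+t)^\sigma\mathcal{N}(t)+\int_0^t(1+\tau)^\sigma\mathcal{D}(\tau)\,d\tau\lesssim C_0+\sigma\int_0^t(1+\tau)^{\sigma-1}\mathcal{N}(\tau)\,d\tau.
\end{equation*}

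The decisive step is to control the correction integral on the right. Inserting the decay estimate \eqref{531}, namely $\mathcal{N}(\tau)\le CC_0(1+\tau)^{-s}$, reduces it to $CC_0\int_0^t(1+\tau)^{\sigma-1-s}\,d\tau$. This integral is bounded uniformly in $t$ precisely when $\sigma-1-s<-1$, i.e. $\sigma<s$, in which case $\int_0^\infty(1+\tau)^{\sigma-1-s}\,d\tau=\tfrac{1}{s-\sigma}$. The required strict inequality $\sigma<s$ follows directly from the definition \eqref{index}, since $\frac{14s-13}{6(2-s)}>0$ for every $s\in(\frac{13}{14},1)$. Collecting the bounds gives $(1+t)^\sigma\mathcal{N}(t)+\int_0^t(1+\tau)^\sigma\mathcal{D}(\tau)\,d\tau\le C_{s,\sigma}C_0$, which is exactly \eqref{541}.

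I do not expect a genuine obstacle once Lemma \ref{lemma53} is in hand: the argument is a routine weighted Gr\"onwall-type manipulation, and the only point requiring care is the exponent bookkeeping, that is, confirming $\sigma<s$ so that the correction term integrates to a time-independent constant. The subtlety hidden behind this is that $\sigma$ cannot be taken too small either: the specific value $\sigma=s-\frac{14s-13}{6(2-s)}$ is calibrated so that the resulting weighted dissipation bound \eqref{541} is strong enough to feed the degenerate term $\varepsilon\int_0^t(1+\tau)^\sigma\|\partial_3 u\|_{H_{tan}^m}^2\,d\tau\lesssim 1$ used in the convergence-rate analysis (cf. Step 2 of Section \ref{difficulty}), while the range $s\in(\frac{13}{14},1)$ simultaneously guarantees $\sigma<s$ here and the integrability exploited in \eqref{5212}--\eqref{5211}.
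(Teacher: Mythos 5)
Your proposal is correct and follows essentially the same route as the paper: multiply \eqref{532} by the weight $(1+t)^{\sigma}$, integrate in time, and absorb the correction term $\sigma\int_0^t(1+\tau)^{\sigma-1}\mathcal{N}(\tau)\,d\tau$ by inserting the decay bound \eqref{531}, which reduces it to $C_0\int_0^t(1+\tau)^{-(1+s-\sigma)}\,d\tau\le C_{s,\sigma}C_0$ since $\sigma<s$. The exponent bookkeeping you flag ($\sigma<s$ from \eqref{index}) is exactly the point the paper relies on as well.
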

\begin{proof}
Multiplying \eqref{532} by $(1+t)^{\sigma}$, then we have
\begin{equation*}
\begin{aligned}
&\frac{d}{dt}[(1+t)^{\sigma}(\|u\|_{H_{tan}^m}^2+\|\omega_h^u\|_{H_{tan}^{m-1}}^2)]
 +(1+t)^{\sigma}
   (\|\partial_h u\|_{H_{tan}^m}^2+\|\partial_h  \omega_h^u\|_{H_{tan}^{m-1}}^2)\\
&+\varepsilon(1+t)^{\sigma}
   (\|\partial_3 u\|_{H_{tan}^m}^2+\|\partial_3 \omega_h^u\|_{H_{tan}^{m-1}}^2)
\le \sigma(1+t)^{\sigma-1}(\|u\|_{H_{tan}^m}^2+\|\omega_h^u\|_{H_{tan}^{m-1}}^2),
\end{aligned}
\end{equation*}
which, integrating over $[0, t]$, yields directly
\begin{equation*}
\begin{aligned}
&(1+t)^{\sigma}(\|u\|_{H_{tan}^m}^2+\|\omega_h^u\|_{H_{tan}^{m-1}}^2)
 +\int_0^t (1+\tau)^{\sigma}
   (\|\partial_h u\|_{H_{tan}^m}^2+\|\partial_h  \omega_h^u\|_{H_{tan}^{m-1}}^2)d\tau\\
&+\varepsilon \int_0^t(1+\tau)^{\sigma}
   (\|\partial_3 u\|_{H_{tan}^m}^2+\|\partial_3 \omega_h^u\|_{H_{tan}^{m-1}}^2)d\tau\\
&\lesssim  C_0+ C_0 \int_0^t(1+\tau)^{-(1+s-\sigma)}d\tau \le C_{s, \sigma} C_0.
\end{aligned}
\end{equation*}
Therefore, we complete the proof of this lemma.
\end{proof}

\begin{proof}[\textbf{\underline{Closed energy estimate}}]
Under the assumptions of \eqref{index} and \eqref{decay-assumption},
the estimates \eqref{531} and \eqref{541} yield
\begin{equation}\label{551}
(1+t)^{s}(\|u(t)\|_{H_{tan}^m}^2+\|\omega_h^u(t)\|_{H_{tan}^{m-1}}^2)\le C C_0,
\end{equation}
and
\begin{equation}\label{552}
\varepsilon \int_0^t(1+\tau)^{\sigma}
   (\|\partial_3 u\|_{H_{tan}^m}^2+\|\partial_3 \omega_h^u\|_{H_{tan}^{m-1}}^2)d\tau
+\int_0^t (1+\tau)^{\sigma}
   (\|\partial_h u\|_{H_{tan}^m}^2+\|\partial_h  \omega_h^u\|_{H_{tan}^{m-1}}^2)d\tau
\le C_{s, \sigma} C_0,
\end{equation}
where $C_0=\|\Lambda^{-s}_h u_0\|_{L^2}^2
+\|\Lambda_h^{-s} (\omega^u_h)|_{t=0}\|_{L^2}^2+E(0)+C_{s,\sigma} \delta^2$
($C_0$ is defined in Lemma \ref{lemma53}).
Then, the combination of estimates \eqref{551} and \eqref{552} yields directly
\begin{equation}\label{553}
\begin{aligned}
&~~~~(1+t)^{s}(\|u(t)\|_{H_{tan}^m}^2+\|\omega_h^u(t)\|_{H_{tan}^{m-1}}^2)
+\int_0^t (1+\tau)^{\sigma}
 (\|\partial_h u\|_{H_{tan}^m}^2+\|\partial_h  \omega_h^u\|_{H_{tan}^{m-1}}^2)d\tau\\
&~~~~+\varepsilon \int_0^t(1+\tau)^{\sigma}
   (\|\partial_3 u\|_{H_{tan}^m}^2+\|\partial_3 \omega_h^u\|_{H_{tan}^{m-1}}^2)d\tau\\
&\le C_{s, \sigma}(\|\Lambda^{-s}_h u_0\|_{L^2}^2
+\|\Lambda_h^{-s} (\omega^u_h)|_{t=0}\|_{L^2}^2+E(0))
+C_{s,\sigma} \delta^2.
\end{aligned}
\end{equation}
Now we choose the small constant $\delta\overset{def}{=}
4C_{s,\sigma}(\|\Lambda^{-s}_h u_0\|_{L^2}^2
+\|\Lambda_h^{-s} (\omega^u_h)|_{t=0}\|_{L^2}^2+E(0))
\le \min\{1, \frac{1}{4 C_{s,\sigma}}\}$, then we have
\begin{equation}\label{554}
\begin{aligned}
&~~~~(1+t)^{s}(\|u(t)\|_{H_{tan}^m}^2+\|\omega_h^u(t)\|_{H_{tan}^{m-1}}^2)
+\int_0^t (1+\tau)^{\sigma}
 (\|\partial_h u\|_{H_{tan}^m}^2+\|\partial_h  \omega_h^u\|_{H_{tan}^{m-1}}^2)d\tau\\
&~~~~+\varepsilon \int_0^t(1+\tau)^{\sigma}
   (\|\partial_3 u\|_{H_{tan}^m}^2+\|\partial_3 \omega_h^u\|_{H_{tan}^{m-1}}^2)d\tau\\
&\le C_{s,\sigma}(\|\Lambda^{-s}_h u_0\|_{L^2}^2
+\|\Lambda_h^{-s} (\omega^u_h)|_{t=0}\|_{L^2}^2+E(0))+\frac{\delta}{4}\\
&\le 2 C_{s,\sigma}(\|\Lambda^{-s}_h u_0\|_{L^2}^2
+\|\Lambda_h^{-s} (\omega^u_h)|_{t=0}\|_{L^2}^2+E(0))=\frac{\delta}{2}.
\end{aligned}
\end{equation}
The combination of estimates \eqref{decay-assumption} and
\eqref{554} will help us to establish the closed estimate.
Thus, we establish the time decay rate estimate \eqref{decay-1}
for the system \eqref{eq1-3}.
Similarly, one can establish similar times decay estimate \eqref{decay-2}
for the system \eqref{eq1-4}.
Therefore, we complete the proof of estimates \eqref{decay-1}
and \eqref{decay-2} in Theorem \ref{th2}.
\end{proof}

\subsection{Convergence rate of global solution}
In this subsection, we will establish the asymptotic behavior
for the quantity $u^\ep-u^0$ with respect to $\ep$ and time $t$.
Since the estimate \eqref{554} is independent of $\ep$, one can
take the same method to establish decay rate for the
global solution of system \eqref{eq52} as follows:
\begin{equation}\label{555}
\begin{aligned}
&~~~~(1+t)^{s}(\|u^0 (t)\|_{H_{tan}^m}^2+\|\omega_h^{u^0}(t)\|_{H_{tan}^{m-1}}^2)
+\int_0^t (1+\tau)^{\sigma}
 (\|\partial_h u^0\|_{H_{tan}^m}^2+\|\partial_h  \omega_h^{u^0}\|_{H_{tan}^{m-1}}^2)d\tau\\
&\le 2 C_{s,\sigma}(\|\Lambda^{-s}_h u_0\|_{L^2}^2
+\|\Lambda_h^{-s} (\omega^u_h)|_{t=0}\|_{L^2}^2+E(0)),
\end{aligned}
\end{equation}
where $\omega_h^{u^0}$ is defined by
$\omega_h^{u^0} \overset{def}{=}(\partial_2 u^0_3-\partial_3 u^0_2,
\partial_3 u^0_1-\partial_1 u^0_3)$.
Let us define $(\bar u, \bar p)\overset{def}{=}
(u^\ep-u^0, p^\ep-p^0)$, then $(\bar u, \bar p)$ will satisfy the system
\begin{equation}\label{eq53}
\left\{\begin{array}{*{4}{ll}}
\partial_t \bar u- \Delta_h \bar u+ \nabla \bar p
= \varepsilon \partial_3^2 u^\var
  -u^\ep \cdot \nabla \bar u- \bar u \cdot \nabla u^0
& {\rm in} ~~\mathbb{R}_+^3,\\
\nabla \cdot \bar u= 0 & {\rm in} ~~\mathbb{R}_+^3,\\
	 		\bar u_3= 0
\quad & {\rm on} ~~\mathbb{R}^2 \times \{x_3=0\},\\
\bar u|_{t=0}=0 & {\rm in} ~~\mathbb{R}_+^3.
	 	\end{array}\right.
	 \end{equation}
Now, we are already to establish the $L^2$-estimate for the quantity $\bar u$.
\begin{lemm}
Under the conditions of Theorem \ref{th2}, then it holds
\begin{equation}
\|\bar u\|_{L^2}
\le C_{s,\sigma}(\|\Lambda^{-s}_h u_0\|_{L^2}^2
+\|\Lambda_h^{-s} (\omega^u_h)|_{t=0}\|_{L^2}^2+E(0))
 \varepsilon^{\frac12},
\end{equation}
where the constant $C_{s,\sigma}$ is independent of $\ep$ and time $t$.
\end{lemm}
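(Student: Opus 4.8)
The plan is to run a weighted Gronwall argument on $\|\bar u\|_{L^2}$, where $\bar u=u^\ep-u^0$, turning the $\varepsilon$-forcing together with the time-decay of both solutions into the claimed power of $\varepsilon$. First I would test the momentum equation $\eqref{eq53}_1$ against $\bar u$ and integrate over $\mathbb{R}_+^3$. Using $\nabla\cdot\bar u=0$, the boundary conditions $\bar u_3|_{x_3=0}=0$ and $\partial_3u_h^\ep|_{x_3=0}=0$, the pressure term and the transport term $\int u^\ep\cdot\nabla\bar u\cdot\bar u\,dx$ vanish, and integrating $\varepsilon\int\partial_3^2u^\ep\cdot\bar u\,dx$ by parts in $x_3$ produces no boundary contribution. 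This gives the basic identity
$$\frac12\frac{d}{dt}\|\bar u\|_{L^2}^2+\|\partial_h\bar u\|_{L^2}^2=-\varepsilon\int_{\mathbb{R}_+^3}\partial_3u^\ep\cdot\partial_3\bar u\,dx-\int_{\mathbb{R}_+^3}(\bar u\cdot\nabla u^0)\cdot\bar u\,dx.$$

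Next I would control the quadratic term. Splitting $\bar u\cdot\nabla u^0=\bar u_h\cdot\partial_h u^0+\bar u_3\,\partial_3u^0$ and applying the anisotropic Sobolev inequality of Lemma \ref{lemma1}, I would arrange the three factors so that every vertical derivative lands on $u^0$ and never on $\bar u$: for the horizontal piece this assigns $\partial_3$ to $u^0$, producing the combination $\|\partial_h u^0\|_{L^2}^{1/2}\|\partial_3\partial_h u^0\|_{L^2}^{1/2}\,\|\bar u\|_{L^2}\,\|\partial_h\bar u\|_{L^2}$; for the vertical piece I would first use $\partial_3\bar u_3=-\partial_h\cdot\bar u_h$ from incompressibility, so that the only surviving vertical derivative of $u^0$ is again $\partial_3\partial_h u^0$, while $\bar u$ carries only horizontal derivatives. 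Young's inequality then absorbs $\tfrac12\|\partial_h\bar u\|_{L^2}^2$ on the left and leaves a Gronwall coefficient proportional to $\|\nabla u^0\|_{L^2}\|\partial_3\partial_h u^0\|_{L^2}\|\bar u\|_{L^2}^2$. Bounding the forcing by $\varepsilon\|\partial_3u^\ep\|_{L^2}\|\partial_3\bar u\|_{L^2}$ and integrating the resulting differential inequality with $\bar u(0)=0$ produces exactly the representation \eqref{grow-1}.

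I would then verify that the exponent $\int_0^t\|\partial_3\partial_h u^0\|_{L^2}\|\nabla u^0\|_{L^2}\,d\tau$ is bounded uniformly in $t$. Writing $\partial_3\partial_h u^0$ through $\partial_h\omega_h^{u^0}$ and $\partial_h\partial_h u^0$, and $\|\nabla u^0\|_{L^2}\lesssim\|\partial_h u^0\|_{L^2}+\|\omega_h^{u^0}\|_{L^2}$ via $\nabla\cdot u^0=0$, a Cauchy--Schwarz split with the weight $(1+\tau)^{\sigma}$ from \eqref{index} reduces this to the weighted dissipation integral controlled by the decay estimate \eqref{555} (the analogue of \eqref{541} for $u^0$) together with the decay \eqref{decay-2}; since $s+\sigma>1$ for $s\in(\tfrac{13}{14},1)$, both factors are finite, so the exponential prefactor in \eqref{grow-1} is bounded by a constant independent of $t$ and $\varepsilon$.

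The last and main obstacle is the forcing integral $\varepsilon\int_0^t\|\partial_3u^\ep\|_{L^2}\|\partial_3\bar u\|_{L^2}\,d\tau$, where care is essential: the uniform bound $\varepsilon\int_0^t\|\partial_3u^\ep\|_{L^2}^2\,d\tau\lesssim1$ coming from \eqref{ns-estimate} is only $O(1)$, not small, so by itself it cannot yield convergence. Instead I would insert the weight $(1+\tau)^{\sigma}$ and apply Cauchy--Schwarz,
$$\varepsilon\int_0^t\|\partial_3u^\ep\|_{L^2}\|\partial_3\bar u\|_{L^2}\,d\tau\le\varepsilon\Big(\int_0^t(1+\tau)^{\sigma}\|\partial_3u^\ep\|_{L^2}^2\,d\tau\Big)^{1/2}\Big(\int_0^t(1+\tau)^{-\sigma}\|\partial_3\bar u\|_{L^2}^2\,d\tau\Big)^{1/2},$$
so that the weighted dissipation bound of Lemma \ref{lemma54} gives $\varepsilon\int_0^t(1+\tau)^{\sigma}\|\partial_3u^\ep\|_{L^2}^2\,d\tau\lesssim1$, i.e. the first factor is $\lesssim\varepsilon^{-1/2}$, while $\|\partial_3\bar u\|_{L^2}\le\|\partial_3u^\ep\|_{L^2}+\|\partial_3u^0\|_{L^2}\lesssim(1+\tau)^{-s/2}$ from the decay estimates makes $\int_0^t(1+\tau)^{-\sigma-s}\,d\tau$ convergent, so the second factor is $\lesssim1$. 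The smallness therefore arises from pairing the $\varepsilon$-weighted vertical dissipation of $u^\ep$ against the time-decaying vertical derivative of $\bar u$, and multiplying through yields $\varepsilon\cdot\varepsilon^{-1/2}=\varepsilon^{1/2}$. Feeding this into \eqref{grow-1} gives $\|\bar u\|_{L^2}^2\lesssim\varepsilon^{1/2}$, which is the asserted convergence rate \eqref{difference-estimate}, with the constant depending only on $s,\sigma$ and the initial quantities $\|\Lambda_h^{-s}u_0\|_{L^2}^2+\|\Lambda_h^{-s}(\omega_h^u)|_{t=0}\|_{L^2}^2+E(0)$ and independent of $t$ and $\varepsilon$.
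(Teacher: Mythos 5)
Your proposal is correct and follows essentially the same route as the paper: the same $L^2$ energy identity with vanishing pressure/transport terms, the same anisotropic Sobolev treatment of $\int(\bar u\cdot\nabla u^0)\cdot\bar u\,dx$ (including the use of $\partial_3\bar u_3=-\partial_h\cdot\bar u_h$), Gr\"onwall with the exponent bounded by a $(1+\tau)^{\sigma}$-weighted Cauchy--Schwarz against the decay of $u^0$, and the identical pairing of the $\varepsilon$-weighted vertical dissipation of $u^\varepsilon$ (Lemma \ref{lemma54}) with the $(1+\tau)^{-s}$ decay of $\partial_3\bar u$ to extract $\varepsilon^{1/2}$.
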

\begin{proof}
Multiplying Eq.$\eqref{eq53}_1$ by $\bar u$ and integrating by part,
it holds
\begin{equation*}
\begin{aligned}
\frac{1}{2}\frac{d}{dt}\|\bar u\|_{L^2}^2+\|\partial_h \bar u\|_{L^2}^2
=-\int_{\mathbb{R}_+^3} (\bar u \cdot \nabla) u^0 \cdot \bar u dx
-\varepsilon \int_{\mathbb{R}_+^3} \partial_3 u^\varepsilon \cdot \partial_3 \bar u dx.
\end{aligned}
\end{equation*}
Using the H${\rm \ddot{o}}$lder's and Cauchy's inequalities, we have
\begin{equation*}
\begin{aligned}
\bigg|\int_{\mathbb{R}_+^3} (\bar u \cdot \nabla) u^0 \cdot \bar u dx \bigg|
&\lesssim \|\bar u_h\|_{L^2}^{\frac12}\|\partial_1 {\bar u_h}\|_{L^2}^{\frac12}
          \|\partial_h u^0\|_{L^2}^{\frac12}\|\partial_3 \partial_h u^0\|_{L^2}^{\frac12}
          \|\bar u\|_{L^2}^{\frac12}\|\partial_2 {\bar u}\|_{L^2}^{\frac12}\\
         &~~~~+\|\bar u_3\|_{L^2}^{\frac12}\|\partial_3 {\bar u_3}\|_{L^2}^{\frac12}
          \|\partial_3 u^0\|_{L^2}^{\frac12}\|\partial_1 \partial_3 u^0\|_{L^2}^{\frac12}
          \|\bar u\|_{L^2}^{\frac12}\|\partial_2 {\bar u}\|_{L^2}^{\frac12}\\
&\lesssim \frac12 \|\partial_h {\bar u_h}\|_{L^2}^2
           +(\|\partial_3 \partial_h u^0\|_{L^2}\|\partial_h u^0\|_{L^2}
           +\|\partial_3 u^0\|_{L^2}\|\partial_1 \partial_3 u^0\|_{L^2})\|\bar u\|_{L^2}^2.
\end{aligned}
\end{equation*}
Therefore, it holds
\begin{equation*}
\frac{d}{dt}\|\bar u\|_{L^2}^2+\|\partial_h \bar u\|_{L^2}^2
\lesssim \|\partial_3 \partial_h u^0\|_{L^2}\|\nabla u^0\|_{L^2}\|\bar u\|_{L^2}^2
+\varepsilon \|\partial_3 u^\varepsilon\|_{L^2} \|\partial_3 \bar u\|_{L^2},
\end{equation*}
which, together with Gr\"{o}nwall's inequality, yields directly
\begin{equation}\label{grow}
\begin{aligned}
\|\bar u\|_{L^2}^2
&\lesssim \varepsilon \exp \bigg\{\int_0^t \|\partial_3 \partial_h u^0\|_{L^2}
                     \|\nabla u^0\|_{L^2} d \tau \bigg\}\\
         &~~~~\times \int_0^t \|\partial_3 u^\varepsilon\|_{L^2} \|\partial_3 \bar u\|_{L^2}
           \exp\bigg\{-\int_0^{\tau} \|\partial_3 \partial_h u^0\|_{L^2}
                     \|\nabla u^0\|_{L^2} d\varsigma\bigg\}d\tau\\
&\lesssim C_* C_{s,\sigma} \varepsilon \int_0^t \|\partial_3 u^\varepsilon\|_{L^2}
                  \|\partial_3 \bar u\|_{L^2}d\tau,
\end{aligned}
\end{equation}
where we have used the following estimate in the last inequality
\begin{equation*}
\begin{aligned}
\int_0^t \|\partial_3 \partial_h u^0\|_{L^2} \|\nabla u^0\|_{L^2} d \tau
&\le \bigg\{\int_0^t (1+\tau)^{\sigma}\|\partial_3 \partial_h u^0\|_{L^2}^2 d\tau\bigg\}^{\frac12}
\bigg\{\int_0^t  (1+\tau)^{-\sigma}\|\nabla u^0\|_{L^2}^2 d\tau\bigg\}^{\frac12}\\
&\lesssim C_*\bigg\{\int_0^t  (1+\tau)^{-\sigma-s}d\tau\bigg\}^{\frac12}
\lesssim C_*C_{s,\sigma}.
\end{aligned}
\end{equation*}
Here one uses the decay estimate \eqref{555} and
the constant $C_* \overset{def}{=}\|\Lambda^{-s}_h u_0\|_{L^2}^2
+\|\Lambda_h^{-s} (\omega^u_h)|_{t=0}\|_{L^2}^2+E(0)$.
Thus, we have
\begin{equation*}
\begin{aligned}
\|\bar u\|_{L^2}^2 &
\le  C_{s,\sigma} C_* \varepsilon
          \int_0^t \|\partial_3 u^\varepsilon\|_{L^2}
                  \|\partial_3 \bar u\|_{L^2}d\tau\\
&\le C_{s,\sigma} C_* \varepsilon^{\frac12}
 \left\{\varepsilon\int_0^t (1+\tau)^{\sigma}\|\partial_3 u^\varepsilon\|_{L^2}^2d\tau
 \right\}^{\frac12}
 \left\{\int_0^t (1+\tau)^{-\sigma}\|\partial_3 \bar u\|_{L^2}^2 d\tau
 \right\}^{\frac12}\\
& \le  C_{s,\sigma} C_* \varepsilon^{\frac12}
  \left\{\int_0^t (1+\tau)^{-(\sigma+s)}d\tau\right\}^{\frac12}
\le  C_{s,\sigma} C_* \varepsilon^{\frac12}
          [(1+t)^{1-(\sigma+s)}-1]^{\frac12},
\end{aligned}
\end{equation*}
which yields directly
\begin{equation*}
\|\bar u\|_{L^2}^2
\le C_{s,\sigma} C_* \varepsilon^{\frac12}.
\end{equation*}
Therefore, we complete the proof of this lemma.
\end{proof}

	\begin{sloppypar}
		\noindent\textbf{Data availability statement~} Data sharing is not applicable, as no data was collected or processed for the research reported in this article.
		\\
		
		\noindent\textbf{Conflicts of interest~} This work does not have any conflicts of interest.
		\\
		
				\noindent\textbf{Acknowledgements~}
Jincheng Gao was partially supported by National Key Research and Development Program of China (2021YFA1002100), Guangdong Basic and Applied Basic Research Foundation (2022A1515011798, 2021B1515310003) and
Guangzhou Science and Technology Progamme (2024A04J6410).
Jiahong Wu was partially supported by the National Science Foundation of the United
States (DMS 2104682, DMS 2309748).
Zheng-an Yao was partially supported by
National Natural Science Foundation of China (12126609) and
National Key Research and Development Program of China (2021YFA1002100).
	\end{sloppypar}

\begin{appendices}
\section{Some useful Sobolev inequalities}	\label{usefull-inequality}

Firstly, we will introduce the anisotropic Sobolev inequalities used frequently in Sections \ref{global-estimate}
and \ref{asymptotic-behavior}.

\begin{lemm}\label{lemma1}
For different numbers $i,j,k \in \{ 1,2,3\}$, the following inequalities hold when the right-hand side are all bounded:
\begin{equation}\label{a1}
\int_{\mathbb{R}_+^3}|fgh| dx
\lesssim
\Vert f \Vert_{L^2}^{\frac{1}{2}}
\Vert \partial_1 f \Vert_{L^2}^{\frac{1}{2}}
\Vert g \Vert_{L^2}^{\frac{1}{2}}
\Vert \partial_2 g \Vert_{L^2}^{\frac{1}{2}}
\Vert h \Vert_{L^2}^{\frac{1}{2}}
\Vert \partial_3 h \Vert_{L^2}^{\frac{1}{2}},
\end{equation}
\begin{equation}\label{a2}
			\int_{\mathbb{R}_+^3}
			|fgh| dx
			\lesssim
			\Vert f \Vert_{L^2}^{\frac{1}{4}}
			\Vert \partial_i f \Vert_{L^2}^{\frac{1}{4}}
			\Vert \partial_j f \Vert_{L^2}^{\frac{1}{4}}
			\Vert \partial_i \partial_j f \Vert_{L^2}^{\frac{1}{4}}
			\Vert g \Vert_{L^2}^{\frac{1}{2}}
			\Vert \partial_k g \Vert_{L^2}^{\frac{1}{2}}
			\Vert h \Vert_{L^2},
\end{equation}
\begin{equation}\label{a3}
			\Vert fg \Vert_{L^2}
			\lesssim
			\Vert f \Vert_{L^2}^{\frac{1}{4}}
			\Vert \partial_i f \Vert_{L^2}^{\frac{1}{4}}
			\Vert \partial_j f \Vert_{L^2}^{\frac{1}{4}}
			\Vert \partial_i \partial_j f \Vert_{L^2}^{\frac{1}{4}}
			\Vert g \Vert_{L^2}^{\frac{1}{2}}
			\Vert \partial_k g \Vert_{L^2}^{\frac{1}{2}},
\end{equation}
\begin{equation}\label{a4}
			\Vert f \Vert_{L^\infty}
			\lesssim
			\Vert f \Vert_{L^2}^{\frac{1}{2}}
			\Vert \partial_3 f \Vert_{L^2}^{\frac{1}{2}}
			+
			\Vert \partial_h f \Vert_{L^2}^{\frac{1}{2}}
			\Vert \partial_h \partial_3 f \Vert_{L^2}^{\frac{1}{2}}
			+
			\Vert \partial_h^2 f \Vert_{L^2}^{\frac{1}{2}}
			\Vert \partial_h^2 \partial_3 f \Vert_{L^2}^{\frac{1}{2}},
\end{equation}
\begin{equation}\label{a5}
\|Z_3 f\|_{L^2}
\lesssim \|f\|_{L^2}
          +\|f\|_{L^2}^{\frac34}\|Z_3^3 f\|_{L^2}^{\frac14}
          +\|f\|_{L^2}^{\frac23}\|Z_3^3 f\|_{L^2}^{\frac13}.
\end{equation}
\begin{equation}\label{a9}
\left\|\|f\|_{L^\infty(\mathbb{R}_+)}\right\|_{L^{\frac2s}(\mathbb{R}^2)}
\lesssim \left(\|f\|_{L^2}\|\partial_2 f\|_{L^2}
           +\|\partial_1 f\|_{L^2}\|\partial_{12}f\|_{L^2}\right)^{\frac{1-s}{2}}
           \|f\|_{L^2}^{\frac{2s-1}{2}}
           \|\partial_3 f\|_{L^2}^{\frac12}.
\end{equation}
	\end{lemm}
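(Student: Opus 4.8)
The plan is to obtain every inequality in Lemma~\ref{lemma1} from two scalar building blocks---the one-dimensional Agmon inequality $\|g\|_{L^\infty(\mathbb{R})}^2\lesssim\|g\|_{L^2(\mathbb{R})}\|\partial g\|_{L^2(\mathbb{R})}$ and the two-dimensional Agmon inequality $\|g\|_{L^\infty(\mathbb{R}^2)}^4\lesssim\|g\|_{L^2}\|\partial_i g\|_{L^2}\|\partial_j g\|_{L^2}\|\partial_i\partial_j g\|_{L^2}$ (the latter derived from the former by integrating $g^2$ first in $x_i$ and then in $x_j$)---together with H\"older's and Minkowski's inequalities for rearranging anisotropic mixed-Lebesgue norms. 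The unifying idea is to assign to each factor one or two $L^\infty$-directions, paying one half-power of a derivative per such direction, while keeping the remaining directions in $L^2$.

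For \eqref{a1} I would split the triple product by H\"older so that $f,g,h$ are measured in $L^\infty$ in the $x_1,x_2,x_3$ variable respectively and in $L^2$ in the other two, i.e. $\int|fgh|\le\|f\|_{L^2_{x_3}L^2_{x_2}L^\infty_{x_1}}\|g\|_{L^2_{x_3}L^\infty_{x_2}L^2_{x_1}}\|h\|_{L^\infty_{x_3}L^2_{x_2}L^2_{x_1}}$; applying the one-dimensional inequality in the distinguished variable of each factor and using Minkowski to commute the $L^2$-integrations inside yields the product of half-powers---this is precisely the computation recorded in the commented display following \eqref{3124}. Inequalities \eqref{a2} and \eqref{a3} use the same template with one factor carrying two $L^\infty$-directions $x_i,x_j$: the two-dimensional Agmon inequality supplies the four quarter-powers $\|f\|^{1/4}\|\partial_i f\|^{1/4}\|\partial_j f\|^{1/4}\|\partial_i\partial_j f\|^{1/4}$, a second factor with a single $L^\infty$-direction $x_k$ supplies $\|g\|^{1/2}\|\partial_k g\|^{1/2}$, and the last factor remains in $L^2$ (for \eqref{a3} there is no third factor and one simply bounds $\|fg\|_{L^2}\le\|f\|_{L^\infty_{x_i x_j}L^2_{x_k}}\|g\|_{L^2_{x_i x_j}L^\infty_{x_k}}$).

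Estimates \eqref{a4} and \eqref{a9} are pointwise and mixed-norm bounds that I would attack by first integrating out the normal variable through the vertical Agmon inequality. For \eqref{a4} this reduces the problem to a planar supremum of $\|f\|_{L^2_{x_3}}\|\partial_3 f\|_{L^2_{x_3}}$, and a two-dimensional embedding in $x_h$ distributing zero, one, or two horizontal derivatives produces the three terms $\|\partial_h^k f\|_{L^2}^{1/2}\|\partial_h^k\partial_3 f\|_{L^2}^{1/2}$, $k=0,1,2$. For \eqref{a9} the vertical Agmon inequality gives $\|f\|_{L^\infty_{x_3}}\lesssim(\|f\|_{L^2_{x_3}}\|\partial_3 f\|_{L^2_{x_3}})^{1/2}$ as a function of $x_h$; I then take the $L^{2/s}(\mathbb{R}^2)$-norm and split by H\"older so that the $\partial_3 f$-factor lands in $L^4_{x_h}$, reconstructing $\|\partial_3 f\|_{L^2}^{1/2}$, while $\|f\|_{L^2_{x_3}}^{1/2}$ lands in $L^{4/(2s-1)}_{x_h}$. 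Interpolating $\big\|\|f\|_{L^2_{x_3}}\big\|_{L^{2/(2s-1)}(\mathbb{R}^2)}$ between $L^2$ and $L^\infty$ and bounding the $L^\infty$-part by the planar inequality $\|c\|_{L^\infty(\mathbb{R}^2)}^2\lesssim\|c\|_{L^2}\|\partial_2 c\|_{L^2}+\|\partial_1 c\|_{L^2}\|\partial_{12}c\|_{L^2}$ reproduces the exponents $\frac{1-s}{2}$, $\frac{2s-1}{2}$, $\frac12$; note that $s\in(\frac12,1)$ is precisely what makes $2/(2s-1)>2$, so the interpolation is admissible.

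The genuinely delicate inequality, and the step I expect to be the main obstacle, is the conormal interpolation \eqref{a5}, since $Z_3=\varphi\partial_3$ degenerates at the boundary ($\varphi(0)=0$) and is not translation invariant, so the scaling proof of Gagliardo--Nirenberg is unavailable. My plan is to split $\mathbb{R}_+$ into a far region $\{x_3\ge 1\}$, where $\varphi$ is bounded above and below and $Z_3$ is comparable to $\partial_3$, yielding by standard half-line interpolation both the lower-order term $\|f\|_{L^2}$ and the natural term $\|f\|_{L^2}^{2/3}\|Z_3^3 f\|_{L^2}^{1/3}$; and a near-boundary region $\{x_3<1\}$, where $\varphi(x_3)\sim x_3$ and $Z_3$ behaves like the Euler operator $x_3\partial_3$. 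Straightening $Z_3$ into $\partial_y$ by $y=\psi(x_3)$ with $\psi'=1/\varphi$ turns the near-boundary estimate into a one-dimensional weighted interpolation against the boundary-degenerate weight $\varphi\,dy$, and it is this degenerate weight that forces the non-standard term $\|f\|_{L^2}^{3/4}\|Z_3^3 f\|_{L^2}^{1/4}$. Keeping track of the commutators $[Z_3^k,\partial_3]$---whose coefficients are the bounded functions $C_{k,\alpha_3}(\varphi')$ already computed in Section~\ref{Estimateofnormalderivative}---and matching the far- and near-field contributions to the exact stated exponents is the crux of the argument.
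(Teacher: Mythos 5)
Your handling of \eqref{a1}--\eqref{a4} and \eqref{a9} is correct and essentially the paper's own route: for \eqref{a1}--\eqref{a4} the paper simply invokes Wu--Zhu's anisotropic Agmon-type lemma, whose proof is exactly your scheme of assigning $L^\infty$-directions at the cost of half-derivatives via the one-dimensional inequality plus Minkowski (and the commented display after \eqref{3124} is indeed that computation for \eqref{a1}); for \eqref{a9} your chain --- vertical Agmon in $x_3$, H\"older in $x_h$ placing the $\partial_3$-factor so that it reconstructs $\Vert \partial_3 f\Vert_{L^2}^{1/2}$, then $L^2$--$L^\infty$ interpolation of $\left\Vert\Vert f\Vert_{L^2_{x_3}}\right\Vert_{L^{2/(2s-1)}(\mathbb{R}^2)}$ with the planar Agmon bound at the $L^\infty$ endpoint --- reproduces the paper's estimates \eqref{a6}--\eqref{a7} line by line, with the same exponents $\frac{1-s}{2}$, $\frac{2s-1}{2}$, $\frac12$.

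The genuine gap is \eqref{a5}, and you flagged it yourself: your plan terminates at ``matching the far- and near-field contributions to the exact stated exponents is the crux,'' so the decisive step is never carried out. Worse, the route is unlikely to simplify matters even if completed: after straightening $Z_3=\partial_y$ near the boundary, the measure becomes $\varphi\,dy\sim e^{y}\,dy$ as $y\to-\infty$, so the weighted one-dimensional interpolation you would need is itself non-standard --- the anomalous $\Vert f\Vert_{L^2}^{3/4}\Vert Z_3^3 f\Vert_{L^2}^{1/4}$ term is precisely what has to be produced --- and the natural proof of such a weighted inequality is an integration-by-parts argument with the bounded factor $w'/w$, at which point the coordinate change has bought nothing; the far-field piece additionally requires cutoffs at $x_3=1$ whose derivative terms must be absorbed.

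The paper's actual proof is a short global bootstrap that uses exactly the two features you treated as obstacles. Since $\varphi(0)=0$ the boundary term in the integration by parts vanishes, and since $\vert\varphi'\vert\lesssim 1$ the commutator contribution is lower order: from $\partial_3(\varphi^2\partial_3 f)=\varphi' Z_3 f+Z_3^2 f$ one gets
\begin{equation*}
\Vert Z_3 f\Vert_{L^2}^2=-\int_{\mathbb{R}_+^3}\left(\varphi' Z_3 f+Z_3^2 f\right)f\,dx
\lesssim \Vert Z_3 f\Vert_{L^2}\Vert f\Vert_{L^2}+\Vert Z_3^2 f\Vert_{L^2}\Vert f\Vert_{L^2},
\end{equation*}
hence $\Vert Z_3 f\Vert_{L^2}\lesssim \Vert f\Vert_{L^2}+\Vert f\Vert_{L^2}^{1/2}\Vert Z_3^2 f\Vert_{L^2}^{1/2}$, which is \eqref{a10}. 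Applying this same bound to $Z_3 f$, substituting \eqref{a10} back in, and absorbing $\tfrac12\Vert Z_3^2 f\Vert_{L^2}$ by Young's inequality gives \eqref{a11}, namely $\Vert Z_3^2 f\Vert_{L^2}\lesssim \Vert f\Vert_{L^2}+\Vert f\Vert_{L^2}^{1/2}\Vert Z_3^3 f\Vert_{L^2}^{1/2}+\Vert f\Vert_{L^2}^{1/3}\Vert Z_3^3 f\Vert_{L^2}^{2/3}$; inserting this into \eqref{a10} yields \eqref{a5} exactly, the $3/4$--$1/4$ exponent emerging automatically from composing two square-root interpolations with an additive lower-order term rather than from any weighted Gagliardo--Nirenberg inequality. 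No region splitting, change of variables, or commutator bookkeeping beyond $\vert\varphi'\vert\lesssim1$ is needed.
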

\begin{proof}
%We first introduce the following 1D interpolation inequality:
%\begin{equation*}
%	\begin{aligned}
%		\Vert f \Vert_{L^\infty(\mathbb{R_+})}
%		\le \sqrt{2}
%		\Vert f \Vert_{L^2(\mathbb{R_+})}^{\frac{1}{2}}
%		\Vert f' \Vert_{L^2(\mathbb{R_+})}^{\frac{1}{2}},\quad \forall f\in H^1(\mathbb{R_+}).
%	\end{aligned}
%\end{equation*}
%We also introduce the Minkowski's inequality (see, e.g., \cite{Chemin2011}), for every $1\le q \le p \le \infty$,
%\begin{align*}
%	\big \Vert \Vert f \Vert_{L_y^q(\mathbb{R}^k)} \big \Vert_{L_y^p(\mathbb{R}^\ell)}
%	\le
%	\big \Vert \Vert f \Vert_{L_y^p(\mathbb{R}^\ell)} \big \Vert_{L_y^q(\mathbb{R}^k)},
%\end{align*}
%where $f=f(x,y)$ with $x \in \mathbb{R}^\ell$ and $y \in \mathbb{R}^k$ is a measurable function in $\mathbb{R}^\ell \times \mathbb{R}^k$. Therefore,
First of all, one can follow the idea in \cite[Lemma 1.2]{Wu2021Advance} to establish the inequalities \eqref{a1}-\eqref{a4} in Lemma \ref{lemma1}.
%On the other hand, obviously, we have
%\begin{equation}\label{a8}
%	\begin{aligned}
%\Vert f \Vert_{L^\infty(\mathbb{R}^3_+)}
%\lesssim&\big\|\Vert f \Vert_{L^\infty_{x_1 x_2}}\big\|_{L^2_{x_3}}^\frac{1}{2}
%  \big\|\|\partial_3 f \Vert_{L^\infty_{x_1 x_2}}\big\|_{L^2_{x_3}}^\frac{1}{2}\\
%\lesssim&\|f\|_{L^2}^{\frac18}\|\partial_1 f\|_{L^2}^{\frac18}
%         \|\partial_2 f\|_{L^2}^{\frac18}\|\partial_{12} f\|_{L^2}^{\frac18}
%         \|\partial_3 f\|_{L^2}^{\frac18}\|\partial_{13} f\|_{L^2}^{\frac18}
%         \|\partial_{23} f\|_{L^2}^{\frac18}\|\partial_{123} f\|_{L^2}^{\frac18},
%	\end{aligned}
%\end{equation}
%which implies the inequality \eqref{a4} directly.
%Finally,
Thus, let us give the proof of inequality \eqref{a5}.
Indeed, integrating by part, it holds
\begin{equation*}
\|Z_3 f\|_{L^2}^2=-\int_{\mathbb{R}_+^3}(\varphi' Z_3 f+Z_3^2 f)f dx
\lesssim \|Z_3 f\|_{L^2}\|f\|_{L^2}
         +\|Z_3^2 f\|_{L^2}\|f\|_{L^2},
\end{equation*}
which yields directly
\begin{equation}\label{a10}
\|Z_3 f\|_{L^2}
\lesssim \|f\|_{L^2}
         +\|f\|_{L^2}^{\frac12}\|Z_3^2 f\|_{L^2}^{\frac12}.
\end{equation}
Applying estimate \eqref{a10} twice, it is easy to check that
\begin{equation*}
\begin{aligned}
\|Z_3^2 f\|_{L^2}
&\lesssim \|Z_3 f\|_{L^2}
         +\|Z_3 f\|_{L^2}^{\frac12}\|Z_3^3 f\|_{L^2}^{\frac12}\\
&\lesssim \|f\|_{L^2}
         +\|f\|_{L^2}^{\frac12}\|Z_3^2 f\|_{L^2}^{\frac12}
         +\left(\|f\|_{L^2}
         +\|f\|_{L^2}^{\frac12}\|Z_3^2 f\|_{L^2}^{\frac12}\right)^{\frac12}
         \|Z_3^3 f\|_{L^2}^{\frac12}\\
&\lesssim \frac12 \|Z_3^2 f\|_{L^2}+\|f\|_{L^2}
          +\|f\|_{L^2}^{\frac12}\|Z_3^3 f\|_{L^2}^{\frac12}
          +\|f\|_{L^2}^{\frac13}\|Z_3^3 f\|_{L^2}^{\frac23},
\end{aligned}
\end{equation*}
which implies directly
\begin{equation}\label{a11}
\|Z_3^2 f\|_{L^2}
\lesssim \|f\|_{L^2}
          +\|f\|_{L^2}^{\frac12}\|Z_3^3 f\|_{L^2}^{\frac12}
          +\|f\|_{L^2}^{\frac13}\|Z_3^3 f\|_{L^2}^{\frac23}.
\end{equation}
Then, the combination of estimates \eqref{a10} and \eqref{a11} yields directly
\begin{equation*}
\begin{aligned}
\|Z_3 f\|_{L^2}
&\lesssim \|f\|_{L^2}
         +\|f\|_{L^2}^{\frac12}
         \left\{\|f\|_{L^2}
          +\|f\|_{L^2}^{\frac12}\|Z_3^3 f\|_{L^2}^{\frac12}
          +\|f\|_{L^2}^{\frac13}\|Z_3^3 f\|_{L^2}^{\frac23}\right\}^{\frac12}\\
&\lesssim \|f\|_{L^2}
          +\|f\|_{L^2}^{\frac34}\|Z_3^3 f\|_{L^2}^{\frac14}
          +\|f\|_{L^2}^{\frac23}\|Z_3^3 f\|_{L^2}^{\frac13},
\end{aligned}
\end{equation*}
which yields the estimate \eqref{a5}.
Finally, let us establish the estimate \eqref{a9}.
Indeed, it holds
\begin{equation}\label{a6}
\begin{aligned}
\left\|\|f\|_{L^\infty(\mathbb{R}_+)}\right\|_{L^{\frac2s}(\mathbb{R}^2)}^{\frac2s}
\lesssim \left\|\|f\|_{L^2(\mathbb{R}_+)}^{\frac12}
 \|\partial_3 f\|_{L^2(\mathbb{R}_+)}^{\frac12}\right\|_{L^{\frac2s}(\mathbb{R}^2)}^{\frac2s}
\lesssim \|\partial_3 f\|_{L^2}^{\frac1s}
         \left\|\|f\|_{L^2(\mathbb{R}_+)}\right\|_{L^{\frac{2}{2s-1}}
         (\mathbb{R}^2)}^{\frac1s},
\end{aligned}
\end{equation}
and
\begin{equation}\label{a7}
\begin{aligned}
 \left\|\|f\|_{L^2(\mathbb{R}_+)}\right\|_{L^{\frac{2}{2s-1}}(\mathbb{R}^2)}
 ^{\frac{2}{2s-1}}
 &\lesssim \left\|\|f\|_{L^\infty(\mathbb{R}^2)}\right\|_{L^2(\mathbb{R}_+)}
 ^{\frac{4-4s}{2s-1}}\|f\|_{L^2}^2\\
 &\lesssim \left(\|f\|_{L^2}^{\frac12}\|\partial_2 f\|_{L^2}^{\frac12}
           +\|\partial_1 f\|_{L^2}^{\frac12}\|\partial_{12}f\|_{L^2}^{\frac12}\right)
           ^{\frac{4-4s}{2s-1}}\|f\|_{L^2}^2.
\end{aligned}
\end{equation}
The combination of estimates \eqref{a6} and \eqref{a7} gives
\begin{equation*}
\begin{aligned}
&\left\|\|f\|_{L^\infty(\mathbb{R}_+)}\right\|_{L^{\frac2s}(\mathbb{R}^2)}
\lesssim \|\partial_3 f\|_{L^2}^{\frac12}
         \left\|\|f\|_{L^2(\mathbb{R}_+)}\right\|_{L^{\frac{2}{2s-1}}
         (\mathbb{R}^2)}^{\frac12}\\
&\lesssim \left(\|f\|_{L^2}\|\partial_2 f\|_{L^2}
           +\|\partial_1 f\|_{L^2}\|\partial_{12}f\|_{L^2}\right)^{\frac{1-s}{2}}
           \|f\|_{L^2}^{\frac{2s-1}{2}}
           \|\partial_3 f\|_{L^2}^{\frac12}.
\end{aligned}
\end{equation*}
Therefore, we complete the proof of this lemma.
\end{proof}

First, we will state two elementary inequalities without proof
(i.e., Lemmas \ref{p} and \ref{H-L} below), one can
refer to \cite[Lemma 11]{Masmoudi2012ARMA}
and \cite[Theorem 1]{Stein1970} respectively.

\begin{lemm}[\cite{Masmoudi2012ARMA}]\label{p}
Consider the system
\begin{equation}\label{a12}
\partial_t u-\ep \Delta u+\nabla p=F,~\nabla \cdot u=0 ~\rm{in}~\mathbb{R}^3_+,
\end{equation}	
with the Navier boundary condition
\begin{equation*}\label{a13}
u_3=0,~\partial_3 u_h=2 \alpha u_h ~{\rm on} ~~ \mathbb{R}^2 \times \{ x_3=0 \},
\end{equation*}
where $F$ is some given source term. Applying divergence operator $\nabla \cdot $ to \eqref{a12}, we have
\begin{align*}
	\Delta p = \nabla \cdot F  ~\rm{in}~\mathbb{R}^3_+.
\end{align*}
Therefore, for $m \ge 2$, it holds
	\begin{equation*}\label{a14}
			\Vert \nabla p \Vert_{H_{co}^{m-1}} \lesssim
			\Vert F \Vert_{H_{co}^{m-1}}
			+\Vert \nabla \cdot F \Vert_{H_{co}^{m-2}}
            +\ep |\alpha| \Vert \nabla u_h \Vert_{H_{co}^{m-1}}^{\frac12}
             \Vert u_h \Vert_{H_{co}^{m-1}}^{\frac12}.
	\end{equation*}
\end{lemm}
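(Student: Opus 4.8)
The plan is to recast the inequality as an elliptic (Neumann) estimate for the pressure and then to propagate conormal derivatives by induction. First I would identify the boundary condition satisfied by $p$. Taking the third component of $\eqref{a12}$ and restricting it to $x_3=0$, I use that $u_3|_{x_3=0}=0$ forces $\partial_t u_3=\partial_1^2 u_3=\partial_2^2 u_3=0$ on the boundary, so that $\Delta u_3|_{x_3=0}=\partial_3^2 u_3|_{x_3=0}$. The divergence-free condition gives $\partial_3^2 u_3=-\partial_3(\partial_1 u_1+\partial_2 u_2)$, and inserting the Navier condition $\partial_3 u_h=2\alpha u_h$ yields $\partial_3^2 u_3|_{x_3=0}=-2\alpha\,\partial_h\cdot u_h$. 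Hence $p$ solves
$$\Delta p=\nabla\cdot F \ \ \text{in}\ \mathbb{R}^3_+,\qquad \partial_3 p\big|_{x_3=0}=F_3-2\alpha\ep\,\partial_h\cdot u_h,$$
so the whole problem is reduced to an elliptic estimate with interior source $\nabla\cdot F$ and Neumann data controlled by $F_3$ and the boundary trace of $\alpha\ep\,\partial_h\cdot u_h$.

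Second, I would establish the estimate by induction on the number of conormal derivatives, beginning with the $L^2$ bound for $\nabla p$. For the base case I test $\Delta p=\nabla\cdot F$ against $p$ and integrate by parts; the interior contributes $\|\nabla p\|_{L^2}^2$ against $\int F\cdot\nabla p$, while the Neumann data produce a boundary term which, after a tangential integration by parts on $\{x_3=0\}$ and the trace inequality $\|w\|_{L^2(\partial\mathbb{R}^3_+)}^2\lesssim\|w\|_{L^2}\|\partial_3 w\|_{L^2}$, is absorbed into $\|F\|_{L^2}+\ep|\alpha|\,\|\nabla u_h\|_{L^2}^{1/2}\|u_h\|_{L^2}^{1/2}$. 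This already exhibits at order zero the three-term structure of the claimed bound.

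For the inductive step I would apply $Z^\alpha$ with $|\alpha|\le m-1$. Purely tangential fields $\partial_h$ commute with $\Delta$ and preserve the boundary, so those derivatives are harmless and land directly on $F$ and $\nabla\cdot F$. The normal conormal field $Z_3=\varphi\partial_3$ is the delicate one: it does not commute with $\Delta$, but since $\varphi(0)=0$ the commutators $[\Delta,Z_3^k]$ are lower order \emph{in the conormal sense} and the boundary contributions generated upon integrating by parts carry the vanishing weight $\varphi$ and are therefore controllable. This is exactly where the shift from $\|\nabla p\|_{H^{m-1}_{co}}$ on the left to $\|F\|_{H^{m-1}_{co}}$ together with the lower-order norm $\|\nabla\cdot F\|_{H^{m-2}_{co}}$ on the right originates, the extra divergence gaining one conormal order. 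The boundary forcing at each order contributes $\ep|\alpha|\,\|\nabla u_h\|_{H^{m-1}_{co}}^{1/2}\|u_h\|_{H^{m-1}_{co}}^{1/2}$ through the same trace interpolation applied to the conormally differentiated traces.

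\textbf{The main obstacle} will be closing the elliptic estimate within the conormal framework: one must carefully absorb the copies of $\nabla p$ produced by the commutators $[\Delta,Z^\alpha]$ into the left-hand side, while simultaneously keeping every boundary term under control via the vanishing weight $\varphi$ and the trace inequality. Tracking these commutators and showing that the only genuinely new contribution is the $\ep|\alpha|$ boundary term—rather than an uncontrolled $\ep$-dependent interior term—is the technical heart of the argument.
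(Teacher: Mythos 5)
First, a point of reference: the paper does not prove this lemma at all — it is quoted from \cite[Lemma 11]{Masmoudi2012ARMA} and used only in the case $\alpha=0$, where the boundary term drops out (estimate \eqref{a15}). So your sketch has to be judged against the actual argument in that reference. Your first step is correct and is exactly the standard reduction: $u_3|_{x_3=0}=0$ kills $\partial_t u_3$ and $\Delta_h u_3$ on the boundary, the divergence-free condition gives $\partial_3^2u_3=-\partial_h\cdot(\partial_3u_h)$, and the Navier condition converts this into tangential data, so that $p$ solves the Neumann problem $\Delta p=\nabla\cdot F$, $\partial_3 p|_{x_3=0}=F_3-2\alpha\ep\,\partial_h\cdot u_h$. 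After that, however, there are two genuine gaps.

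The base case does not close as you describe it. Testing $\Delta p=\nabla\cdot F$ against $p$ and integrating by parts tangentially, the boundary contribution is $-2\alpha\ep\int_{x_3=0}u_h\cdot\partial_h p\,dx_h$. To estimate it you must control a trace of $p$: the inequality $|w|_{L^2(\partial)}^2\lesssim\|w\|_{L^2}\|\partial_3w\|_{L^2}$ applied to $\partial_h p$ produces $\|\partial_3\partial_h p\|_{L^2}$, a second derivative of $p$ that cannot be absorbed into $\|\nabla p\|_{L^2}^2$ on the left; avoiding the tangential integration by parts instead requires the trace of $p$ itself, which is likewise not controlled by $\|\nabla p\|_{L^2}$. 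The correct mechanism — and the real content of the cited lemma — is to measure the Neumann data in $H^{-1/2}(\mathbb{R}^2)$, either by duality or from the explicit half-space solution $p=-e^{-x_3|D_h|}|D_h|^{-1}g$, which gives $\|\nabla p\|_{L^2}\approx|g|_{H^{-1/2}(\partial)}$ for the harmonic part and hence $\lesssim\ep|\alpha|\,|u_h|_{H^{1/2}(\partial)}$; the $H^{1/2}$ boundary norm (not the $L^2$ trace) is then bounded through $|f|_{\dot H^{1/2}(\partial)}^2\lesssim\|\partial_h f\|_{L^2}\|\partial_3 f\|_{L^2}$, and analogously at order $m-1$. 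Your $L^2(\partial)$ trace inequality is exactly half a derivative short of what the boundary term demands, which is why the interpolated product of two different norms appears in the statement.

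The commutator claim is also false as stated: $[\Delta,Z_3]$ is \emph{not} lower order in the conormal sense. One has $[\partial_3^2,Z_3]f=2\varphi'\partial_3^2f+\varphi''\partial_3f$ with $\varphi'(0)\neq0$, so the commutator contains a full second normal derivative of $p$ with a non-degenerate coefficient; no vanishing weight $\varphi$ sits in front of it, and conormal norms of $\nabla p$ do not control $\partial_3^2p$. The only way to handle these terms — and this is where $\Vert\nabla\cdot F\Vert_{H^{m-2}_{co}}$ genuinely enters the estimate — is to use the equation itself, $\partial_3^2p=\nabla\cdot F-\Delta_h p$, to trade two normal derivatives for tangential derivatives of $\nabla p$ (already controlled) plus $\nabla\cdot F$ at one lower conormal order, and to iterate this over the $Z_3$ derivatives. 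Without that step your induction cannot close, so the proposal as written does not yield the lemma.
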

In this paper, we only consider the boundary condition \eqref{eq46}
for the case of $\alpha=0$. Thus, under the assumption of boundary condition
\eqref{eq46}, we have the estimate of pressure as follows
	\begin{equation}\label{a15}
			\Vert \nabla p \Vert_{H_{co}^{m-1}} \lesssim
			\Vert F \Vert_{H_{co}^{m-1}}
			+\Vert \nabla \cdot F \Vert_{H_{co}^{m-2}}.
	\end{equation}
Next, we introduce the Hardy-Littlewood-Sobolev inequality
in \cite[pp. 119, Theorem 1]{Stein1970} as follow.
\begin{lemm}\label{H-L}
Let $0<\alpha<2 , 1<p<q<\infty, \frac{1}{q}+\frac{\alpha}{2}=\frac{1}{p}$, then
\begin{equation}\label{a16}
\|\Lambda^{-\alpha}f\|_{L^q}\lesssim \|f\|_{L^p}.
\end{equation}
\end{lemm}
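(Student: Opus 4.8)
The plan is to identify $\Lambda^{-\alpha}$ with the Riesz potential and then run the classical Hardy--Littlewood maximal function argument. Recall that on $\mathbb{R}^n$ the Fourier multiplier with symbol $|\xi|^{-\alpha}$ coincides, up to a dimensional constant $c_{n,\alpha}$, with convolution against the kernel $|x|^{-(n-\alpha)}$, so that
\begin{equation*}
\Lambda^{-\alpha}f(x)=c_{n,\alpha}\int_{\mathbb{R}^n}\frac{f(y)}{|x-y|^{n-\alpha}}\,dy,
\end{equation*}
and in the lemma $n=2$, for which the hypothesis $\tfrac1q+\tfrac{\alpha}{2}=\tfrac1p$ is exactly the scaling relation $\tfrac1q=\tfrac1p-\tfrac{\alpha}{n}$. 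First I would reduce everything to a pointwise bound: for each parameter $R>0$ I split the defining integral into the near region $|x-y|<R$ and the far region $|x-y|\ge R$, and estimate the two pieces separately.

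For the near region, a dyadic decomposition over the annuli $2^{-k-1}R\le|x-y|<2^{-k}R$ together with the definition of the Hardy--Littlewood maximal function $Mf$ yields $\int_{|x-y|<R}|x-y|^{-(n-\alpha)}|f(y)|\,dy\lesssim R^{\alpha}Mf(x)$, where summing the geometric series in $2^{-k\alpha}$ uses $\alpha>0$. For the far region I would apply H\"older's inequality with exponents $p$ and $p'$; the tail factor $\big(\int_{|x-y|\ge R}|x-y|^{-(n-\alpha)p'}\,dy\big)^{1/p'}$ is finite precisely when $(n-\alpha)p'>n$, which is equivalent to $\alpha<n/p$, i.e.\ to $q<\infty$. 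This gives $\int_{|x-y|\ge R}|x-y|^{-(n-\alpha)}|f(y)|\,dy\lesssim R^{\alpha-n/p}\|f\|_{L^p}$.

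Combining the two pieces produces $|\Lambda^{-\alpha}f(x)|\lesssim R^{\alpha}Mf(x)+R^{\alpha-n/p}\|f\|_{L^p}$ for every $R>0$; optimizing by choosing $R$ so that the two terms balance, namely $R^{n/p}\sim\|f\|_{L^p}/Mf(x)$, yields the pointwise interpolation estimate
\begin{equation*}
|\Lambda^{-\alpha}f(x)|\lesssim \big(Mf(x)\big)^{1-\alpha p/n}\|f\|_{L^p}^{\alpha p/n}=\big(Mf(x)\big)^{p/q}\|f\|_{L^p}^{1-p/q}.
\end{equation*}
Raising this to the power $q$, integrating in $x$, and invoking the Hardy--Littlewood maximal theorem $\|Mf\|_{L^p}\lesssim\|f\|_{L^p}$ (valid since $p>1$) then gives $\|\Lambda^{-\alpha}f\|_{L^q}^q\lesssim\|f\|_{L^p}^{q-p}\|Mf\|_{L^p}^{p}\lesssim\|f\|_{L^p}^{q}$, which is the desired inequality \eqref{a16}.

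The main obstacle is the endpoint nature of the estimate: the maximal operator is unbounded on $L^1$, so the hypothesis $p>1$ is indispensable, and the far-field H\"older step genuinely requires $q<\infty$ (equivalently $\alpha<n/p$) for convergence of the kernel tail. Since the statement is the classical Hardy--Littlewood--Sobolev inequality, an even shorter alternative would be to note that the kernel $|x|^{-(n-\alpha)}$ belongs to the weak Lebesgue space $L^{n/(n-\alpha),\infty}$ and to apply the weak Young (Lorentz-space convolution) inequality with $\tfrac1q+1=\tfrac1p+\tfrac{n-\alpha}{n}$; I would present the self-contained maximal-function proof above, which is exactly the route in \cite[pp.~119]{Stein1970}.
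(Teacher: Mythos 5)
Your proposal is correct, and there is essentially nothing to compare it against: the paper states this lemma \emph{without proof}, simply citing the classical Hardy--Littlewood--Sobolev inequality in \cite[Theorem 1, pp.~119]{Stein1970}. Your argument — identifying $\Lambda^{-\alpha}$ with the Riesz potential on $\mathbb{R}^2$ (valid since $0<\alpha<2$), splitting the kernel at radius $R$, bounding the near part by $R^{\alpha}Mf(x)$ via dyadic annuli and the far part by $R^{\alpha-n/p}\|f\|_{L^p}$ via H\"older (convergent precisely because $q<\infty$), optimizing in $R$ to get the pointwise bound $|\Lambda^{-\alpha}f|\lesssim (Mf)^{p/q}\|f\|_{L^p}^{1-p/q}$, and finishing with the maximal theorem for $p>1$ — is exactly the standard proof in the cited reference and is carried out correctly, with the exponent bookkeeping $1-\alpha p/n=p/q$ checking out.
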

In this paper, taking $q=2$ in \eqref{a16}, then
$
p=\frac{1}{\frac12+\frac{\alpha}{2}}=\frac{2}{1+\alpha}
$
should satisfy the condition
$$
1<p=\frac{2}{1+\alpha} <2.
$$
This implies the index $\alpha \in (0, 1)$.

\section{Proof of some claimed estimate}\label{claim-estimates}

In this section, we will give proof for the claimed estimates
\eqref{equivalent-norm} and \eqref{532}.

\begin{proof}[\textbf{\underline{Proof of equivalent norm \eqref{equivalent-norm}}}]
%for any suitable function $g$, one may apply
%the definition of $\|\cdot \|_{H^s_{co}}$ to obtain the fact
%$$\|g\|_{H^m_{co}}+\|\partial_3 g\|_{H^{m-1}_{co}}
%\thicksim \|g\|_{H^{m}_{tan}}+\|\partial_3 g\|_{H^{m-1}_{co}},$$
%which implies directly
%\begin{equation}
%\|u^\ep\|_{H^m_{co}}+\|\partial_3 u^\ep\|_{H^{m-1}_{co}}
%\thicksim
%\|u^\ep\|_{H^{m}_{tan}}+\|\partial_3 u^\ep\|_{H^{m-1}_{co}}.
%\end{equation}
%Furthermore,
Due to the divergence-free condition and definition of vorticity, we have
\begin{equation*}
\begin{aligned}
\partial_3 u^\ep_1
&=\partial_3 u^\ep_1-\partial_1 u^\ep_3+\partial_1 u^\ep_3
=\omega_1^{u^\ep}+\partial_1 u^\ep_3,\\
\partial_3 u^\ep_2
&=\partial_3 u^\ep_2-\partial_2 u^\ep_3+\partial_2 u^\ep_3
=-\omega_2^{u^\ep}+\partial_2 u^\ep_3,\\
\partial_3 u^\ep_3
&=-\partial_1 u^\ep_1-\partial_2 u^\ep_2.
\end{aligned}
\end{equation*}
Then, it is easy to check that
\begin{equation}\label{b2}
\|u^\ep\|_{H^{m}_{co}}^2+\|\partial_3 u^\ep\|_{H^{m-1}_{co}}^2
\thicksim \|u^\ep\|_{H^{m}_{co}}^2+\|\omega_h^{u^\ep}\|_{H^{m-1}_{co}}^2.
\end{equation}
On the other hand, it is easy to check that
\begin{equation}\label{b3}
\|u^\ep \|_{H^m_{tan}}^2+\|\omega^{u^\ep}_h\|_{H^{m-1}_{co}}^2
\thicksim \|u^\ep \|_{H^m_{co}}^2+\|\omega^{u^\ep}_h\|_{H^{m-1}_{co}}^2.
\end{equation}
Obviously, in order to establish the relation \eqref{b3},
it only needs to establish the relation
\begin{equation}\label{b4}
\|u^\ep \|_{H^m_{co}}^2
\lesssim \|u^\ep \|_{H^m_{tan}}^2+\|\omega^{u^\ep}_h\|_{H^{m-1}_{co}}^2.
\end{equation}
Indeed, let us recall the definition of norm
$\|u^\ep \|_{H^m_{co}}^2{=}\sum_{0\le |\alpha|\le m}
\|Z^{\alpha_1}_1 Z^{\alpha_2}_2 Z^{\alpha_3}_3 u^\ep \|_{L^2}^2.
$
We will give the proof of estimate \eqref{b4} by induction.
Case $\alpha_3=0$: the norm relation \eqref{b4} obviously holds.
Case $\alpha_3=1$: one applies the divergence-free condition to obtain
\begin{equation}
\begin{aligned}
&~~~~\sum_{0\le |\alpha_1+\alpha_2|\le m-1}
\|Z^{\alpha_1}_1 Z^{\alpha_2}_2 Z_3 u^\ep \|_{L^2}^2\\
&\le \sum_{0\le |\alpha_1+\alpha_2|\le m-1}
(\|Z^{\alpha_1}_1 Z^{\alpha_2}_2 Z_3 u^\ep_h \|_{L^2}^2
 +\|Z^{\alpha_1}_1 Z^{\alpha_2}_2 \varphi (\partial_h \cdot u^\ep_h)\|_{L^2}^2)\\
&\le \sum_{0\le |\alpha_1+\alpha_2|\le m-1}
(\|Z^{\alpha_1}_1 Z^{\alpha_2}_2 \omega^{u^\ep}_h\|_{L^2}^2
+\|Z^{\alpha_1}_1 Z^{\alpha_2}_2 \partial_h u^\ep_3\|_{L^2}^2
+\|Z^{\alpha_1}_1 Z^{\alpha_2}_2 \varphi (\partial_h \cdot u^\ep_h)\|_{L^2}^2)\\
&\le  \|\omega^{u^\ep}_h\|_{H^{m-1}_{co}}^2
      +\|u^\ep \|_{H^{m}_{tan}}^2.
\end{aligned}
\end{equation}
Now, let us suppose $\alpha_3=k\ge 1$. It holds
\begin{equation}\label{control-assumption}
\sum_{0\le |\alpha_1+\alpha_2|\le m-k}
\|Z^{\alpha_1}_1 Z^{\alpha_2}_2 Z_3^k u^\ep \|_{L^2}^2
\le  \|\omega^{u^\ep}_h\|_{H^{m-1}_{co}}^2
      +\|u^\ep\|_{H^{m}_{tan}}^2.
\end{equation}
Let us give the proof for $\alpha_3=k+1$. It is easy to check that
\begin{equation*}
\begin{aligned}
&\sum_{0\le |\alpha_1+\alpha_2|\le m-(k+1)}
\|Z^{\alpha_1}_1 Z^{\alpha_2}_2 Z_3^{k+1} u^\ep\|_{L^2}^2\\
\le &\sum_{0\le |\alpha_1+\alpha_2|\le m-(k+1)}
(\|Z^{\alpha_1}_1 Z^{\alpha_2}_2 Z_3^{k} (\varphi \partial_3 u^\ep_h)\|_{L^2}^2
+\|Z^{\alpha_1}_1 Z^{\alpha_2}_2 Z_3^{k} (\varphi \partial_h \cdot u^\ep_h)\|_{L^2}^2)\\
\le &\sum_{0\le |\alpha_1+\alpha_2|\le m-(k+1)}
(\|Z^{\alpha_1}_1 Z^{\alpha_2}_2 Z_3^{k} (\varphi \omega^{u^\ep}_h)\|_{L^2}^2
+\| Z^{\alpha_1}_1 Z^{\alpha_2}_2 Z_3^{k} (\varphi \partial_h u^\ep_3)\|_{L^2}^2
+\| Z^{\alpha_1}_1 Z^{\alpha_2}_2 Z_3^{k} (\varphi \partial_h \cdot u^\ep_h)\|_{L^2}^2)\\
\lesssim & \|\omega^{u^\ep}_h\|_{H^{m-1}_{co}}^2
+\|u^\ep\|_{H^{m}_{tan}}^2,
\end{aligned}
\end{equation*}
where we have used the assumption \eqref{control-assumption} in the last inequality.
Then, with the help of induction, we complete the proof of estimate \eqref{b4}.
Then, the combination of equivalent estimates \eqref{b2} and \eqref{b3} yields directly
\begin{equation}\label{b5}
\|u^\ep\|_{H^{m}_{co}}^2+\|\partial_3 u^\ep\|_{H^{m-1}_{co}}^2
\thicksim \|u^\ep\|_{H^{m}_{co}}^2+\|\omega_h^{u^\ep}\|_{H^{m-1}_{co}}^2
\thicksim \|u^\ep \|_{H^m_{tan}}^2+\|\omega^{u^\ep}_h\|_{H^{m-1}_{co}}^2.
\end{equation}
Similar to the estimate \eqref{b5}, it is easy to check that
\begin{equation}\label{b6}
\begin{aligned}
&\|B^\ep\|_{H^{m}_{co}}^2+\|\partial_3 B^\ep\|_{H^{m-1}_{co}}^2
\thicksim \|B^\ep\|_{H^{m}_{co}}^2+\|\omega_h^{B^\ep}\|_{H^{m-1}_{co}}^2
\thicksim \|B^\ep \|_{H^m_{tan}}^2+\|\omega^{B^\ep}_h\|_{H^{m-1}_{co}}^2,\\
&\|\partial_h u^\ep\|_{H^{m}_{co}}^2
+\|\partial_3 \partial_h u^\ep\|_{H^{m-1}_{co}}^2
\thicksim \|\partial_h u^\ep \|_{H^{m}_{co}}^2
+\|\partial_h \omega_h^{u^\ep}\|_{H^{m-1}_{co}}^2
\thicksim \|\partial_h u^\ep\|_{H^m_{tan}}^2
+\|\partial_h \omega_h^{u^\ep}\|_{H^{m-1}_{co}}^2,\\
&\|\partial_3 B^\ep\|_{H^{m}_{co}}^2
+\|\partial_3^2 B^\ep\|_{H^{m-1}_{co}}^2
\thicksim \|\partial_3 B^\ep\|_{H^{m}_{co}}^2
+\|\partial_3 \omega_h^{B^\ep}\|_{H^{m-1}_{co}}^2
\thicksim \|\partial_3 B^\ep \|_{H^m_{tan}}^2
+\|\partial_3 \omega^{B^\ep}_h\|_{H^{m-1}_{co}}^2,\\
&\|\partial_3 u^\ep\|_{H^{m}_{co}}^2
+\|\partial_3^2 u^\ep\|_{H^{m-1}_{co}}^2
\thicksim \|\partial_3 u^\ep\|_{H^{m}_{co}}^2
+\|\partial_3 \omega_h^{u^\ep}\|_{H^{m-1}_{co}}^2
\thicksim \|\partial_3 u^\ep \|_{H^m_{tan}}^2
+\|\partial_3 \omega^{u^\ep}_h\|_{H^{m-1}_{co}}^2,\\
&\|\partial_h B^\ep\|_{H^{m}_{co}}^2
+\|\partial_3 \partial_h B^\ep\|_{H^{m-1}_{co}}^2
\thicksim \|\partial_h B^\ep\|_{H^{m}_{co}}^2
+\|\partial_h \omega_h^{B^\ep}\|_{H^{m-1}_{co}}^2
\thicksim \|\partial_h B^\ep \|_{H^m_{tan}}^2
+\|\partial_h \omega^{B^\ep}_h\|_{H^{m-1}_{co}}^2.
\end{aligned}
\end{equation}
Then, the combination of estimates \eqref{b5} and \eqref{b6} yield directly
\begin{equation}\label{b7}
E(t) \thicksim  X(t).
\end{equation}
Therefore, we complete the proof of claimed estimate \eqref{equivalent-norm}.
\end{proof}

\begin{proof}[\textbf{\underline{Proof of \eqref{532}}}]
Now, let us give the proof of claimed inequality \eqref{532} as follows
\begin{equation}\label{eq999}
\frac{d}{dt}(\|u\|_{H_{tan}^m}^2+\|\omega_h^u\|_{H_{tan}^{m-1}}^2)
+(\|\partial_h u\|_{H_{tan}^m}^2+\|\partial_h  \omega_h^u\|_{H_{tan}^{m-1}}^2)
+\varepsilon(\|\partial_3 u\|_{H_{tan}^m}^2+\|\partial_3 \omega_h^u\|_{H_{tan}^{m-1}}^2)
\le 0.
\end{equation}
Indeed, from the Eq.\eqref{3124}, \eqref{eqE1} and estimate \eqref{3117}, we have
\begin{equation}\label{Hmu}
\frac{1}{2}\frac{d}{dt}  \Vert u\Vert_{H^m_{tan}}^2
+\Vert \partial_h u \Vert_{H^m_{tan}}^2
+ \varepsilon \Vert \partial_3 u \Vert_{H^m_{tan}}^2
\lesssim
 ( \Vert u \Vert_{H_{tan}^m}+\Vert \omega_h^u \Vert_{H_{tan}^{m-1}} )
 (\Vert \partial_h u \Vert_{H_{tan}^m}^2
  +\Vert \partial_h \omega_h^u \Vert^{2}_{H_{tan}^{m-1}}).
\end{equation}
Furthermore, from the Eq.\eqref{eq2}, we have
\begin{equation}\label{b1}
\partial_t \omega_h^u + u \cdot \nabla \omega_h^u -\Delta_h \omega_h^u -\varepsilon \partial_3^2 \omega_h^u
= \omega^u \cdot \nabla u_h.
\end{equation}
The $L^2$-energy estimate of \eqref{b1} gives
\begin{equation}\label{L2omegau}
\frac{1}{2}\frac{d}{dt}\| \omega_h^u\|^2_{L^2}
		+ \|\partial_h w_h^u\|^2_{L^2}
		+ \varepsilon \|\partial_3 w_h^u\|^2_{L^2}
=
\int_{\mathbb{R}^3_+}
\omega^u \cdot \nabla u_h \cdot \omega_h^u dx
.
\end{equation}
Lemma \ref{lemma1} yields that
\begin{equation*}
	\begin{aligned}
&~~~~\int_{\mathbb{R}^3_+}
\omega^u \cdot \nabla u_h \cdot \omega_h^u dx
=
\int_{\mathbb{R}^3_+}
\omega^u_h \cdot \partial_h u_h \cdot \omega_h^u dx
+
\int_{\mathbb{R}^3_+}
\omega^u_3 \partial_3 u_h \cdot \omega_h^u dx
\\
&\lesssim
\Vert \omega^u_h \Vert_{L^2}^\frac{1}{2}
\Vert \partial_1 \omega^u_h \Vert_{L^2}^\frac{1}{2}
\Vert \partial_h u_h \Vert_{L^2}^\frac{1}{2}
\Vert \partial_3 \partial_h u_h \Vert_{L^2}^\frac{1}{2}
\Vert \omega_h^u \Vert_{L^2}^\frac{1}{2}
\Vert \partial_2 \omega_h^u \Vert_{L^2}^\frac{1}{2}
\\
&~~~~+
\Vert \omega^u_3 \Vert_{L^2}^\frac{1}{2}
\Vert \partial_3 \omega^u_3 \Vert_{L^2}^\frac{1}{2}
\Vert \partial_3 u_h \Vert_{L^2}^\frac{1}{2}
\Vert \partial_1 \partial_3 u_h \Vert_{L^2}^\frac{1}{2}
\Vert \omega_h^u \Vert_{L^2}^\frac{1}{2}
\Vert \partial_2 \omega_h^u \Vert_{L^2}^\frac{1}{2}
\\
&\lesssim
\Vert \omega^u_h \Vert_{L^2}
\Vert \partial_h \omega^u_h \Vert_{L^2}
\Vert \partial_h u_h \Vert_{L^2}^\frac{1}{2}
(
\Vert \partial_h \partial_h u_3 \Vert_{L^2}
+
\Vert \partial_h \omega_h^u \Vert_{L^2}
)
^\frac12
\\
&~~~~
+
\Vert \partial_h u \Vert_{L^2}^\frac{1}{2}
\Vert \partial_h \omega_h^u \Vert_{L^2}^\frac{1}{2}
(
\Vert \omega_h^u \Vert_{L^2}
+
\Vert \partial_h u \Vert_{L^2}
)^\frac{1}{2}
(
\Vert \partial_h \omega_h^u \Vert_{L^2}
+
\Vert \partial_h \partial_h u \Vert_{L^2}
)^\frac{1}{2}
\Vert \omega_h^u \Vert_{L^2}^\frac{1}{2}
\Vert \partial_h \omega_h^u \Vert_{L^2}^\frac{1}{2}
\\
&\lesssim
(
\Vert u \Vert_{H_{tan}^1}
+
\Vert \omega_h^u \Vert_{L^2}
)
(
\Vert \partial_h u \Vert_{H_{tan}^1}^2
+
\Vert \partial_h \omega_h^u \Vert_{L^2}^2
),
	\end{aligned}
\end{equation*}
where we have used the basic fact $\omega_3^u=\partial_1 u_2 - \partial_2 u_1$ and $\nabla \cdot \omega^u=0$.
Then, substituting this inequality into \eqref{L2omegau}, we have
\begin{equation}\label{L2}
	\frac{1}{2}\frac{d}{dt}\| \omega_h^u\|^2_{L^2}
	+ \|\partial_h w_h^u\|^2_{L^2}
	+ \varepsilon \|\partial_3 w_h^u\|^2_{L^2}
	\lesssim
	(
	\Vert u \Vert_{H_{tan}^1}
	+
	\Vert \omega_h^u \Vert_{L^2}
	)
	(
	\Vert \partial_h u \Vert_{H_{tan}^1}^2
	+
	\Vert \partial_h \omega_h^u \Vert_{L^2}^2
	)
	.
\end{equation}
The Eq.\eqref{b1} yields directly
\begin{equation}\label{omegau}
\begin{aligned}
&~~~~\frac{1}{2}\frac{d}{dt}\|\partial_h^{m-1} \omega_h^u\|^2_{L^2}
		+ \| \partial_h^{m-1} \partial_h w_h^u\|^2_{L^2}
		+ \varepsilon \| \partial_h^{m-1} \partial_3 w_h^u\|^2_{L^2}\\
&=- \int_{\mathbb{R}_+^3}
\partial_h^{m-1}(u\cdot\nabla \omega_h^u) \cdot \partial_h^{m-1} \omega_h^u dx
+
\int_{\mathbb{R}_+^3}\partial_h^{m-1}(\omega^u \cdot \nabla u_h)
 \cdot \partial_h^{m-1} \omega_h^u dx
\overset{def}{=} J_{4}+J_{5}.
\end{aligned}
\end{equation}
Using Lemma \ref{lemma1} repeatedly, we have
\begin{equation}\label{m1}
J_4\lesssim
(
\Vert u \Vert_{H^m_{tan}}
+
\Vert \omega_h^u \Vert_{H^{m-1}_{co}})
(
\Vert \partial_h u \Vert_{H^{m}_{tan}}^2
+
\Vert \partial_h \omega_h^u \Vert_{H^{m-1}_{tan}}^2),
\end{equation}
and
\begin{equation}\label{m2}
	\begin{aligned}
J_5
&\lesssim
(
\Vert u \Vert_{H^m_{tan}}
+
\Vert \omega_h^u \Vert_{H^{m-1}_{tan}}
)
(
\Vert \partial_h u \Vert_{H^m_{tan}}^2
+
\Vert \partial_h \omega_h^u \Vert_{H^{m-1}_{tan}}^2
).
	\end{aligned}
\end{equation}
Substituting \eqref{m1} and \eqref{m2} into \eqref{omegau}, we obtain
\begin{equation}\label{Hm-1omegau}
	\begin{aligned}
		&~~~~\frac{1}{2}\frac{d}{dt}\|\partial_h^{m-1} \omega_h^u\|^2_{L^2}
		+ \| \partial_h^{m-1} \partial_h w_h^u\|^2_{L^2}
		+ \varepsilon \| \partial_h^{m-1} \partial_3 w_h^u\|^2_{L^2}\\
		&\lesssim
		(
		\Vert u \Vert_{H^m_{tan}}
		+
		\Vert \omega_h^u \Vert_{H^{m-1}_{co}})
		(
		\Vert \partial_h u \Vert_{H^{m}_{tan}}^2
		+
		\Vert \partial_h \omega_h^u \Vert_{H^{m-1}_{tan}}^2).
	\end{aligned}
\end{equation}
The combination of \eqref{L2} and \eqref{Hm-1omegau} gives
\begin{equation}\label{Hm-1}
	\begin{aligned}
		&~~~~\frac{1}{2}\frac{d}{dt}\|\omega_h^u\|^2_{H^{m-1}_{tan}}
		+ \|\partial_h  w_h^u\|^2_{H^{m-1}_{tan}}
		+ \varepsilon \|\partial_3  w_h^u\|^2_{H^{m-1}_{tan}}\\
		&\lesssim
		(
		\Vert u \Vert_{H^m_{tan}}
		+
		\Vert \omega_h^u \Vert_{H^{m-1}_{co}})
		(
		\Vert \partial_h u \Vert_{H^{m}_{tan}}^2
		+
		\Vert \partial_h \omega_h^u \Vert_{H^{m-1}_{tan}}^2).
	\end{aligned}
\end{equation}
By \eqref{ns-estimate}, we combine \eqref{Hmu} and \eqref{Hm-1} and choose $\delta_1$ small enough to arrive \eqref{eq999}.
\end{proof}

\end{appendices}

		\phantomsection
		\addcontentsline{toc}{section}{\refname}
		\footnotesize
		\addtolength{\itemsep}{-1.0em}
		\setlength{\itemsep}{-4pt}
		\bibliographystyle{abbrv}
		\bibliography{Reference}
		
\end{document}